\documentclass[reqno]{amsart}

% packages
\usepackage[utf8]{inputenc} %test
\usepackage{amssymb}
\usepackage{graphicx}
\usepackage{latexsym}
\usepackage{stmaryrd}
\usepackage{enumitem}
\usepackage[bookmarks,hyperfootnotes=false]{hyperref}
\usepackage{multirow}
\usepackage{caption}
\usepackage{xcolor}
\colorlet{darkishRed}{red!80!black}
\colorlet{darkishBlue}{blue!60!black}
\colorlet{darkishGreen}{green!60!black}
\usepackage{hyperref}
\hypersetup{
    draft = false,
    bookmarksopen=true,
    colorlinks,
    linkcolor={red!60!black},
    citecolor={green!60!black},
    urlcolor={blue!60!black}
}

\usepackage{tikz}
\usepackage{tikz-cd}
\usetikzlibrary{calc,through,intersections,arrows, trees, positioning, decorations.pathmorphing, cd}
\usepackage{relsize}
\usepackage{comment}
\usepackage{xifthen}
\usepackage{mathrsfs}
\usepackage{svg}
\usepackage{mathtools}
\usepackage{nccmath}
\usepackage{pifont}

%
%

%macro stuff

%%% Bei Änderung von VG auch manuelle Änderung in 1pt Ocomp Beweis nötig!!!

%%%-------------------------------------------------------

\newcommand{\Abs}[1]{\partial_{\Omega} {#1}}
\newcommand{\AbsC}[1]{\partial_{\Gamma} {#1}}
\newcommand{\crit}{\normalfont\text{crit}}

\newcommand{\CX}{\breve{\cC}_X}

\newcommand{\CC}[1]{\breve{\cC}_{#1}}
\newcommand{\invLim}{\varprojlim}
\newcommand{\cb}{\mathfrak{c}}

\newcommand{\rest}{\upharpoonright}

\newcommand{\rsep}[2]{({#1},{#2})}
\newcommand{\lsep}[2]{({#1},{#2})}
\newcommand{\sep}[2]{\{{#1},{#2}\}}

\newcommand{\crsep}[1]{\rsep{#1}{\CC{#1}}}

\renewcommand{\subset}{\subseteq}
\renewcommand{\supset}{\supseteq}

\newcommand\xqed[1]{%
  \leavevmode\unskip\penalty9999 \hbox{}\nobreak\hfill
  \quad\hbox{#1}}

\def\comp{com\-pac\-ti\-fi\-ca\-tion}

\def\SC{Stone-Čech}

\def\TFAD{Let $G$ be any connected graph and let $U\subset V(G)$ be any vertex set. Then the following assertions are complementary:}
\def\tfad{the following assertions are complementary:}
\newcommand{\at}{attached to }

%Max Macro
%---NEW COMMANDS---

\newcommand{ \N } { \mathbb{N} }

\newcommand{\dblue}[1]{\textcolor{darkishBlue}{#1}}
\newcommand{\dc}[1]{\lceil #1\rceil}
\newcommand{\uc}[1]{\lfloor #1\rfloor}
\newcommand{\guc}[1]{\lfloor\mkern-1.4\thinmuskip\lfloor #1\rfloor\mkern-1.4\thinmuskip\rfloor}
\newcommand{\nt}{T_{\textup{\textsc{nt}}}}

%---NEW COMMANDS

%---END NEW COMMANDS---

\makeatletter

\def\calCommandfactory#1{%
   \expandafter\def\csname c#1\endcsname{\mathcal{#1}}}
\def\frakCommandfactory#1{%
   \expandafter\def\csname frak#1\endcsname{\mathfrak{#1}}}

% \fraka für \mathfrak{a}, \cC für \mathcal{C} etc
\newcounter{ctr}
\loop
  \stepcounter{ctr}
  \edef\X{\@Alph\c@ctr}
  \expandafter\calCommandfactory\X
  \expandafter\frakCommandfactory\X
  \edef\Y{\@alph\c@ctr}
  \expandafter\frakCommandfactory\Y
\ifnum\thectr<26
\repeat

\renewcommand{\cC}{\mathscr{C}}
\renewcommand{\cD}{\mathscr{D}}
\renewcommand{\cK}{\mathscr{K}}

% enumerate itemsep to 0
\setenumerate{label={\normalfont (\roman*)},itemsep=0pt}

\def\lowfwd #1#2#3{{\mathop{\kern0pt #1}\limits^{\kern#2pt\raise.#3ex
\vbox to 0pt{\hbox{$\scriptscriptstyle\rightarrow$}\vss}}}}
\def\lowbkwd #1#2#3{{\mathop{\kern0pt #1}\limits^{\kern#2pt\raise.#3ex
\vbox to 0pt{\hbox{$\scriptscriptstyle\leftarrow$}\vss}}}}
\def\fwd #1#2{{\lowfwd{#1}{#2}{15}}}
\def\Sinf{S_{\aleph_0}}

\def\vS{{\hskip-1pt{\fwd S3}\hskip-1pt}}
\def\vSinf{\vS_{\mkern-.85\thinmuskip\aleph_0}}

\def\vE{{\hskip-1pt{\fwd{E}{3.5}}\hskip-1pt}}
\def\vF{{\hskip-1pt{\fwd{F}{3.5}}\hskip-1pt}}
\def\ve{\kern-1.5pt\lowfwd e{1.5}2\kern-1pt}
\def\ev{\kern-1pt\lowbkwd e{0.5}2\kern-1pt}
\def\vf{\kern-2pt\lowfwd f{2.5}2\kern-1pt}

%1.5

\lineskiplimit=-2pt %fürs Schriftbild

% +++ Theorem Stuff +++

% Ordinary theorems that are no main theorems that are numbered with respect to the section
\newtheorem{theorem}{Theorem}[section] 
\newtheorem{proposition}[theorem]{Proposition}%[section]
\newtheorem{corollary}[theorem]{Corollary}
\newtheorem{lemma}[theorem]{Lemma}

\newtheorem{problem}[theorem]{Problem}
% Mainresults 
\newtheorem{mainresult}{Theorem}

\newtheorem*{starCombIntro}{Star-comb lemma}

% The follwowing is for Theorems that have a custom number, which is particularly of help if you restate Theorems (just take the ref of the theorem that you want to restate as parameter)  

\newenvironment{customthm}[1]
  {\innercustomthm}
  {\endinnercustomthm}

\theoremstyle{definition}
\newtheorem{example}[theorem]{Example}

\newtheorem{definition}[theorem]{Definition}

\theoremstyle{remark}

\newtheorem*{ack}{Acknowledgement}

\begin{document}

\title[Duality theorems for stars and combs]{Duality theorems for stars and combs\\
I: Arbitrary stars and combs}

\author{Carl Bürger}
\author{Jan Kurkofka}%this is the corresponding author name
\address{Universität Hamburg, Department of Mathematics, Bundesstraße 55 (Geomatikum), 20146 Hamburg, Germany}%this is the corresponding author address/affiliation
\email{carl.buerger@uni-hamburg.de, jan.kurkofka@uni-hamburg.de}%this is the corresponding author email

\keywords{stars and combs; star-comb lemma; duality;  normal tree; tree-decomposition; rank; critical vertex set}

\@namedef{subjclassname@2020}{\textup{2020} Mathematics Subject Classification}
\subjclass[2020]{05C63, 05C40, 05C75, 05C05, 05C69}

\begin{abstract}
Extending the well-known star-comb lemma for infinite graphs, we characterise the graphs that do not contain an infinite comb or an infinite star, respectively, attached to a given set of vertices.
We offer several characterisations: in terms of normal trees, tree-de\-com\-po\-si\-tions, ranks of rayless graphs and tangle-distinguishing separators.
\end{abstract}
\vspace*{-1.14cm} 
\maketitle

\vspace*{-.75cm}

\section{Introduction}

\noindent It is well known, and easy to see, that every finite connected graph contains either a long path or a vertex of high degree.
Similarly,
\vspace{.5\baselineskip}
\begin{fleqn}%
\begin{equation*}%
\tag{$\ast$}%
\hspace{\parindent}\begin{aligned}\label{Fact:infGhasEitherRayOrInfDefVx}%
    \parbox{\textwidth-3\parindent}{\emph{Every infinite connected graph contains either a ray or a vertex of\\infinite degree}}%
\end{aligned}%
\end{equation*}%
\end{fleqn}%

\vspace{.5\baselineskip}
\noindent \cite[Proposition~8.2.1]{DiestelBook5}. 
Here, a \emph{ray} is a one-way infinite path. 
Call two properties of infinite graphs \emph{dual}, or \emph{complementary}, in a class of infinite graphs if they partition that class.
Despite~(\ref*{Fact:infGhasEitherRayOrInfDefVx}), the two properties of 
 `containing a ray' and `containing a vertex of infinite degree' are not complementary in the class of all infinite graphs: 
 an infinite complete graph, for example, contains both. 
Hence it is natural to ask for structures, more specific than vertices of infinite degree and rays, whose existence is complementary to that of rays and vertices of infinite degree, respectively.
Such structures do indeed exist.

For example, the property of having a vertex of infinite degree is trivially complementary, for connected infinite graphs, to the property that all distance classes from any fixed vertex are finite.
This duality is employed to prove~(\ref*{Fact:infGhasEitherRayOrInfDefVx}): if all the distance classes from some vertex are finite, then applying K\H{o}nig's infinity lemma \mbox{\cite[Lemma~8.1.2]{DiestelBook5}} to these classes yields a~ray.

Similarly, it is easy to see that having a Schmidt rank is complementary for infinite graphs to containing a ray~\cite{Schmidt1983}; see Section~\ref{subsection:CombRank} for the definition of the Schmidt rank. 
This duality allows for an alternative proof of~(\ref*{Fact:infGhasEitherRayOrInfDefVx}), as follows.
If $G$ is rayless, connected and infinite, then it has some rank $\alpha>0$.
Hence there is a finite vertex set $X\subset V(G)$ such that every component of $G-X$ has rank~$<\alpha$.
Then $G-X$ must have infinitely many components, and so by the pigeonhole principle some vertex in $X$ has infinite degree in $G$.

A stronger and localised version of~(\ref*{Fact:infGhasEitherRayOrInfDefVx}) is the star-comb lemma \cite[Lemma~8.2.2]{DiestelBook5}, a standard tool in infinite graph theory. 
Recall that a \emph{comb} is the union of a ray $R$ (the comb's \emph{spine}) with infinitely many disjoint finite paths, possibly trivial, that have precisely their first vertex on~$R$. 
The last vertices of those paths are the \emph{teeth} of this comb.
Given a vertex set $U$, a \emph{comb attached to} $U$ is a comb with all its teeth in $U$, and a \emph{star attached to} $U$ is a subdivided infinite star with all its leaves in $U$.
Then the set of teeth is the \emph{attachment set} of the comb, and the set of leaves is the \emph{attachment set} of the star.
\begin{starCombIntro}
Let $U$ be an infinite set of vertices in a connected graph $G$.
Then $G$ contains either a comb 
\at $U$ or a star \at $U$.
\end{starCombIntro}

Although the star-comb lemma trivially implies assertion~(\ref*{Fact:infGhasEitherRayOrInfDefVx}), with $U:=V(G)$, it is not primarily about the existence of one subgraph or another.
Rather, it tells us something about the nature of connectedness in infinite graphs: that the way in which they link up their infinite sets of vertices can take two fundamentally different forms, a star and a comb.
These two possibilities apply separately to all their infinite sets $U$ of vertices, and clearly, the smaller $U$ the stronger the assertion.

Just like the existence of rays or vertices of infinite degree, the existence of stars or combs \at a given set $U$ is not complementary (in the class of all infinite connected graphs containing $U$). %dual to each other. 
In this paper, we determine
%In the first paper of this series, we determined 
structures that are complementary to stars, and structures that are complementary to combs (always with respect to a fixed set $U$).

As stars and combs can interact with each other, this is not the end of the story.
For example, a given set $U$ might be connected in $G$ by both a star and a comb, even with infinitely intersecting sets of leaves and teeth. 
To formalise this, let us say that a subdivided star $S$ \emph{dominates} a comb $C$ if infinitely many of the leaves of $S$ are also teeth of $C$.
A \emph{dominating star} in a graph $G$ then is a subdivided star $S\subset G$ that dominates some comb $C\subset G$; and a \emph{dominated comb} in $G$ is a comb $C\subset G$ that is dominated by some subdivided star $S\subset G$.
In the remaining three papers~\cite{StarComb2TheDominatedComb,StarComb3TheUndominatedComb,StarComb4TheUndominatingStar} of this series we shall find complementary structures to the existence of these substructures (again, with respect to some fixed set $U$).
%Here, then is an overview of the four papers in our series, each naming the substructure for which duality theorems are proved in its title:

%\begin{enumerate}
%    \item[\textsc{i:}] \textsc{arbitrary stars and combs}~\SCintroList{1}{\SCnumber}
%        \item[\SCnumberHand{\SCnumber}{2}\textsc{ii:}] \textsc{dominating stars and dominated combs}~\SCintroList{2}{\SCnumber}
%    \item[\SCnumberHand{\SCnumber}{3}\textsc{iii:}] \textsc{undominated combs}~\SCintroList{3}{\SCnumber}
%    \item[\SCnumberHand{\SCnumber}{4}\textsc{iv:}] \textsc{undominating stars}~\SCintroList{4}{\SCnumber}
%\end{enumerate}

Just like the original star-comb lemma, our results can be applied as structural tools in other contexts.
Examples of such applications can be found in parts~\textsc{i--iii} of our series. 

\subsection*{I: Arbitrary stars and combs}

% +++ Introduction +++

In this paper we prove five duality theorems for combs, and two for stars.
The complementary structures they offer are quite different, and not obviously interderivable.

Our first result is obtained by techniques of Jung~\cite{jung69}.
Recall that a rooted tree $T\subset G$ is \emph{normal} in $G$ if the endvertices of every $T$-path in $G$ are comparable in the tree-order of $T$, cf.~\cite{DiestelBook5}.

\begin{customthm}{\ref{CombTreeDualityI}}
\TFAD
\begin{enumerate}
\item $G$ contains a \dblue{comb} \at $U$;
\item there is a \dblue{rayless normal tree} $T\subset G$ that contains $U$.
\end{enumerate}
\end{customthm}

\noindent To see that (ii) implies that $G$---in fact, the normal tree $T$---contains a star \at $U$ when $U$ is infinite, pick from among the nodes of $T$ that lie below infinitely many vertices of $T$ in $U$ one that is maximal in the tree-order of $T$.
Then its up-closure in $T$ contains the desired star.

Even though the normal tree from (ii) is in general not spanning, its separation properties still tell us a lot about the ambient graph $G$.
Our next result captures this overall structure of $G$ more explicitly (refer to~\cite{DiestelBook5} for the definition of tree-decompositions and adhesion sets):

\newpage
\begin{customthm}{\ref{CombTreeDualityII}}
\TFAD
\begin{enumerate}
    \item $G$ contains a \dblue{comb} \at $U$;
    \item $G$ has a \dblue{rayless tree-decomposition} into parts each containing at most finitely many vertices from $U$ and whose parts at non-leaves of the decomposition tree are all finite.
\end{enumerate}
Moreover, the tree-decomposition in \emph{(ii)} can be chosen with connected adhesion sets.
\end{customthm}

\noindent For $U=V(G)$, this theorem implies the following characterisation of rayless graphs by Halin~\cite{HalinRayless}: \emph{$G$ is rayless if and only if $G$ has a rayless tree-decomposition into finite parts.}

While Theorems~\ref{CombTreeDualityI} and~\ref{CombTreeDualityII} tell us about the structure of the graph around $U$, they further imply a more localised duality theorem for combs.
Call a finite vertex set $X\subset V(G)$ \emph{critical} if the collection $\CX$ of the components of $G-X$ having their neighbourhood precisely equal to $X$ is infinite.

\begin{customthm}{\ref{CombCritDuality}}
\TFAD
\begin{enumerate}
\item $G$ contains a \dblue{comb} \at $U$;
\item for every infinite $U'\subset U$ there is a \dblue{critical vertex set} $X\subset V(G)$ such that infinitely many of the components in $\CX$ meet $U'$.
\end{enumerate}
\end{customthm}

Critical vertex sets were introduced in~\cite{EndsTanglesCrit}.
As tangle-distinguishing separators, they have a surprising background involving the \SC\ \comp\ of~$G$, Robertson and Seymour's tangles from their graph-minor series, and Diestel's tangle \comp , cf.~\cite{StoneCechTangles,GMX,EndsAndTangles}.
Moreover, it turns out that Theorem~\ref{CombCritDuality} implies another characterisation of rayless graphs by Halin~\cite{Halin1965}.

The Schmidt rank of rayless graphs was employed by Bruhn, Diestel, Georgakopoulos and Sprüssel~\cite{UnfriendlyPartition} to prove the unfriendly partition conjecture for the class of rayless graphs by an involved transfinite induction on their rank.
We will show how the notion of a rank can be adapted to take into account a given set~$U$, so as to give a recursive definition of those graphs that do not contain a comb \at $U$. This yields our fourth duality theorem for combs:

\begin{customthm}{\ref{CombRankDuality}}
\TFAD
\begin{enumerate}
    \item $G$ contains a \dblue{comb} \at $U$;
    \item $G$ has a \dblue{$U$-rank}.
\end{enumerate}
\end{customthm}

With these four complementary structures for combs at hand, the question arises whether there is another complementary structure combining them all.
Our fifth duality theorem for combs shows that this is indeed possible:

\begin{customthm}{\ref{CombStrongDual}}
\TFAD
\begin{enumerate}
   \item $G$ contains a \dblue{comb} \at $U$;
   \item $G$ has a \dblue{tree-decomposition} that has the list $(\dagger)$ of properties.
\end{enumerate}
\end{customthm}
For the precise statement of this theorem, see Section~\ref{subsection:CombStrongDual}.
Essentially, the list $(\dagger)$ consists of the following four properties:\newpage
\begin{itemize}[label=\textbf{--}]
    \item its decomposition tree stems from a normal tree as in Theorem~\ref{CombTreeDualityI};
    \item it has the properties of the tree-decomposition in Theorem~\ref{CombTreeDualityII};
    \item the infinite-degree nodes of its decomposition tree correspond bijectively to the critical vertex sets of $G$ that are relevant in Theorem~\ref{CombCritDuality};
    \item the rank of its decomposition tree is equal to the $U$-rank of $G$\\from Theorem~\ref{CombRankDuality}.
\end{itemize}

Now that we have stated all the duality theorems for combs, let us turn to our two duality theorems for stars.
Recall that a vertex $v$ of $G$ \emph{dominates} a ray $R\subset G$ if there is an infinite $v$--$(R-v)$ fan in~$G$. 
Rays not dominated by any vertex are \emph{undominated}, cf.~\cite{DiestelBook5}.
Our first duality theorem for stars reads as follows:

\begin{customthm}{\ref{InitialStarNTduality}}
\TFAD
\begin{enumerate}
    \item $G$ contains a \dblue{star} \at $U$;
    \item there is a \dblue{locally finite normal tree} $T\subset G$ that contains $U$ and all whose rays are undominated in $G$.
\end{enumerate}
\end{customthm}

\noindent To see that (ii) implies that $G$---in fact, the normal tree---contains a comb \at $U$ when $U$ is infinite,
pick a ray in the locally finite down-closure of $U$ in the tree and extend it to a comb \at $U$.

We have seen normal trees before in our first duality theorem for combs, Theorem~\ref{CombTreeDualityI}.
Theorem~\ref{InitialStarNTduality} above compares with Theorem~\ref{CombTreeDualityI} as follows.
The only additional property required of the normal trees that are complementary to combs is that they are rayless.
Similarly, the normal trees that are complementary to stars have the additional property that they are locally finite.
However, they have the further property that all their rays are undominated in $G$.

This further property is necessary to ensure that the normal trees and stars in Theorem~\ref{InitialStarNTduality} exclude each other. To see this, let $G$ be obtained from a ray $R$ by completely joining its first vertex $r$ to all the other vertices of $R$, and suppose that $U=V(G)$.
Then $R\subset G$ with root $r$ is a locally finite normal tree containing $U$.
But the edges of $G$ at $r$ form a star \at $U$, so the further property is indeed necessary.

By contrast, we do not need to require in Theorem~\ref{CombTreeDualityI} that all the stars in the normal trees that are complementary to combs are undominating in $G$: this is already ensured by the nature of normal trees (see Lemma~\ref{NTlevelsDispersed} for details).

Our second duality theorem for stars is phrased in terms of tree-decompositions, similar to Theorem~\ref{CombTreeDualityII}:

\begin{customthm}{\ref{StarRayDualtiy}}
\TFAD
\begin{enumerate}
\item $G$ contains a \dblue{star} \at $U$;
\item $G$ has a \dblue{locally finite tree-decomposition} with finite and pairwise disjoint adhesion sets such that each part contains at most finitely many vertices from~$U$.
\end{enumerate}
Moreover, the tree-decomposition in \emph{(ii)} can be chosen with connected adhesion sets.
\end{customthm}

This paper is organised as follows.
Section~\ref{section:preliminaries} provides the tools and terminology that we use throughout this series.
%regarding the star-comb lemma, ends of graphs, critical vertex sets, normal trees and tree-decompositions.
Section~\ref{section:CombTreeCrit} and~\ref{section:StarRay} are dedicated to the duality theorems for combs and stars respectively.

Throughout this paper, $G=(V,E)$ is an arbitrary graph.

\begin{ack}
We are grateful to Nathan Bowler for many helpful comments, pointing out an error and simplifying some of our proofs.
We are grateful to our reviewers for more helpful comments that further improved this work.
\end{ack}

\section{Tools and terminology}\label{section:preliminaries}

\noindent Any graph-theoretic notation not explained here can be found in Diestel's textbook~\cite{DiestelBook5}.
A~non-trivial path $P$ is an $A$-\emph{path} for a set $A$ of vertices if $P$ has its endvertices but no inner vertex in $A$.
An independent set $M$ of edges in a graph $G$ is called a \emph{matching} of $A$ and $B$ for vertex sets $A,B\subset V(G)$ if every edge in $M$ has one endvertex in $A$ and the other in $B$.
%is incident with both a vertex in $A$ and a vertex in $B$.

\subsection{The star-comb lemma}
The predecessors of the star-comb lemma are the following facts:

\begin{lemma}[{\cite[Proposition~9.4.1]{DiestelBook5}}]\label{finiteGhasPathOrStar}
For every $m\in\N$ there is an $n\in\N$ such that each connected finite graph with at least $n$ vertices either contains a path of length $m$ or a star with $m$ leaves as a subgraph.
\end{lemma}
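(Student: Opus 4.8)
The plan is to reduce the statement to a clean dichotomy driven by the maximum degree. The key observation is that a star with $m$ leaves—that is, a subgraph isomorphic to $K_{1,m}$—occurs in $G$ precisely when some vertex of $G$ has degree at least $m$. So if $G$ contains no such star, then $\Delta(G)\le m-1$, and the task becomes to show that in this bounded-degree regime a sufficiently large connected graph is forced to contain a long path. The whole argument is then a pigeonhole/counting argument on distance spheres, with $n$ chosen at the very end.

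Concretely, fix $m$ and suppose $G$ is connected with no star on $m$ leaves, so $\Delta(G)\le m-1$. Fix any vertex $v$ and consider the distance spheres $S_i:=\{u:\operatorname{dist}(v,u)=i\}$ around $v$. Since $v$ has at most $m-1$ neighbours we have $|S_1|\le m-1$, and since every vertex at distance $i$ sends edges to at most $m-1$ vertices, the crude bound $|S_i|\le (m-1)^i$ holds for all $i\ge 1$. Summing over $i$, the ball of radius $k$ about $v$ satisfies $|B_k(v)|\le 1+\sum_{i=1}^{k}(m-1)^i$.

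I would then take $k:=m-1$ and choose $n:=2+\sum_{i=1}^{m-1}(m-1)^i$, one more than this ball bound. If $|V(G)|\ge n$, the ball $B_{m-1}(v)$ cannot contain all of $V(G)$, so some vertex $w$ has $\operatorname{dist}(v,w)\ge m$; as $G$ is connected, a geodesic from $v$ to $w$ is a path of length at least $m$, which contains a path of length exactly $m$. This closes the dichotomy: a graph with at least $n$ vertices either has a vertex of degree $\ge m$, yielding the star, or has $\Delta(G)\le m-1$, in which case the sphere count forces the path.

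There is no serious obstacle here, since the statement only asserts the existence of some $n$; the argument is essentially pigeonhole. The only points requiring a little care are the sphere bound $|S_i|\le(m-1)^i$ and the bookkeeping that converts an overflow of the radius-$(m-1)$ ball into a path of length exactly $m$ (rather than $m-1$), which is why I take the radius to be $m-1$ and $n$ strictly larger than $|B_{m-1}(v)|$. One could sharpen the constant—e.g.\ using $|S_i|\le (m-1)(m-2)^{i-1}$ for $i\ge 2$—but the crude bound already suffices.
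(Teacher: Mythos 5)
Your proof is correct. Note that the paper does not give its own proof of this lemma at all---it is quoted as Proposition~9.4.1 of Diestel's book---so there is no in-paper argument to compare against; your argument is the standard self-contained one for the statement as phrased here. The two halves are exactly right: the absence of a star with $m$ leaves as a subgraph is equivalent to $\Delta(G)\le m-1$, and then the ball-counting bound $|B_{m-1}(v)|\le 1+\sum_{i=1}^{m-1}(m-1)^i$ forces, in any larger connected graph, a vertex at distance at least $m$ from $v$, whose geodesic to $v$ contains a path of length exactly $m$. One remark for context: Diestel's Proposition~9.4.1 is actually a stronger, \emph{induced}-subgraph statement (every sufficiently large connected graph contains $K^r$, $K_{1,r}$ or $P^r$ induced), and its proof must additionally invoke Ramsey's theorem in the high-degree case, since a vertex of large degree only yields a star as a subgraph, not an induced one---its neighbourhood must first be split into a large clique or a large independent set. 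For the non-induced, two-outcome version stated in the paper, that extra step is unnecessary, and your elementary pigeonhole argument is complete; it is essentially the bounded-degree half of Diestel's proof, with the Ramsey half collapsed to the trivial observation about maximum degree.
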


\begin{lemma}[{\cite[Proposition~8.2.1]{DiestelBook5}}]\label{infGhasRayOrStar}
A connected infinite graph contains either a ray or a vertex of infinite degree.
\end{lemma}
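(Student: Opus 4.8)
The plan is to prove the nontrivial direction by contraposition: assuming that $G$ has no vertex of infinite degree, I would produce a ray. So suppose $G$ is a connected infinite graph in which every vertex has finite degree. I would fix an arbitrary vertex $v_0$ and, for each $n\in\N$, let $D_n$ be the $n$-th distance class from $v_0$, i.e.\ the set of vertices at distance exactly $n$ from $v_0$. These classes are pairwise disjoint and, by connectedness, cover $V(G)$. The idea is to feed this layered structure into König's infinity lemma.

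First I would check that the hypotheses of König's lemma are met. Each $D_n$ is \emph{finite}, by induction on $n$: $D_0=\{v_0\}$, and every vertex of $D_{n+1}$ has a neighbour in $D_n$, so $D_{n+1}$ lies in the union of the neighbourhoods of the finitely many vertices of $D_n$, which is finite because $G$ is locally finite. Each $D_n$ is also \emph{non-empty}: since $G$ is connected, $V(G)=\bigcup_{n}D_n$, and a shortest $v_0$--$v$ path meets every intermediate distance class, so if some $D_n$ were empty then all later classes would be too, making $V(G)$ a finite union of finite sets and contradicting that $G$ is infinite. Finally, the crucial adjacency condition holds: every vertex in $D_{n+1}$ has a neighbour in $D_n$, namely the penultimate vertex on a shortest path from $v_0$.

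With a sequence $D_0,D_1,D_2,\dots$ of disjoint, non-empty, finite sets and each vertex of $D_{n+1}$ adjacent to one of $D_n$, König's infinity lemma yields a sequence $v_0,v_1,v_2,\dots$ with $v_n\in D_n$ and $v_nv_{n+1}\in E(G)$ for all $n$. Because the $v_n$ lie in distinct distance classes they are pairwise distinct, so this sequence is the desired ray. This argument is essentially routine; the only points requiring care are the induction establishing that the distance classes remain finite (which is exactly where local finiteness is used) and the verification that infinitely many classes are non-empty. The genuine combinatorial work — the compactness step that converts the finite layers into an infinite path — is entirely delegated to König's infinity lemma, so I expect no real obstacle beyond assembling these hypotheses correctly.
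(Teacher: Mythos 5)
Your proof is correct and takes essentially the same approach as the paper: the paper treats this lemma as a direct consequence of K\H{o}nig's infinity lemma applied to the distance classes from a fixed vertex, observing that having no vertex of infinite degree is (for connected graphs) equivalent to all those classes being finite. Your write-up simply fills in the details of that same argument---the induction showing the classes are finite, their non-emptiness, and the adjacency condition feeding into K\H{o}nig's lemma---all of which are verified correctly.
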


The latter is a direct consequence of the K\H{o}nig's infinity lemma, \cite[Lemma~8.1.2]{DiestelBook5}.
Lemma~\ref{finiteGhasPathOrStar} has been generalised to higher connectivity, \cite{GenGridTheorem,JoerisPhD,OporowskiOxleyThomas}, and so has Lemma~\ref{infGhasRayOrStar} in~\cite{GollinHeuerKcon,halin78,TypicalInfinitelyEdgeconnectedGraphs,FareyGraphChar,OporowskiOxleyThomas}.
For an overview we recommend the introduction of \cite{GollinHeuerKcon}.

For locally finite trees, Lemma~\ref{infGhasRayOrStar} already yields a comb:

\begin{lemma}\label{sclForLocFinTrees}
If $U$ is an infinite set of vertices in a locally finite rooted tree $T$, then $T$ contains a comb \at $U$ whose spine starts at the root.
\end{lemma}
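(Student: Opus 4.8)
The plan is to build the spine by a greedy descent from the root that always stays above infinitely many vertices of $U$, and then to read off the teeth from the bushes that the spine passes by on its way down.

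For a vertex $v$ of the rooted tree $T$, let $T_v$ denote the subtree consisting of $v$ and all its descendants, and call $v$ \emph{heavy} if $T_v$ contains infinitely many vertices of $U$. The root is heavy because $U$ is infinite. If $v$ is heavy, then since $v$ has only finitely many children (as $T$ is locally finite) and $T_v\cap U$ is the union of $\{v\}\cap U$ with the finitely many sets $T_c\cap U$ over the children $c$ of $v$, the pigeonhole principle provides a heavy child of $v$. Starting at the root and repeatedly passing to a heavy child therefore yields a ray $R=v_0v_1v_2\cdots$ that starts at the root and all of whose vertices are heavy; this is the greedy-descent incarnation of König's infinity lemma underlying Lemma~\ref{infGhasRayOrStar}. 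This $R$ will be the spine.

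To find the teeth, I would consider for each $i$ the block $B_i:=T_{v_i}\setminus T_{v_{i+1}}$. These blocks partition $V(T)$, since every vertex is a descendant of $v_0$ but, having finite depth, is a descendant of only finitely many of the $v_i$; moreover $B_i$ meets the spine precisely in $v_i$. I would then pick, whenever $B_i\cap U\neq\emptyset$, a vertex $u_i\in B_i\cap U$ together with the tree-path $P_i$ from $v_i$ down to $u_i$. Each such $P_i$ lies entirely inside $B_i$ and hence meets $R$ only in its first vertex $v_i$, and paths chosen in distinct blocks are disjoint; so $R\cup\bigcup_i P_i$ is a comb \at $U$ with spine starting at the root, as desired.

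What remains---and what I expect to be the only real point---is to verify that infinitely many blocks meet $U$, so that the comb acquires infinitely many teeth. Here I would argue by contradiction: if $B_i\cap U=\emptyset$ for all $i\ge N$, then $T_{v_i}\cap U=T_{v_{i+1}}\cap U$ for all such $i$, so this set equals the \emph{infinite} set $T_{v_N}\cap U$ (infinite because $v_N$ is heavy) for every $i\ge N$. But then any fixed $u\in T_{v_N}\cap U$ would be a descendant of every $v_i$ with $i\ge N$, which is impossible since $u$ has only finitely many ancestors while the $v_i$ are infinitely many distinct vertices. This is exactly where the heaviness of the spine vertices, bought by the greedy descent, pays off.
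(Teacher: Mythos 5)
Your proof is correct and is essentially the paper's argument: the paper applies K\H{o}nig's infinity lemma (Lemma~\ref{infGhasRayOrStar}) to the locally finite subtree induced by the down-closure of $U$ to obtain a ray starting at the root, and then extends that ray recursively to the comb. Your greedy descent through heavy vertices is exactly that K\H{o}nig-type construction made explicit (any vertex of the down-closure lying on such a ray is automatically heavy), and your block partition simply spells out the recursive extension of the ray to a comb that the paper leaves as routine.
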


\begin{proof}
The down-closure of $U$ in the tree-order of $T$ induces a locally finite subtree which, by Lemma~\ref{infGhasRayOrStar} above, contains a ray starting at the root, say.
This ray can be extended recursively  to the desired comb. 
\end{proof}

For rayless trees, the situation is simpler:

\begin{lemma}\label{sclForRaylessTrees}
If $U$ is an infinite set of vertices in a rayless rooted tree $T$, then $T$ contains a star \at $U$ which is contained in the up-closure of its central vertex in the tree-order of $T$.
\end{lemma}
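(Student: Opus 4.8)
The plan is to locate a single vertex $c$ that will serve as the centre of the star, by descending through $T$ to a place where $U$ is spread out across infinitely many branches. For a vertex $v$ let $T_v$ denote the subtree of $T$ on the up-closure of $v$ in the tree-order, that is, $v$ together with its descendants, and call $v$ \emph{heavy} if $U\cap V(T_v)$ is infinite. The root is heavy, since it lies below all of $U$, so the set $W$ of heavy vertices is non-empty; and since $T_w\subseteq T_v$ whenever $v\le w$, the set $W$ is closed downwards in the tree-order.

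First I would show that $W$ has a maximal element. An infinite chain $v_0<v_1<\cdots$ in the tree-order would yield, by concatenating the paths $v_iTv_{i+1}$, a ray in $T$, contradicting that $T$ is rayless; hence every chain in $T$, and in particular every chain in $W$, is finite. Starting from any heavy vertex and repeatedly passing to a heavier descendant must therefore terminate, and its final vertex is a maximal element $c$ of $W$.

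Next I would read the star off from the maximality of $c$. No proper descendant of $c$ is heavy, so $U\cap V(T_w)$ is finite for every child $w$ of $c$. Writing $U\cap V(T_c)$ as the union of $\{c\}\cap U$ with the sets $U\cap V(T_w)$ over the children $w$ of $c$, the left-hand side is infinite while each term on the right is finite; hence infinitely many children $w$ of $c$ satisfy $U\cap V(T_w)\neq\emptyset$. For each such child I choose a vertex $u_w\in U\cap V(T_w)$ and let $P_w$ be the path from $c$ to $u_w$ in $T$. The paths $P_w$ run into the pairwise disjoint subtrees $T_w$ and so meet only in $c$; together they form a subdivided star with centre $c$ and infinitely many leaves in $U$, entirely contained in $T_c$, that is, in the up-closure of $c$. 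This is the desired star \at $U$.

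I expect the only real content to be the maximality step, where raylessness is converted, via the absence of infinite chains, into the existence of a maximal heavy vertex. The subsequent pigeonhole argument and the disjointness of the paths $P_w$ are then immediate from the structure of a rooted tree.
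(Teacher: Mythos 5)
Your proof is correct and follows essentially the same route as the paper: both pick a vertex $c$ that is maximal in the tree-order among those lying below infinitely many vertices of $U$ (using raylessness to guarantee such a maximal element exists), and then obtain the star inside the up-closure of $c$. You merely spell out the details the paper leaves implicit, namely the termination argument via absence of infinite chains and the pigeonhole step showing infinitely many children of $c$ meet $U$ in their subtrees.
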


\begin{proof}
Among all the nodes of $T$ that lie below some infinitely nodes from $U$, pick one node $t$, say, that is maximal in the tree-order of $T$.
Then $t$ has infinite degree and we find the desired star with centre $t$ in the up-closure of $t$.
\end{proof}

%We already stated the star-comb lemma in its basic form in the introduction, but a stronger version is known:
The general case can be reduced to trees:

\begin{lemma}[Star--comb lemma]\label{StarCombLemma}
Let $U$ be an infinite set of vertices in a connected graph~$G$.
Then $G$ contains either a comb \at ~$U$ or a star \at ~$U$.
\end{lemma}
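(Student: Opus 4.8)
The plan is to reduce the general statement to the case of trees, which is already handled by Lemmas~\ref{sclForLocFinTrees} and~\ref{sclForRaylessTrees}, and then to invoke the appropriate one according to a single structural dichotomy. First I would pass to a spanning tree. Since $G$ is connected it has a spanning tree $T$; as every comb and every star that we might find in $T$ is also a subgraph of $G$, and $U\subseteq V(T)$, it suffices to find a comb or a star \at $U$ inside $T$. I fix an arbitrary root $r$ of $T$, giving $T$ its tree-order, and let $D$ be the down-closure of $U$ in that order. Then $D$ is a subtree of $T$ that still contains the infinite set $U$, and it has the crucial property that every vertex of $D$ lies on a path from $r$ to some vertex of $U$; in particular, every branch of $D$ above any given vertex meets~$U$.

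The heart of the argument is a case distinction on whether $D$ is locally finite. If $D$ is locally finite, then it is a locally finite rooted tree containing the infinite vertex set $U$, so Lemma~\ref{sclForLocFinTrees} provides a comb \at $U$ inside $D\subseteq G$, and we are done. If $D$ is not locally finite, then, since every vertex other than $r$ has exactly one neighbour below it, some vertex $t$ of $D$ has infinitely many children in $D$. For each such child $c$ the subtree of $D$ above $c$ contains a vertex $u_c\in U$, by the defining property of the down-closure, and I fix a $t$--$u_c$ path through $c$. Distinct children lie in distinct branches above $t$, so these paths meet only in $t$ and their endvertices $u_c$ are distinct; hence their union is a subdivided star with centre $t$ and infinitely many leaves in $U$, that is, a star \at $U$.

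I would expect the only real obstacle to be pinning down the right dichotomy: note that it is \emph{not} ``locally finite versus rayless'', since a tree can be neither, but rather ``locally finite versus containing a vertex of infinite degree''. Restricting to the down-closure $D$ is exactly what makes the infinite-degree case yield a star \at $U$ rather than merely a star, because it guarantees that every child-branch reaches~$U$. Once this is set up, both cases close immediately: the locally finite case by Lemma~\ref{sclForLocFinTrees}, and the infinite-degree case either by the one-line hands-on construction above or, equivalently, by applying Lemma~\ref{sclForRaylessTrees} to the up-closure of $t$ in $D$.
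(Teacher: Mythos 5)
Your proof is correct, and it shares the paper's master plan---reduce to a tree through which $U$ is spread, then split on ``locally finite'' versus ``has a vertex of infinite degree''---but it implements the reduction differently. The paper uses Zorn's lemma to obtain a maximal tree $T\subset G$ all of whose edges lie on a $U$-path in $T$; maximality is what forces $U\subset V(T)$, and the $U$-path property plays exactly the role your down-closure plays, ensuring that at an infinite-degree vertex the incident edges extend to paths ending in $U$, hence to a star \at $U$. You instead take an arbitrary spanning tree and pass to the down-closure $D$ of $U$, so that $U$ is cofinal in $D$ by construction; this replaces the slightly subtle maximality argument by a standard spanning-tree existence plus an elementary pruning step, and the two case closures then coincide (the locally finite case is handled in both proofs by Lemma~\ref{sclForLocFinTrees}, and your hands-on star construction is the same argument the paper runs on its maximal tree). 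One small caveat: your parenthetical claim that the infinite-degree case could ``equivalently'' be closed by applying Lemma~\ref{sclForRaylessTrees} to the up-closure of $t$ in $D$ does not literally work, since that up-closure need not be rayless and Lemma~\ref{sclForRaylessTrees} requires a rayless tree; but this is only an aside, and the explicit construction you give---one $t$--$u_c$ path through each of infinitely many children, pairwise meeting only in $t$---is complete and correct as it stands.
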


%Let $G$ be any connected graph and let $U\subset V(G)$ be infinite.
%If $\kappa\le\vert U\vert$ is a regular\,\footnote{A cardinal $\kappa$ is \emph{regular} if there is no family $(\,\kappa_\alpha\mid\alpha<\lambda\,)$ with $\lambda<\kappa$ and all $\kappa_\alpha<\kappa$ such that $\bigcup_{\alpha<\lambda}\kappa_\alpha=\kappa$. For example, $\aleph_0$ and $\aleph_1$ are regular while $\aleph_\omega=\bigcup_{n<\omega}\aleph_n$ is not.} cardinal,
%then $U$ has a subset $U'$ of size $\kappa$ such that at least one of the following assertions holds:
%\begin{enumerate}
%\item $G$ contains a comb \at $U$ whose attachment set is $U'$;
%\item $G$ contains a star \at $U$ whose attachment set is $U'$.
%\end{enumerate}
%In particular, if $\kappa$ is uncountable, then \emph{(i)} fails and \emph{(ii)} holds for every such $U'$.
%\end{lemma}

%\noindent For singular cardinals $\kappa$ this version of the star-comb lemma is not true in general, as the following example demonstrates.
%Consider the singular cardinal $\kappa=\aleph_\omega$.
%Let $G$ be the rayless tree that is obtained from a $K_{1,\omega}$ with $\omega$ as set of leaves by adding pairwise disjoint copies of $K_{1,\aleph_n}$, one for each non-zero $n<\omega$, such that $K_{1,\aleph_n}$ meets $K_{1,\omega}$ precisely in $n$ and $n$ happens to be the central vertex of $K_{1,\aleph_n}$.
%Then the rayless tree $G$ cannot contain a comb, and it cannot contain subdivision of a star $K_{1,\kappa}$ since every vertex of $G$ has degree $<\kappa$, but the vertex set of $G$ has size~$\kappa$.

\begin{proof}
Using Zorn's lemma we find a maximal tree $T\subset G$ all whose edges lie on a $U$-path in $T$. Then $T$ contains $U$.
If $T$ has a vertex $v$ of infinite degree, then its incident edges extend to $v$--$U$ paths whose union is the desired star \at ~$U$.
Otherwise $T$ is locally finite, and we find the desired comb \at ~$U$ using Lemma~\ref{sclForLocFinTrees} in~$T$.
%every vertex of $T$ has degree $<\kappa$.
%After fixing an arbitrary vertex, an inductive argument---utilising the regularity of $\kappa$---shows that every distance class of $T$ has size $<\kappa$.
%As $V(T)$ is the countable union of these distance classes, we deduce from the regularity of $\kappa$ that $\kappa=\aleph_0$ is the only possibility.
%This, then, means that $T$ is locally finite, and hence contains a ray by Lemma~\ref{infGhasRayOrStar}.
%As every edge of $T$ lies on a $U$-path in $T$, an inductive construction turns this ray into a comb \at $U$, and we may let $U'$ consist of its $\aleph_0=\kappa$ many teeth.
\end{proof}

We wish to remark that the star-comb lemma has been generalised to take the cardinality of~$U$ into account, see the work by Diestel and Kühn~\cite{Ends} for the regular case and the work by Gollin and Heuer \cite[Corollary~8.1]{GollinHeuerKcon} for the singular case.

%~
%Recently, Gollin and Heuer~\cite{GollinHeuerKcon} introduced a way more complex version of the star-comb lemma above for the more difficult singular case, the \emph{Frayed-Star-Comb Lemma}, \cite[Corollary~8.1]{GollinHeuerKcon}.

%The version for regular cardinals has been proved in, e.g., \cite{Ends} and \cite{GollinHeuerKcon}. We repeat the short proof here for the sake of convenience:

%We remark that this version of the star-comb lemma can be proved alternatively by means of~\cite[Lemma~III.6.14]{kunen2011set}.

% +++ Ends of graphs +++

\subsection{Separations}
For a vertex set $X\subset V(G)$ we denote the collection of the components of $G-X$ by $\cC_X$.
If any $X\subset V(G)$ and $\cC\subset\cC_X$ are given, then these give rise to a separation of $G$ which we denote by
\begin{align*} 
    \sep{X}{\cC}:=\big\{\;V\setminus V[\cC]\;,\;X\cup V[\cC]\;\big\}
\end{align*}
where $V[\cC]=\bigcup\,\{\,V(C)\mid C\in\cC\,\}$.
Note that every separation $\{A,B\}$ of $G$ can be written in this way. 
For the orientations of $\sep{X}{\cC}$ we write
\begin{align*}
\rsep{X}{\cC}:=\big(\;V\setminus V[\cC]\;,\;X\cup V[\cC]\;\big)\quad\text{and}\quad\lsep{\cC}{X}:=\big(\;V[\cC]\cup X\;,\;V\setminus V[\cC]\;\big).
\end{align*}
We write $\sep{X}{C}$ and $\rsep{X}{C}$ and $\lsep{C}{X}$ instead of $\sep{X}{\{C\}}$ and $\rsep{X}{\{C\}}$ and $\lsep{\{C\}}{X}$ respectively.
The set of all finite-order separations of a graph $G$ is denoted by \mbox{$\Sinf=\Sinf(G)$}.

\subsection{Ends of graphs}

We write $\cX=\cX(G)$ for the collection of all finite subsets of the vertex set $V$ of $G$, partially ordered by inclusion.
An \emph{end} of $G$, as defined by Halin~\cite{halin64}, is an equivalence class of rays of $G$, where a ray is a one-way infinite path.
Here, two rays are said to be \emph{equivalent} if for every $X\in\cX$ both have a subray (also called \emph{tail}) in the same component of $G-X$. 
So in particular every end $\omega$ of $G$ chooses, for every $X\in\cX$, a unique component $C(X,\omega)=C_G(X,\omega)$ of $G-X$ in which every ray of $\omega$ has a tail. 
In this situation, the end $\omega$ is said to \emph{live} in $C(X,\omega)$.
The set of ends of a graph $G$ is denoted by $\Omega(G)$.
We use the convention that $\Omega$ always denotes the set of ends $\Omega(G)$ of the graph named $G$.

A vertex $v$ of $G$ \emph{dominates} a ray $R\subset G$ if there is an infinite $v$--$(R-v)$ fan in~$G$. 
Rays not dominated by any vertex are \emph{undominated}.
An end of $G$ is \emph{dominated} and \emph{undominated} if one (equivalently:~each) of its rays is dominated and undominated, respectively.
%A vertex $v\in G$ is said to \emph{dominate} an end $\omega$ of $G$ if no finite subset of $V(G-v)$ separates $v$ from a ray in $\omega$.
%Then $v$ \emph{dominates} the rays $R\in\omega$ as well. 
If $v$ does not dominate $\omega$, then there is an $X\in\cX$ which \emph{strictly separates} $v$ from $\omega$ in that $v\notin X\cup C(X,\omega)$.
More generally, if no vertex of $Y\in\cX$ dominates $\omega$, then there is an $X\in\cX$ \emph{strictly separating} $Y$ from $\omega$ in that $Y$ avoids the union $X\cup C(X,\omega)$.
Let us say that an oriented finite-order separation $(A,B)$ \emph{strictly separates} a set $X\subset V(G)$ of vertices from a set $\Psi\subset\Omega$ of ends if $X\subset A\setminus B$ and every end in $\Psi$ lives in a component of $G[B\setminus A]$.

Let us say that an end $\omega$ of $G$ is contained \emph{in the closure} of $M$, where $M$ is either  a subgraph of $G$ or a set of vertices of $G$, if for every $X\in\cX$ the component $C(X,\omega)$ meets $M$.
Equivalently, $\omega$ lies in the closure of $M$ if and only if $G$ contains a comb \at $M$ with its spine in $\omega$.
We write $\Abs{M}$ 
for the subset of $\Omega$ that consists of the ends of $G$ lying in the closure of $M$. 
Note that $\Abs{H}$ usually differs from $\Omega(H)$ for subgraphs $H\subseteq G$: 
For example, if $G$ is a ladder and $H$ is its outer double ray, then $\Abs{H}$ consists of the single end of $G$ while $\Omega(H)$ consists of the two ends of the double ray in $H$.
Readers familiar with $\vert G\vert$ as in \cite{DiestelBook5} will note that
$\Abs{M}$ is the intersection of $\Omega$ with the closure of $M$ in $\vert G\vert$, which in turn coincides with the topological frontier of $M\setminus\mathring{E}$ in the space $\vert G\vert\setminus\mathring{E}$.
%$\Abs{M}=\closureInExt{M}{\vert G\vert}\cap\Omega=\partial_{\,\vert G\vert\setminus\mathring{E}\;}M$ where $\partial$ in the last term is the topological frontier.
If an end $\omega$ of $G$ does not lie in the closure of $M$, and if $X\in\cX$ witnesses this (in that $C(X,\omega)$ avoids $M$), then $X$ is said to \emph{separate} $\omega$ from $M$ (and $M$ from~$\omega$).
Carmesin~\cite{carmesin2014all} observed that

\begin{lemma}
Let $G$ be any graph.
If $H\subset G$ is a connected subgraph and $\omega$ is an undominated end of $G$ lying in the closure of $H$, then $H$ contains a ray from $\omega$.
\end{lemma}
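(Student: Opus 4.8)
The plan is to produce the ray in $H$ by applying the star--comb lemma (Lemma~\ref{StarCombLemma}) \emph{inside} $H$ to a suitable infinite vertex set, and then to use the undominatedness of $\omega$ to kill the star alternative. First I would fix the vertex set. Since $\omega$ lies in the closure of $H$, the graph $G$ contains a comb attached to $H$ with its spine in $\omega$; let $T=\{t_1,t_2,\dots\}$ be its teeth, an infinite set of pairwise distinct vertices of $H$. The one routine observation I need is that the teeth \emph{converge to} $\omega$, meaning that for every finite vertex set $X$ all but finitely many teeth lie in $C(X,\omega)$: a tail of the spine lies in $C(X,\omega)$, all but finitely many of the comb's pairwise disjoint paths attach to that tail and avoid the finite set $X$, and such a path keeps its tooth in $C(X,\omega)$.

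Next I would apply Lemma~\ref{StarCombLemma} to the connected graph $H$ and the infinite set $T$, obtaining inside $H$ either a comb or a star attached to $T$. In the comb case, its spine $R'$ is a ray of $H$, and I would verify that $R'\in\omega$. For any finite $X$, a tail $R'_0$ of $R'$ avoids $X$ and hence lies in a single component of $G-X$; since infinitely many of the comb's attachment paths meet $R'_0$, avoid $X$, and end in a tooth lying in $C(X,\omega)$, that one component must be $C(X,\omega)$. Thus $R'$ has a tail in $C(X,\omega)$ for every finite $X$, so $R'$ is a ray of $H$ in $\omega$, which is what we want.

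It remains to exclude the star case, and this is where undominatedness enters and where I expect the only real subtlety. Suppose $H$ contained a star attached to $T$, that is, an infinite fan from some centre $c$ to distinct teeth. I would show that $c$ dominates $\omega$, contradicting the hypothesis. Fix a finite $X$ (the case $c\in X$ being trivial, so assume $c\notin X$). As the fan paths pairwise meet only in $c$, at most $|X|$ of them meet $X$, so infinitely many avoid $X$; among these, infinitely many end in a tooth lying in $C(X,\omega)$ because the teeth converge to $\omega$, and any such path, being connected and disjoint from $X$, forces $c$ into $C(X,\omega)$. Hence $c\in X\cup C(X,\omega)$ for every finite $X$, i.e.\ $c$ dominates $\omega$, which is impossible.

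The main obstacle is thus conceptual rather than computational: recognising that the star alternative of the star--comb lemma, applied to the teeth inside $H$, is \emph{exactly} a witness that its centre dominates $\omega$. Once this is seen, the undominatedness of $\omega$ rules out the star, forces the comb alternative, and delivers the desired ray in $H$.
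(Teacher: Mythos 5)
Your proposal is correct and follows exactly the paper's own proof: take a comb in $G$ attached to $H$ with spine in $\omega$, apply the star-comb lemma inside $H$ to its teeth, use undominatedness of $\omega$ to rule out the star outcome, and check that the resulting comb's spine is equivalent to the original spine. You have merely spelled out the details (teeth converging to $\omega$, the star's centre dominating $\omega$) that the paper leaves implicit.
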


\begin{proof}
Since $\omega$ lies in the closure of $H$ we find a comb in $G$ \at $H$ with spine in $\omega$. 
And as $\omega$ is undominated in $G$, the star-comb lemma in $H$ must return a comb in $H$ \at the attachment set of the first comb.
Then the two combs' spines are equivalent in $G$.
\end{proof}

Another way of viewing the ends of a graph goes via its \emph{directions}: choice maps $f$ assigning to every $X\in\cX$ a component of $G-X$ such that $f(X')\subset f(X)$ whenever $X'\supset X$. 
Every end $\omega$ defines a unique direction $f_\omega$ by mapping every $X\in \cX$ to $C(X,\omega)$. 
Conversely, Diestel and Kühn proved in~\cite{Ends} (Theorem~\ref{thm: directions} below) that every direction in fact comes from a unique end in this way, thus giving a one-to-one correspondence between the ends and the directions of a graph.

The advantage of this point of view stems from an inverse limit description of the directions, as follows. (For details on inverse limits, see e.g.\ \cite{EngelkingBook} or \cite{ProfiniteGroups}.
Recall that a poset $(P,{\le})$ is said to be \emph{directed} if for all $p,q\in P$ there is an $r\in P$ with $r\ge p$ and $r\ge q$.)
Note that $\cX$ is directed by inclusion; for every $X\in\cX$ let $\cC_X$ consist of the components of $G-X$; endow each $\cC_X$ with the discrete topology; and let $\cb_{X',X}\colon\cC_{X'}\to\cC_X$ for $X'\supset X$ send each component of $G-X'$ to the component of $G-X$ containing it; then $\{\cC_X,\,\cb_{X',X},\,\cX\}$ is an inverse system whose inverse limit, by construction, consists of the directions.

\begin{theorem}[{\cite[Theorem~2.2]{Ends}}]\label{thm: directions}
Let $G$ be any graph. Then the map $\omega\mapsto f_\omega$ is a bijection between the ends of $G$ and its directions, i.e.\ $\Omega=\invLim{}\cC_X$.
\end{theorem}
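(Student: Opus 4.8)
The plan is to verify the three standard ingredients---well\nobreakdash-definedness, injectivity, and surjectivity---of which only the last carries real content.

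First I would check that each $f_\omega$ really is a direction, i.e.\ that $f_\omega(X')\subseteq f_\omega(X)$ whenever $X\subseteq X'$ lie in $\cX$: any ray in $\omega$ has a tail in $C(X',\omega)=f_\omega(X')$, and this tail also lies in $C(X,\omega)=f_\omega(X)$, so the component $f_\omega(X')$ of $G-X'$ is contained in the component $f_\omega(X)$ of $G-X$. Injectivity is immediate from the definition of the end relation: if $\omega\ne\omega'$, then rays $R\in\omega$ and $R'\in\omega'$ are inequivalent, so some finite $X$ separates their tails into distinct components of $G-X$, whence $f_\omega(X)\ne f_{\omega'}(X)$.

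Second---the heart of the matter---I would prove surjectivity by realising a given direction $f$ through a ray. Two elementary observations drive the construction. \emph{(Adjacency)} For finite $X$ and any $v\in f(X)$, the component $f(X\cup\{v\})$ is one of the components of the connected graph $G[f(X)]-v$, so $v$ has a neighbour in $f(X\cup\{v\})$; hence from a vertex lying in the direction we can always step one edge further into it. \emph{(Cofinal agreement)} If two directions agree on a family $\mathcal Y\subseteq\cX$ that is cofinal under inclusion, then they agree everywhere, since for arbitrary finite $X$ one picks $Y\supseteq X$ in $\mathcal Y$ and notes that $f(X)$ is forced to be the unique component of $G-X$ containing $f(Y)$. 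With these in hand I would build recursively a ray $R=v_0v_1\cdots$ together with an increasing sequence $X_0\subseteq X_1\subseteq\cdots$ of finite sets, maintaining the invariant $v_n\in f(X_n)$ where $X_n$ contains $v_0,\dots,v_{n-1}$. The Adjacency observation lets me extend; by always also throwing a fresh vertex into $X_{n+1}$ (appending, if necessary, a short path that meets the new target component only in its last vertex, so that the invariant survives), I can arrange $\bigcup_nX_n$ to exhaust the vertex set. Then $f$ and the direction $f_\omega$ of the end $\omega$ containing $R$ agree on the cofinal family $\{X_n\}$, and Cofinal agreement yields $f_\omega=f$.

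The main obstacle is precisely the requirement that the separating sets $X_n$ become cofinal, and it is what makes surjectivity delicate rather than routine. Agreement of $f_\omega$ with $f$ on the sets $X_n$ actually used in the construction does \emph{not} by itself give $f_\omega=f$: the two directions can still differ on a finite set $X$ containing a vertex that dominates $\omega$, so a greedy ray may follow the ``wrong branch'' at a dominating vertex that it never records. When $G$ is countable one enumerates $V$ and feeds its vertices into the $X_n$ one at a time, making $\{X_n\}$ cofinal in all of $\cX$ and thereby resolving the issue. For arbitrary $G$ I would first pass to a suitable countable subgraph $H\subseteq G$ on which $f$ restricts to a direction $f_H$, apply the countable case to realise $f_H$ by a ray $R\subseteq H$, and finally check that $R$ induces $f$ back in $G$; arranging $H$ so that it captures enough of the domination structure of $f$ for this last check to go through is the crux of the argument.
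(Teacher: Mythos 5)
The paper itself gives no proof of this statement: it is imported verbatim as \cite[Theorem~2.2]{Ends}, so your proposal can only be judged against Diestel and Kühn's proof and on its own merits. Your two easy steps are fine, and your countable case is essentially the standard argument and can be made rigorous: taking $X_{n+1}$ to consist of $X_n$, all vertices used by the ray so far, and the next enumerated vertex, one picks a minimal path from the current endpoint into the new target component $D:=f(X_{n+1})$; since that path's interior is disjoint from $D$ by minimality, $D$ stays connected after the interior is absorbed into the next cut set, so $D$ is forced to be (contained in) the component that $f$ selects next, and your invariant survives. Together with your correct ``cofinal agreement'' observation this settles every countable $G$.

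The genuine gap is the uncountable case, and it is exactly where the entire content of the theorem lies. You propose to pass to a countable subgraph $H\subseteq G$ on which $f$ restricts to a direction $f_H$, realise $f_H$ by a ray $R\subseteq H$, and then check that $f_{\omega(R)}=f$ in $G$; but you concede that arranging $H$ so that this last check goes through is ``the crux'' and you give no construction and no proof. This cannot be deferred as routine: the wrong-branch phenomenon you yourself identified reappears at the level of the reduction, now caused by vertices \emph{outside} $H$. For a finite $X\subseteq V(G)$ with $X':=X\cap V(H)$, the tail of $R$ lies in $f(X')\supseteq f(X)$, but the finitely many vertices of $X\setminus V(H)$ may separate, inside $f(X')$, that tail from the component $f(X)$ --- and no countable closure process can enumerate these candidates in advance, since there are uncountably many of them. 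A concrete warning example: let $G$ consist of two disjoint rays $A$, $B$ together with two hubs $x,y$ joined to every vertex of $A\cup B$, and let $f$ be the direction of the end of $A$. The subgraph $H=B$ carries a direction $f_H$ (send each finite $X\subseteq V(B)$ to the infinite component of $B-X$) satisfying $f_H(X)\subseteq f(X)$ for all finite $X\subseteq V(H)$, yet every ray realising $f_H$ defines the \emph{other} end of $G$. So any viable notion of ``restriction'' and any viable choice of $H$ must already encode the domination structure of $f$, which is an argument of the same order of difficulty as the theorem itself. This is why Diestel and Kühn do not reduce to the countable case but work in $G$ directly, distinguishing whether some vertex dominates the direction and, in the undominated case, building a sequence of finite separators $X_n$ with $X_{n+1}\cup f(X_{n+1})\subseteq f(X_n)$ --- the technical key result that this paper quotes in Section~\ref{subsection:StarTree}. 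As it stands, your proposal proves the theorem only for countable~$G$.
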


\noindent From now on we do not distinguish between $\Omega$ and the inverse limit space $\invLim{}\cC_X$ with the inverse limit topology, and we call $\Omega$ the end space.

If a graph $G$ is locally finite, then the star-comb lemma always yields a comb.
This fact has been generalised in Lemma~\ref{KXgiveCompactClosure} below, where the proof relies on the combination of Halin's combinatorial definition of an end with the topological inverse limit point of view on ends as directions:

\begin{lemma}\label{KXgiveCompactClosure}
Let $G$ be any graph and let $U\subset V(G)$ be infinite.
If for every $X\in\cX$ only finitely many components of $G-X$ meet $U$, then $\Abs{U}$ is a non-empty and compact subspace of $\Omega$.
\end{lemma}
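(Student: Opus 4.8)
The plan is to realise $\Abs{U}$ as the inverse limit of the subsystem of $\{\cC_X,\,\cb_{X',X},\,\cX\}$ obtained by restricting attention, at each $X\in\cX$, to those components of $G-X$ that meet $U$, and then to invoke the standard fact that inverse limits of non-empty finite spaces over a directed poset are non-empty and compact. So for each $X\in\cX$ I let $\cC_X^U\subseteq\cC_X$ denote the set of components of $G-X$ that meet $U$. Since $U$ is infinite and $X$ is finite, the set $U\setminus X$ is non-empty, and any of its vertices lies in a component of $G-X$ that then meets $U$; hence $\cC_X^U\neq\emptyset$. By hypothesis $\cC_X^U$ is finite. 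Moreover the bonding maps restrict: if $X'\supset X$ and $C'\in\cC_{X'}^U$, then the component $\cb_{X',X}(C')$ of $G-X$ contains $C'$ and hence meets $U$, so $\cb_{X',X}$ maps $\cC_{X'}^U$ into $\cC_X^U$. Thus $\{\cC_X^U,\,\cb_{X',X}\rest\cC_{X'}^U,\,\cX\}$ is an inverse system of non-empty finite discrete spaces.

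Next I would identify $\Abs{U}$ with this inverse limit. By Theorem~\ref{thm: directions} an end $\omega$ may be viewed as the direction $f_\omega\colon X\mapsto C(X,\omega)$. By definition $\omega$ lies in the closure of $U$ exactly when $C(X,\omega)$ meets $U$ for every $X\in\cX$, that is, exactly when $f_\omega(X)\in\cC_X^U$ for all $X$. Hence $\Abs{U}$ is precisely the set of directions taking all their values in the sets $\cC_X^U$, which is the inverse limit $\invLim\cC_X^U$; and since each $\cC_X^U$ inherits the (discrete) subspace topology from the discrete $\cC_X$, the subspace topology on $\Abs{U}\subset\Omega$ agrees with the inverse limit topology.

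Finally I would conclude by the general principle that an inverse limit of non-empty compact Hausdorff spaces over a directed poset is itself non-empty and compact. Finite discrete spaces are compact Hausdorff, so this applies to $\invLim\cC_X^U=\Abs{U}$, giving both assertions at once; compactness alternatively follows from Tychonoff's theorem, since the inverse limit is a closed subspace of the compact product $\prod_X\cC_X^U$.

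The one step that genuinely requires the hypothesis, rather than mere formal manipulation, is this last one. The ambient end space $\Omega=\invLim\cC_X$ need not be compact, so one cannot simply appeal to $\Abs{U}$ being a closed subset of $\Omega$; it is exactly the finiteness of each $\cC_X^U$ that makes the restricted system consist of compact spaces, and this is what simultaneously forces the limit to be non-empty and compact. Everything else is bookkeeping: checking that the $\cC_X^U$ are non-empty, that the bonding maps restrict, and that the two topologies on $\Abs{U}$ coincide.
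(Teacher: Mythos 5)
Your proof is correct and is essentially the paper's own argument: the paper likewise restricts the inverse system $\{\cC_X,\cb_{X',X},\cX\}$ to the finitely many components meeting $U$ (calling them $\cK_X$), identifies $\Abs{U}$ with the resulting inverse limit, and cites the standard fact that an inverse limit of non-empty compact Hausdorff spaces over a directed set is non-empty and compact. You have merely spelled out the bookkeeping (non-emptiness of the restricted fibres, restriction of the bonding maps, agreement of topologies) that the paper leaves implicit.
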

\begin{proof}
For every $X\in\cX$ let $\cK_X\subset\cC_X$ consist of the finitely many components of $G-X$ that meet $U$.
Then the closed subspace $\Abs{U}$ of the inverse limit $\Omega=\invLim{}\cC_X$ is non-empty and compact as inverse limit of its non-empty compact Hausdorff projections $\cK_X$, cf.~\cite[Corollary~2.5.7]{EngelkingBook}.
\end{proof}

The combination of topology and infinite graph theory is known as \emph{topological infinite graph theory} (an overview on this young field is presented in~\cite{RDsBanffSurvey,DiestelBook5}).
And in fact, Lemma~\ref{KXgiveCompactClosure} can be employed to deduce a well-known result of Diestel from this field,~\cite[Theorem~4.1]{VTopComp}, which states that a graph is compactified by its ends if and only if it is \emph{tough} in that deleting any finite set of vertices always leaves only finitely many components.
(If $G$ is tough and a covering of $G\sqcup\Omega$ with basic open sets is given, first apply Lemma~\ref{KXgiveCompactClosure} to $V$ to obtain a finite subcover $\cO$ of $\Omega$, then apply Lemma~\ref{KXgiveCompactClosure} to $U=V\setminus\bigcup\cO$ to deduce that $U$ is finite and, therefore, $G\setminus\bigcup\cO$ is compact.)

Since Lemma~\ref{KXgiveCompactClosure} yields combs even when there are both combs and stars (for example if $G$ is an infinite complete graph), this plus of control makes it a useful addition to the star-comb lemma.

\subsection{Critical vertex sets}

We have indicated above that adding the ends generally does not suffice to compactify a graph with the usual topologies. 

However, every graph is naturally compactified by its ends plus critical vertex sets, where a finite set $X$ of vertices of an infinite graph $G$ is \emph{critical} if the collection \[\CX:=\{\,C\in\cC_X\mid N(C)=X\,\}\] is infinite (cf.~\cite{EndsAndTangles,StoneCechTangles,EndsTanglesCrit}).
When $G$ is connected, all its critical vertex sets are non-empty, and so it follows that $G$ having a critical vertex set is stronger than $G$ containing an infinite star: On the one hand, given a critical vertex set $X$, each $x\in X$ sends an edge to each of the infinitely many components $C\in\CX$ and therefore is the centre of an infinite star.
On the other hand, if $G$ is obtained from a ray $R$ by completely joining its first vertex $r$ to all the other vertices of $R$, then $G$ contains an infinite star but no critical vertex set.

Let us say that a critical vertex set $X$ of $G$ lies \emph{in the closure} of $M$ where $M$ is either a subgraph of $G$ or a set of vertices of $G$, if infinitely many components in $\CX$ meet $M$.
The collection of all critical vertex sets of $G$ is denoted by $\crit(G)$.
The \emph{combinatorial remainder} of a graph $G$ is the disjoint union $\Gamma(G):=\Omega(G)\sqcup\crit(G)$.
As usual, $\Gamma=\Gamma(G)$, and $\AbsC{M}$ consists of those $\gamma\in\Gamma$ lying in the closure of $M$.
We obtain a slight strengthening of the star-comb lemma:

\begin{lemma}\label{combinatorialRemainder}
Let $G$ be any graph and let $U\subset V(G)$ be infinite.
Then at least one of the following assertions holds:
\begin{enumerate}
    \item $G$ has an end lying in the closure of $U$;
    \item $G$ has a critical vertex set lying in the closure of $U$.
\end{enumerate}
\end{lemma}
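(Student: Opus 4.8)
The plan is to split into two cases according to whether the hypothesis of Lemma~\ref{KXgiveCompactClosure} holds. First I would ask whether, for \emph{every} finite vertex set $X\in\cX$, only finitely many components of $G-X$ meet $U$. If so, Lemma~\ref{KXgiveCompactClosure} applies directly and tells us that $\Abs{U}$ is non-empty; any end in $\Abs{U}$ lies in the closure of $U$ by definition and so witnesses assertion~(i).

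Otherwise there is some finite $X\in\cX$ such that infinitely many components of $G-X$ meet $U$, and here I would produce a critical vertex set lying in the closure of $U$ by a pigeonhole argument. Let $\cD$ denote the infinite collection of components of $G-X$ that meet $U$. Each $C\in\cD$ satisfies $N(C)\subseteq X$, and since $X$ is finite there are only finitely many possible values for the neighbourhood $N(C)$; hence some fixed set $X'\subseteq X$ occurs as $N(C)=X'$ for infinitely many $C\in\cD$. I would then check that each such $C$ is in fact a component of $G-X'$ with neighbourhood exactly $X'$: since $N_G(C)=X'$, the connected set $C$ sends no edge outside $X'$, so it is a whole component of $G-X'$; and distinct components of $G-X$ remain vertex-disjoint and non-adjacent in $G-X'$ because $X'\subseteq X$. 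Consequently $X'$ is critical, infinitely many members of $\CC{X'}$ meet $U$, and therefore $X'$ lies in the closure of $U$, giving assertion~(ii).

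The routine part is the dichotomy and the direct application of Lemma~\ref{KXgiveCompactClosure}; the one step that needs care is the verification in the second case that the pigeonhole-selected components of $G-X$ are genuinely components of $G-X'$ whose neighbourhood equals $X'$, so that $X'$ qualifies as a \emph{critical} vertex set rather than merely a separator across which infinitely many components happen to meet $U$. Once this identification of components is established, the conclusion that $X'$ lies in the closure of $U$ is immediate from the definition of $\AbsC{U}$.
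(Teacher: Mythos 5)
Your proposal is correct and follows essentially the same route as the paper: the paper's proof uses exactly this dichotomy, invoking Lemma~\ref{KXgiveCompactClosure} in the first case and asserting in the second case that $X$ ``includes a critical vertex set'' with infinitely many of its components meeting $U$. Your pigeonhole argument on the neighbourhoods $N(C)\subseteq X$, together with the verification that the selected components are full components of $G-X'$, is precisely the detail the paper leaves implicit.
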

\begin{proof}
If there is a vertex set $X'\in\cX$ such that infinitely many components of $G-X'$ meet $U$, then $X'$ includes a critical vertex set $X$ such that infinitely many components in $\CX$ meet $U$, giving (ii).
Otherwise Lemma~\ref{KXgiveCompactClosure} gives~(i).
\end{proof}

% +++ Normal trees +++

\subsection{Normal trees}

A rooted tree $T\subset G$, not necessarily spanning, is said to be \emph{normal} in $G$ if the endvertices of every $T$-path in $G$ are comparable in the tree-order of $T$, \cite[p.~220]{DiestelBook5}. 
We say that a vertex set $W\subset V(G)$ is \emph{normally spanned} in $G$ if there is a normal tree in $G$ that contains~$W$.
A graph $G$ is \emph{normally spanned} if $V(G)$ is normally spanned, i.e., if $G$ has a normal spanning tree.

The \emph{generalised up-closure} $\guc{x}$ of a vertex $x\in T$ is the union of $\uc{x}$ with the vertex set of $\bigcup \cC(x)$, where the set $\cC(x)$ consists of those components of $G-T$ whose neighbourhoods meet $\uc{x}$.
Every graph $G$ reflects the separation properties of each normal tree $T\subset G$ (we generalise \cite[Lemma~1.5.5]{DiestelBook5} to possibly non-spanning normal trees):

\begin{lemma}\label{NTseparationAndComponents}
Let $G$ be any graph and let $T\subset G$ be any normal tree.
\begin{enumerate}
    \item Any two vertices $x,y\in T$ are separated in $G$ by the vertex set $\dc{x}\cap\dc{y}$.
    \item Let $W\subset V(T)$ be down-closed. Then the components of $G-W$ come in two types: the components that avoid $T$; and the components that meet $T$, which are spanned by the 
    sets $\guc{x}$ with $x$ minimal in $T-W$. 
\end{enumerate}
\end{lemma}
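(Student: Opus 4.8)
The plan is to derive both parts from two elementary consequences of normality. First, if $P$ is any path in $G$, then the vertices of $P$ lying in $T$, read off in the order in which they occur along $P$, have consecutive members comparable in the tree-order: the segment of $P$ between two consecutive such vertices is either an edge of $T$ or a $T$-path, and in either case its endvertices are comparable. Second, if $D$ is any component of $G-T$, then its neighbourhood $N(D)\subset V(T)$ is a \emph{chain} in the tree-order, since any two of its vertices are joined through $D$ by a $T$-path and hence are comparable. These two facts are the only places where normality enters.

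For (i) I would first dispose of the case that $x,y$ are comparable, where the smaller vertex already lies in $\dc{x}\cap\dc{y}$. Otherwise let $m$ be the greatest vertex of $\dc{x}\cap\dc{y}$; then $x$ and $y$ lie above \emph{distinct} children $c_x\neq c_y$ of $m$, so $x\in\uc{c_x}$ and $y\in\uc{c_y}$ with $\uc{c_x}\cap\uc{c_y}=\emptyset$. Given an $x$--$y$ path $P$ avoiding $\dc{x}\cap\dc{y}=\dc{m}$, I would track its tree-vertices $t_0=x,\dots,t_k=y$ and show by induction that every $t_i$ lies in $\uc{c_x}$: passing from $t_i\in\uc{c_x}$ to a comparable $t_{i+1}$, either $t_{i+1}\ge t_i$, so $t_{i+1}\in\uc{c_x}$, or $t_{i+1}<t_i$ lies on the root--$t_i$ path, all of whose vertices exceeding $m$ already lie in $\uc{c_x}$ while the rest lie in $\dc{m}$ and are excluded. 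This forces $y=t_k\in\uc{c_x}$, contradicting $y\in\uc{c_y}$, so $P$ must meet $\dc{x}\cap\dc{y}$.

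For (ii) I would use that, $W$ being down-closed, $T-W$ is an up-set and hence the disjoint union of the subtrees $\uc{x}$ over the minimal vertices $x$ of $T-W$, with $\uc{x}\cap W=\emptyset$. I would then verify four claims. \emph{(a)} Each $\guc{x}$ avoids $W$ and is connected (the subtree $\uc{x}$ together with the connected components in $\cC(x)$, each attached to $\uc{x}$), so it lies in one component of $G-W$. \emph{(b)} The sets $\guc{x}$ are pairwise disjoint: the subtrees are disjoint by minimality, and no $D\in\cC(x)\cap\cC(x')$ can occur, since neighbours of $D$ in $\uc{x}$ and in $\uc{x'}$ would be comparable and place $x,x'$ below a common vertex. \emph{(c)} Each $\guc{x}$ is \emph{exactly} a component of $G-W$: I would show every edge leaving $\guc{x}$ ends in $W$, distinguishing edges among tree-vertices (comparable by normality; a strictly smaller escaping endpoint lies below $x$, hence in $W$ by minimality) and edges meeting $G-T$ (which either stay inside some $D\in\cC(x)$, or reach a tree-vertex of $N(D)$ comparable to a witness in $\uc{x}$ and again below $x$). \emph{(d)} Every component of $G-W$ meeting $T$ contains a vertex of some $\uc{x}$ and hence equals $\guc{x}$ by (c). The components avoiding $T$ account for the other type, giving the dichotomy.

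I expect claim \emph{(c)} to be the main obstacle: showing that $\guc{x}$ is sealed off from the rest of $G-W$ is exactly where the non-spanning nature of $T$ bites, because one must simultaneously control the edges between tree-vertices and those routed through components of $G-T$. The chain property of $N(D)$ and the minimality of $x$ in $T-W$ are precisely what force every escaping edge into $W$; the difficulty is organisational—getting all the case distinctions right—rather than any single deep step.
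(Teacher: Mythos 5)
Your proof is correct, but it takes a genuinely different route for part (ii), which is the substance of the lemma. For (i) you and the paper rely on the same core fact---consecutive $T$-vertices along a path are comparable---but the paper simply cites Diestel's proof (which extracts a minimal comparable sequence and shows it is V-shaped, so its minimum lies in $\dc{x}\cap\dc{y}$), whereas you run an invariant argument tracking membership in $\uc{c_x}$; these are variants of one argument. The real divergence is in (ii): the paper derives it from (i). For a component $C$ of $G-W$ meeting $T$ with $x$ minimal in $C\cap T$, it applies (i) to conclude $C\cap T\subset\uc{x}$ (any vertex of $\dc{x}\cap\dc{y}$ on a path inside $C$ must equal $x$), deduces $C=G[\guc{x}]$, and then identifies minimal vertices of $C\cap T$ with minimal vertices of $T-W$ by a short parent argument (the parent of $x$ must lie in $W$, else it would join $C$). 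You instead work bottom-up: you decompose $T-W$ into the up-closures of its minimal vertices and verify directly that each $G[\guc{x}]$ is sealed off, i.e.\ every edge leaving $\guc{x}$ ends in $W$, using the chain property of $N(D)$ for components $D$ of $G-T$ together with the minimality of $x$ (your claims (b) and (c) are sound; the small glossed step---that an escaping endpoint $w<v$ with $v\in\uc{x}$ is comparable to $x$ because both lie in the chain $\dc{v}$, hence $w<x$---is easily filled). Your route costs a longer case analysis but buys self-containedness ((ii) no longer depends on (i)), makes explicit the two places normality enters, and gives the pairwise disjointness of the sets $\guc{x}$ explicitly; the paper's route is shorter precisely because reusing (i) replaces your entire edge analysis.
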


\begin{proof}
(i) The proof is that of~\cite[Lemma~1.5.5~(i)]{DiestelBook5}.

(ii) In a first step, we prove that if a component $C$ of $G-W$ meets $T$ and $x$ is minimal in $C\cap T$, then $C= G[\guc{x}]$. 
The backward inclusion holds because $\guc{x}$ is connected, avoids $W$ and contains $x$. 
The forward inclusion can be seen as follows. On the one hand, $C\cap T\subseteq \uc{x}$. 
Indeed, by (i), any $x$--$y$ path in $C$ with $y\in C\cap T$ contains a vertex below both $x$ and $y$ and every such vertex must be the minimal vertex $x$ itself.
On the other hand, $C- T\subseteq  \bigcup \cC(x)$. Indeed, every component $C'$ of $C- T$ is a component of $G-T$ since $W\subseteq T$, and by $C\cap T\subset\uc{x}$ each neighbour of $C'$ inside $C$ must be contained in $\uc{x}$.

Now let us deduce (ii). 
Without loss of generality $W$ is not empty.
To begin, we prove that each component $C$ of $G-W$ meeting $T$ is spanned by $\guc{x}$ for some minimal $x$ in $T-W$.
By the first step, it suffices to show that a minimal vertex $x$ of $C\cap T$ is also minimal in $T-W$, a fact that we verify as follows. The vertices below $x$ form a chain $\dc{t}$ in $T$. As $t$ is a neighbour of $x$, the maximality of $C$ as a component of $G-W$ implies that $t\in W$, giving $\dc{t}\subset W$ since $W$ is down-closed. Hence $x$ is also minimal in $T-W$.

Conversely, if $x$ is any minimal element of $T-W$, it is clearly also minimal in $C\cap T$ for the component $C$ of $G-W$ to which it belongs.
Together with the first step we conclude that $C$ is a component of $G-W$ meeting $T$ and spanned by~$\guc{x}$.
\end{proof}

As a consequence, the \emph{normal rays} of a normal spanning tree $T\subset G$, those that start at the root, reflect the end structure of $G$ in that every end of $G$ contains exactly one normal ray of $T$, \cite[Lemma~8.2.3]{DiestelBook5}.
More generally,

\begin{lemma}\label{NormalTreeNormalRay}
If $G$ is any graph and $T\subset G$ is any normal tree,
then every end of $G$ in the closure of $T$ contains exactly one normal ray of $T$.
Moreover, sending these ends to the normal rays they contain defines a bijection between $\Abs{T}$ and the normal rays of~$T$.
\end{lemma}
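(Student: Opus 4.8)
The plan is to reformulate the statement as the claim that the map $\Phi$ sending each end $\omega\in\Abs{T}$ to a normal ray contained in $\omega$ (a \emph{normal ray} being a ray of $T$ starting at the root) is a well-defined bijection onto the set of normal rays; the ``exactly one'' assertion and the bijection then follow together. Indeed, once I know that every $\omega\in\Abs{T}$ contains a normal ray (\emph{existence}) and that no $\omega$ contains two (\emph{uniqueness}), the map $\Phi$ is well defined, it is injective because $\Phi(\omega)=\Phi(\omega')=R$ forces $\omega=\omega_R=\omega'$, and it is surjective because each normal ray $R$ lies in $T$, so its end $\omega_R$ meets $T$ in every $C(X,\omega_R)$ and thus satisfies $\omega_R\in\Abs{T}$ with $\Phi(\omega_R)=R$. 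Uniqueness is the easy half: two distinct normal rays $R,R'$ agree up to some vertex and then continue into distinct children $a$ and $b$ of it, so by Lemma~\ref{NTseparationAndComponents}(i) any vertex of $R$ past $a$ and any vertex of $R'$ past $b$ are separated in $G$ by the fixed finite set $\dc{a}\cap\dc{b}$; hence $R$ and $R'$ are inequivalent and lie in different ends.

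For existence I would, given $\omega\in\Abs{T}$, build an increasing sequence $x_0<x_1<\cdots$ in $T$ recursively, starting at the root $x_0$ and maintaining the invariant $C(\dc{x_n}\setminus\{x_n\},\omega)=G[\guc{x_n}]$ (which holds at the root, since there the separator is empty and $\guc{x_0}=V(G)$). Given $x_n$, I apply Lemma~\ref{NTseparationAndComponents}(ii) to the finite down-closed set $\dc{x_n}$: because $\omega\in\Abs{T}$, the component $C(\dc{x_n},\omega)$ meets $T$ and hence equals $G[\guc{x_{n+1}}]$ for some $x_{n+1}$ minimal in $T-\dc{x_n}$. Since $G[\guc{x_{n+1}}]\subseteq G[\guc{x_n}]$, the vertex $x_{n+1}$ lies in $\uc{x_n}$, and minimality then forces its parent to be $x_n$; thus $x_{n+1}$ is a child of $x_n$ and the invariant is reinstated. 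The sequence $R_\omega:=x_0x_1x_2\cdots$ is then a normal ray.

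What remains, and what I expect to be the crux, is to certify that $R_\omega$ actually lies in $\omega$. This is delicate precisely because $T$ need not be spanning: the rays of $\omega$ may run through components of $G-T$ attached to $T$ arbitrarily high up rather than through $T$ itself, so $R_\omega$ and $\omega$ can look as though they sit in different places. To bridge this I would use that $\omega\in\Abs{T}$ supplies a comb attached to $T$ with spine $S\in\omega$ and teeth $t^1,t^2,\dots\in V(T)$. For each fixed $n$, all but finitely many of the disjoint tooth-paths avoid the finite separator $\dc{x_n}\setminus\{x_n\}$ and meet the tail of $S$ inside $G[\guc{x_n}]$, so they lie entirely in $G[\guc{x_n}]$ and their teeth land in $\uc{x_n}$; hence cofinitely many teeth satisfy $t^i\ge x_n$. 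As $\bigcap_n\uc{x_n}=\emptyset$, this says exactly that the teeth converge, inside the tree $T$, to the end of $T$ determined by $R_\omega$, so $T$ contains a comb with spine $S'$ equivalent to $R_\omega$ and with teeth among the $t^i$. Finally $S$ and $S'$ share infinitely many teeth, so routing through all but finitely many of these common teeth (avoiding any prescribed finite vertex set) shows $S\sim S'$ and therefore $\omega=\omega_S=\omega_{S'}=\omega_{R_\omega}$. By contrast, the recursion and the uniqueness argument are routine applications of Lemma~\ref{NTseparationAndComponents}.
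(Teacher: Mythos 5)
Your proof is correct and follows essentially the same route as the paper's: uniqueness of the normal ray via Lemma~\ref{NTseparationAndComponents}(i), existence by the same recursion that repeatedly applies Lemma~\ref{NTseparationAndComponents}(ii) to the component of $G-\dc{x_n}$ in which $\omega$ lives, and the bijection from the observation that every normal ray's end lies in $\Abs{T}$. The only difference is one of detail: where the paper compresses into a single sentence the claim that the constructed ray is equivalent to the comb's spine, you supply the full argument (cofinitely many teeth above every $x_n$, a second comb inside $T$ with spine $R_\omega$, and the shared-teeth routing) — a step that, as you rightly note, genuinely requires proof when $T$ is not spanning.
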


\begin{proof}
Let $\omega$ be any end of $G$ in the closure of $T$.
By Lemma~\ref{NTseparationAndComponents}~(i) at most one normal ray of $T$ is contained in $\omega$, and so it remains to find a normal ray of $T$ that lies in $\omega$.
For this, we pick a comb in $G$ \at $T$ with its spine in $\omega$.
We construct a normal ray of $T$ in $\omega$, as follows.

Starting with the root $v_0$ of $T$, recursively choose nodes $v_0,v_1,v_2,\ldots$ of $T$ such that $v_{n+1}$ is the minimal vertex
of $T-\dc{v_n}$ for which $\guc{v_{n+1}}$ spans
the component of $G-\dc{v_n}$ that contains all but finitely many vertices of the comb. Such a vertex $v_{n+1}$ exists by Lemma~\ref{NTseparationAndComponents} (ii). 
And it is an upward neighbour of $v_n$, which can be seen by applying Lemma~\ref{NTseparationAndComponents} (i) to $v_n$ and $v_{n+1}$. 
In conclusion $v_0v_1v_2\ldots$ is a normal ray of $T$ that is equivalent in $G$ to the spine of the comb. 

The `moreover' part holds as every normal ray of $T$ has its end in $G$ contained in the closure of $T$.
\end{proof}

Consequently, if $G$ contains a comb \at $T$, then $T$ contains exactly one normal ray that is equivalent in $G$ to that comb's spine.

\begin{lemma}\label{NormalTreeNormalCriticalVertexSet}
Let $G$ be any graph and let $T\subset G$ be any normal tree.
Then every critical vertex set of $G$ in the closure of $T$ is contained in $T$ as a chain.
\end{lemma}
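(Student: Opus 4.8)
The plan is to fix a critical vertex set $X$ of $G$ lying in the closure of $T$ and to choose an infinite family $\{C_i\}_{i\in\N}$ of components in $\CX$ that meet $T$; recall that $N(C_i)=X$ for every $i$ and that $X$ is finite, being critical. I will establish the two halves of the statement separately: that $Y:=X\cap V(T)$ is a chain in the tree-order of $T$, and that $X\subseteq V(T)$ (whence $Y=X$, and the two halves combine to the claim).

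For the chain property, suppose towards a contradiction that $a,b\in Y$ are incomparable. By Lemma~\ref{NTseparationAndComponents}(i) they are separated in $G$ by $Z:=\dc a\cap\dc b$, which is the down-closure of their meet, hence a finite chain containing neither $a$ nor $b$. Since $a,b\in X=N(C_i)$, each $C_i$ is adjacent to both $a$ and $b$, so an $a$--$C_i$ edge, a path through the connected set $C_i$, and a $C_i$--$b$ edge concatenate to an $a$--$b$ path whose interior lies in $C_i$. This path must meet the separator $Z$, forcing $Z\cap C_i\neq\emptyset$. As the $C_i$ are pairwise disjoint while $Z$ is finite, this is impossible for infinitely many $i$, and the contradiction shows that $Y$ is a chain.

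For $X\subseteq V(T)$, I first record that $C_i\cap T$ has a least element $x_i$ with $C_i\cap T\subseteq\uc{x_i}$: applying Lemma~\ref{NTseparationAndComponents}(i) to $x_i$ and any $t\in C_i\cap T$, the separator $\dc{x_i}\cap\dc t$ meets the connected set $C_i$ in a vertex $s\le x_i$ of $T$, so minimality of $x_i$ gives $s=x_i\le t$. For all but at most one $i$ (the one, if any, whose $x_i$ is the root), the parent $x_i^-$ lies in $T$; being adjacent to $x_i\in C_i$ yet strictly below the minimal vertex $x_i$, it cannot lie in $C_i$, so $x_i^-\in X\cap V(T)=Y$. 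As $Y$ is finite, the pigeonhole principle yields an infinite subfamily whose $x_i$ share a common parent $p\in Y$; these $x_i$ are therefore pairwise incomparable siblings, and the up-closures $\uc{x_i}$ are pairwise disjoint.

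Now suppose some $v\in X$ lies outside $V(T)$, say in the component $D$ of $G-T$. The auxiliary fact I use is that $N(D)$ is a chain in $T$: two incomparable neighbours of $D$ would be joined by a $T$-path through $D$, contradicting normality. For each $i$ in the subfamily, $v\in X=N(C_i)$ provides a neighbour $y_i\in C_i$ of $v$. If $y_i\in T$, then $y_i\in C_i\cap T\subseteq\uc{x_i}$ and $y_i\in N(D)$; otherwise $y_i\in D\cap C_i$, and the first vertex $w_i\in T$ along a $y_i$--$x_i$ path inside $C_i$ satisfies $w_i\in N(D)$ and $w_i\in C_i\cap T\subseteq\uc{x_i}$. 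In either case I obtain a vertex of $N(D)$ inside $\uc{x_i}$; picking two indices $i\neq j$ and using that $\uc{x_i}$ and $\uc{x_j}$ are disjoint, these two vertices are incomparable, contradicting that $N(D)$ is a chain. Hence $X\subseteq V(T)$, and with the first part $X$ is a chain in $T$. I expect the second half to be the main obstacle: a vertex of $X$ could a priori hide inside a component $D$ of $G-T$, and excluding this requires combining the chain-shaped neighbourhood $N(D)$ with the disjointness of the cones $\uc{x_i}$ secured by the common-parent reduction, whereas the chain property of $Y$ and the finiteness of $X$ are comparatively routine.
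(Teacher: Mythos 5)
Your proof is correct and complete --- indeed it is more explicit than the paper's about edge cases, such as the at most one component of $\CX$ containing the root. The two routes overlap in their first half: your common parent $p$ of the least elements $x_i$ of the sets $C_i\cap T$ plays exactly the role of the paper's star centre $x$, which the paper obtains by picking, for each component $C\in\CX$ meeting $T$, a $C$--$X$ edge of $T$ and applying the pigeonhole principle to the endpoints in $X$. After that the arguments genuinely diverge. The paper finishes in one stroke by proving $X\subseteq\dc{x}$: by Lemma~\ref{NTseparationAndComponents}~(i) the finite chain $\dc{x}$ pairwise separates the star's leaves, hence the components $C_i$ avoiding $\dc{x}$ lie in distinct components of $G-\dc{x}$; but every vertex $v\in X$ gives a path of length two between any two of the $C_i$, so no $v\in X$ can lie in a component of $G-\dc{x}$ at all. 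This yields containment in $T$ and chain-ness simultaneously, and even the slightly stronger conclusion that $X$ lies below the star centre. You instead split the statement into two halves: chain-ness of $X\cap V(T)$ via the separators $\dc{a}\cap\dc{b}$ and the disjointness of the $C_i$, and containment $X\subseteq V(T)$ via the fact that the neighbourhood of a component $D$ of $G-T$ is a chain --- a consequence of normality that the paper's proof never needs, since it never looks at $G-T$. Your split is longer but modular, and each half is a clean application of normality; the paper's path-of-length-two observation is the shortcut that collapses both halves into a single contradiction.
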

\begin{proof}
Let $X$ be any critical vertex set of $G$ that lies in the closure of $T$.
For every component $C\in\CX$ that meets $T$, pick a $C$--$X$ edge from $T$. 
By the pigeonhole principle, some infinitely many of these edges have the same endpoint $x\in X$, giving rise to an infinite star in $T$.
Then, by Lemma~\ref{NTseparationAndComponents}, $\dc{x}$ pairwise separates all the leaves of the star above $x$ at once; let us write $L$ for the set of these leaves.
Since $\dc{x}$ is finite, all but finitely many of the infinitely many components in $\CX$ that meet $L$ are also components of $G-\dc{x}$. And every vertex from $X$ defines at least one path of length two between distinct such components, by the definition of critical vertex sets. Therefore, no vertex in $X$ can be contained in a component of $G-\dc{x}$; in other words, $X$ is contained in the chain $\dc{x}$.
\end{proof}

\subsection{Containing vertex sets cofinally}
Recall that a subset $X$ of a poset $P=(P,{\le})$ is \emph{cofinal} in $P$, and ${\le}$, if for every $p\in P$ there is an $x\in X$ with $x\ge p$.
We say that a rooted tree $T\subset G$ contains a set $W$ \emph{cofinally} if $W\subset V(T)$ and $W$ is cofinal in the tree-order of $T$.
Interestingly, our next lemma does not require $T$ to be normal.

\begin{lemma}\label{NTcombinatorialClosureCofinal}
Let $G$ be any graph.
If $T\subset G$ is a rooted tree that contains a vertex set $W$ cofinally, then $\AbsC{T}=\AbsC{W}$.
\end{lemma}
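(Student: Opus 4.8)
The plan is to prove the two inclusions $\AbsC{W}\subseteq\AbsC{T}$ and $\AbsC{T}\subseteq\AbsC{W}$ separately. The first is immediate from the monotonicity of the closure operator in its set argument: since $W\subseteq V(T)$, every component of $G-X$ that meets $W$ also meets $T$. Hence any point of $\AbsC{W}$ lies in $\AbsC{T}$: an end $\omega\in\AbsC{W}$ has $C(X,\omega)$ meeting $W$, and so meeting $T$, for every $X\in\cX$; and a critical vertex set in $\AbsC{W}$ has infinitely many components of its collection meeting $W$, and so meeting $T$. All the work therefore goes into the reverse inclusion, where the cofinality of $W$ in the tree-order of $T$ is used.

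For the reverse inclusion the central device is a \emph{climbing} argument: given a vertex $t\in V(T)$ lying in some relevant component of $G-X$, I will ascend from $t$ in the tree-order to a vertex $w\in W$ with $w\ge t$, which exists by cofinality, and argue that the $t$--$w$ path in $T$ stays inside that component. The obstruction is that this monotone path, although contained in $T$, might cross the finite separator $X$. To rule this out I will separate not with $X$ but with the enlarged finite set $X^{*}:=X\cup\bigcup\{\,\dc{x}\mid x\in X\cap V(T)\,\}$, which is still finite because each $\dc{x}$ is a finite chain. The point of the enlargement is that if a tree vertex $t$ avoids $X^{*}$, then no vertex of $X\cap V(T)$ lies above $t$ in the tree-order; consequently the entire up-segment $\uc{t}\cap\dc{w}$ from $t$ to $w$ avoids $X$, so it is a connected subgraph of $G-X$ meeting the component containing $t$ and hence lies in that component, placing $w\in W$ in it.

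I will then run this device in the two cases that make up $\Gamma=\Omega\sqcup\crit(G)$. If $\omega$ is an end in the closure of $T$, then for each $X\in\cX$ the component $C(X^{*},\omega)\subseteq C(X,\omega)$ still meets $T$; picking $t\in C(X^{*},\omega)\cap V(T)$ and climbing yields $w\in W\cap C(X,\omega)$, so $C(X,\omega)$ meets $W$ for every $X$ and thus $\omega\in\AbsC{W}$. If instead $Y$ is a critical vertex set in the closure of $T$, so that infinitely many components in $\CC{Y}$ meet $T$, then the finite set $Y^{*}$ meets only finitely many of them; for each of the remaining infinitely many such components $C$, any $t\in C\cap V(T)$ avoids $Y^{*}$, and climbing produces $w\in W\cap C$. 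Hence infinitely many components in $\CC{Y}$ meet $W$, giving $Y\in\AbsC{W}$. I expect the only genuine difficulty to be the separator-crossing issue just described; the enlargement to $X^{*}$ resolves it, and everything else is bookkeeping over the two types of points of the combinatorial remainder.
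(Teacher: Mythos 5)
Your proof is correct, and it is half the paper's argument and half a genuinely different one. For the ends, your device is literally the paper's: your $X^{*}$ coincides with the set $X'=X\cup\lceil X\cap V(T)\rceil_T$ used there, and your direct climbing argument is just the contrapositive of the paper's (which picks $X$ separating $W$ from $\omega$ and shows that $X'$ separates $T$ from $\omega$, using that the up-closure of any tree vertex outside $X'$ avoids $X'$ and, by cofinality, contains a vertex of $W$). Where you genuinely diverge is the critical vertex sets. The paper handles $Y\in\AbsC{T}\cap\crit(G)$ by picking a vertex $u(C)$ of $T$ in each component $C\in\CC{Y}$ meeting $T$, applying the star-comb lemma inside $T$ to these vertices (a comb is impossible because the finite set $Y$ pairwise separates them), and then analysing the resulting star: its centre has infinitely many pairwise incomparable leaves $u(C)$ above it, cofinality yields $w(C)\in W$ above each, and a disjoint-paths pigeonhole argument discards the finitely many $C$ whose climbing path could meet $Y$. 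You instead re-use the same enlargement $Y^{*}=Y\cup\lceil Y\cap V(T)\rceil_T$: since $Y^{*}$ is finite it meets only finitely many of the pairwise disjoint components of $G-Y$, and in every other component $C\in\CC{Y}$ meeting $T$ your climbing path from $t\in C\cap V(T)$ avoids $Y$ (a vertex of $Y\cap V(T)$ above $t$ would force $t\in Y^{*}$) and hence stays in $C$, so $C$ meets $W$. This gives a cleaner, unified treatment of both halves of $\Gamma=\Omega\sqcup\crit(G)$ with a single device and avoids invoking the star-comb lemma; what the paper's route buys in exchange is explicit structure (an infinite subdivided star in $T$ whose leaves lie in distinct components of $\CC{Y}$), which fits the theme of the series but is not needed for the statement itself.
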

\begin{proof}
We first prove that $\Abs{T}=\Abs{W}$.
The backward inclusion $\Abs{T}\supseteq \Abs{W}$ holds as $T$ contains $W$. 
For the forward inclusion we prove equivalently that every end of $G$ that is not contained in the closure of $W$ also does not lie in the closure of $T$. 
So consider any end $\omega\in \Omega\setminus \Abs{W}$, and pick a finite vertex set $X\subseteq V(G)$ separating $W$ from $\omega$. 
We claim that the finite set $X'$ consisting of the vertices in $X$ and all vertices in the down-closure of $X\cap V(T)$ in $T$, i.e. $X':=X\cup\lceil X\cap V(T)\rceil_T$, separates $T$ from $\omega$. 
Indeed, suppose for a contradiction that the component $C:=C(X',\omega)$ of $G-X'$ meets $T$. 
Consider a vertex $v\in C\cap T$. 
As $X'\cap V(T)$ is down-closed in $T$, the up-closure $\lfloor v \rfloor_T$ is included in $C$. 
Hence---as $T$ contains $W$ cofinally---the component $C$ also contains a vertex from $W$, contradicting the assumption that $X\subseteq X'$ separates $W$ from $\omega$.

It remains to show that $\AbsC{T}$ and $\AbsC{W}$ coincide on $\crit(G)$.
From $W\subset T$ we infer $\AbsC{W}\subset\AbsC{T}$, so it suffices to show that every critical vertex set that lies in the closure of $T$ does also lie in the closure of $W$.
For this, let any critical vertex set $X\in\AbsC{T}$ be given.
We pick, for every component $C\in\CX$ meeting $T$, a vertex $u(C)$ of $T$ in~$C$.
Then applying the star-comb lemma in $T$ to this infinite vertex set yields either a star or a comb \at it.
Since the finite vertex set $X$ pairwise separates every two vertices in the attachment set at once, we in fact get a star.
Consider the centre of the star.
This is a vertex of $T$ that has infinitely many pairwise incomparable vertices $u(C)$ above it.
Using that $T$ contains $W$ cofinally, we find a vertex $w(C)$ in $T\cap W$ above every~$u(C)$.
As $X$ is finite, we may assume without loss of generality that every vertex $w(C)$ is contained in~$C$.
Then $X$ lies in the closure of the vertex set formed by the vertices $w(C)$, and hence $X\in\AbsC{W}$ follows.
\end{proof}

% +++ Tree-decompositions and S-trees +++

\subsection{Tree-decompositions and \texorpdfstring{$\boldsymbol{S}$}{S}-trees}

We assume familiarity with \cite[Section~12.3]{DiestelBook5} up to but not including Lemma~12.3.2, and with the concepts of oriented separations and $S$-trees for $S$ a set of separations of a given graph as presented in~\cite[Section~12.5]{DiestelBook5}.
Whenever we introduce a tree-de\-com\-po\-si\-tion as $(T,\cV)$ we tacitly assume that $\cV=(V_t)_{t\in T}$.
Usually we refer to the adhesion sets of a tree-decomposition as separators.
We call a tree-decomposition \emph{rayless} and \emph{locally finite} if the decomposition tree $T$ is rayless and locally finite, respectively.
A \emph{star-decomposition} is a tree-decomposition whose decomposition-tree is a star~$K_{1,\kappa}$ for some cardinal~$\kappa$.
A \emph{rooted} tree-decomposition is a tree-decomposition $(T,\cV)$ where $T$ is rooted.
We say that a rooted tree-decomposition $(T,\cV)$ of $G$ \emph{covers} a vertex set $U\subset V(G)$ \emph{cofinally} if the set of nodes of $T$ whose parts meet $U$ is cofinal in the tree-order of $T$.

We will need the following standard facts about tree-decompositions:

\begin{lemma}[{\cite[Lemma~12.3.1]{DiestelBook5}}]\label{TDCseps}
Let $G$ be any graph with a tree-decomposition $(T,\cV)$ and let $t_1t_2$ be any edge of $T$ and let $T_1,T_2$ be the components of $T-t_1t_2$, with $t_1\in T_1$ and $t_2\in T_2$.
Then $V_{t_1}\cap V_{t_2}$ separates $A_1:=\bigcup_{t\in T_1}V_t$ from $A_2:=\bigcup_{t\in T_2}V_t$ in $G$.
\end{lemma}

\begin{corollary}\label{connectedSubgraphTDC}
Let $(T,\cV)$ be any tree-decomposition of any graph $G$.
If a connected subgraph $H\subset G$ avoids a part $V_t$, then there is a unique component $T'$ of $T-t$ with $H\subset\bigcup_{t'\in T'}G[V_{t'}]$ and $H$ avoids every part that is not at a node of the component~$T'$.\qed
\end{corollary}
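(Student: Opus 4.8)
The plan is to exploit the defining property of a tree-decomposition that each vertex occupies a connected set of nodes, together with the connectedness of $H$. Assume $H$ is non-empty, the statement being vacuous otherwise. For a vertex $v\in V(H)$ write $T_v:=\{\,r\in T : v\in V_r\,\}$; this is a subtree of $T$ by the tree-decomposition axioms. Since $H$ avoids the part $V_t$ we have $v\notin V_t$, hence $t\notin T_v$, and so the connected set $T_v$ is contained in a single component of $T-t$.

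First I would show that the subtrees $T_v$ for $v\in V(H)$ all lie in one and the same component of $T-t$. For an edge $uv\in E(H)$ there is, again by the tree-decomposition axioms, a node $r$ with $u,v\in V_r$; then $r\in T_u\cap T_v$ and $r\neq t$, so the subtrees $T_u$ and $T_v$ both meet, and hence are contained in, the component of $T-t$ that contains $r$. As $H$ is connected, propagating this coincidence along a walk through the edges of $H$ yields a single component $T'$ of $T-t$ with $T_v\subseteq T'$ for every $v\in V(H)$.

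With $T'$ fixed, both assertions follow. Every vertex $v\in V(H)$ lies in $V_r$ for some $r\in T_v\subseteq T'$, and every edge of $H$ lies in $G[V_r]$ for such an $r$ by the common-node axiom, so $H\subset\bigcup_{t'\in T'}G[V_{t'}]$. Conversely, were $H$ to meet a part $V_r$ with $r\notin T'$, then $r\in T_v$ for some $v\in V(H)$, contradicting $T_v\subseteq T'$ (note $r\neq t$ since $H$ avoids $V_t$); thus $H$ avoids every part not at a node of $T'$. Uniqueness of $T'$ is immediate, as $T'$ must contain $T_v$ for any chosen $v\in V(H)$.

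Alternatively, one can argue directly through Lemma~\ref{TDCseps}: for each neighbour $s$ of $t$ the separator $V_t\cap V_s\subseteq V_t$ is avoided by $H$, so the connected subgraph $H$ lies on one side of the separation induced by the edge $ts$, and selecting the sides consistently over all neighbours of $t$ singles out $T'$. In either approach the only point requiring care is to verify that the \emph{edges} of $H$, and not merely its vertices, are captured by the parts at nodes of $T'$; this is settled at once by the axiom furnishing, for each edge, a part containing both of its endvertices.
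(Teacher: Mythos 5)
Your proof is correct and complete. It is worth noting how it sits relative to the paper: the paper gives no written proof at all---the statement is presented as an immediate corollary of Lemma~\ref{TDCseps}, with the proof omitted---so the ``official'' route is the one you only sketch at the end, via the separations induced by the edges of $T$ at $t$. Your primary argument is genuinely different and more elementary: it works directly from the tree-decomposition axioms, using that each set $T_v=\{\,r\in T : v\in V_r\,\}$ induces a subtree of $T$ avoiding $t$, that the edge axiom makes $T_u$ and $T_v$ intersect for every edge $uv\in E(H)$, and that connectedness of $H$ then forces all these subtrees into a single component $T'$ of $T-t$. What this buys you is a self-contained proof that handles the vertices and edges of $H$ uniformly and invokes no separation lemma; what the lemma-based route buys is brevity, at the cost of two details your sketch glosses over: that $H$ cannot lie on the far side of the edges $ts$ for two distinct neighbours $s$ of $t$ (any vertex lying in parts from two different branches of $T-t$ would have to lie in $V_t$, which $H$ avoids), and the check you yourself flag, that the edges of $H$, not merely its vertices, are captured by parts at nodes of $T'$. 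Both routes are sound.
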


A tree-decomposition $(T,\cV)$ makes $T$ into an $S$-tree for the set $S$ of separations it induces, cf.~\cite{DiestelBook5}.
The converse is true, for example if $T$ is rayless, but false in general (it is no longer clear that every vertex of $G$ lives in some part if $T$ contains a ray).
By a simple distance argument, however, the converse holds in a special case for which we need the following definition.
Suppose that $(T,\alpha)$ is an $S$-tree with $T$ rooted in $r\in T$.
We say that the separators of $(T,\alpha)$ are \emph{upwards disjoint} if for every two edges $\ve<\vf$ pointing away from the root $r$ the separators of $\alpha(\ve)$ and $\alpha(\vf)$ are disjoint.
Here, $\ve=(e,s,t)$ \emph{points away from}~$r$ if $r\le _T s<_T t$, i.e., if $s\in rTt$.
Then every $S$-tree with upwards disjoint separators induces a tree-de\-com\-po\-si\-tion.

We use the following non-standard notation for $S$-trees $(T,\alpha)$: for an edge $xy=e$ of the decomposition tree $T$ we abbreviate $\alpha(e,x,y)=\alpha(x,y)$.

\subsection{Tree-decompositions and \texorpdfstring{$\boldsymbol{S}$}{S}-trees displaying sets of ends}\label{subsec:TDCsDisplayDef}

In this section we give a brief summary of how the ends of $G$ relate to the decomposition trees of tree-decompositions and $S$-trees. 
For the sake of readability, we introduce all needed concepts for $S$-trees and let the tree-decompositions inherit these concepts from their corresponding $S$-trees.

Let $(T,\alpha)$ be any $\Sinf$-tree.
If $\omega$ is an end of $G$, then $\omega$ orients every finite-order separation $\{A,B\}\in\Sinf$ of $G$ towards the side $K\in \{A,B\}$ for which every ray in $\omega$ has a tail in $G[K]$.
In this way, $\omega$ induces a consistent orientation of $\vSinf$ and, via $\alpha$, also induces a consistent orientation $O$ of $\vE(T)$.
Then $\omega$ either \emph{lives} at a unique node $t\in T$ in that the star $\vF_{\! t}=\{\,(e,s,t)\in\vE(T)\mid e=st\in T\,\}$ at $t$ is included in $O$, or \emph{corresponds} naturally to a unique end $\eta$ of $T$ in that for some (equivalently:\ every) ray $t_1t_2\ldots$ in $\eta$ all oriented edges $(t_n t_{n+1}, t_n, t_{n+1})$ are contained in $O$.
When $(T,\alpha)$ corresponds to a tree-decomposition $(T,\cV)$ and $\omega$ lives at $t$, then we also say that $\omega$ \emph{lives} in the part $V_t$ at $t$.
Moreover, we remark that $\omega$ lives in $V_t$ if and only if some (equivalently:\ every) ray in $\omega$ has infinitely many vertices in $V_t$.
Likewise, $\omega$ corresponds to $\eta$ if and only if some (equivalently:\ every) ray $R\in\omega$ follows the course of some (equivalently:\ every) ray $W\in\eta$ (in that for every tail $W'\subset W$ the ray $R$ has infinitely many vertices in $\bigcup_{t\in W'}V_t$).
In both cases `having infinitely many vertices in' cannot be replaced with `having a tail in', e.g.\ consider decomposition trees that are infinite stars or combs whose teeth avoid their spines.

Consider the map $\tau\colon\Omega(G)\to\Omega(T)\sqcup V(T)$ that takes each end of $G$ to the end or node of $T$ which it corresponds to or lives at respectively.
This map essentially captures how the ends of $G$ relate to the ends of $T$. 
We say that $(T,\alpha)$ \emph{displays} a set of ends $\Psi\subset\Omega(G)$ if $\tau$ restricts to a bijection $\tau\rest\Psi\colon\Psi\to\Omega(T)$ between $\Psi$ and the end space of $T$  and maps every end that is not contained in $\Psi$ to some node of~$T$.

It is a natural and largely open question for which subsets $\Psi\subset\Omega(G)$ a graph $G$ has a tree-decomposition $(T,\cV)$ that displays $\Psi$.
Only recently, Carmesin achieved a major breakthrough by providing a positive answer for $\Psi$ the set of undominated ends of $G$.
In order to state his result in its full strength, we introduce two more definitions and motivate them in a lemma.

Suppose that $T$ is rooted in $r\in T$, that $G$ is connected and that the separators of $(T,\alpha)$ are non-empty. 
%Let us say that the separators of $(T,\alpha)$ are \emph{upwards disjoint} if 
%for every two edges $\ve<\vf$ pointing away from the root $r$ the separators of $\alpha(\ve)$ and $\alpha(\vf)$ are disjoint.
If the finite separators of $(T,\alpha)$ are upwards disjoint, then by the star-comb lemma and a simple distance argument, every end of $T$ has some ends of $G$ corresponding to it (i.e.\ $\tau^{-1}(\eta)\neq\emptyset$ for every end $\eta$ of $T$).
And if additionally $(T,\alpha)$ is \emph{upwards connected} in that for every edge $\ve$ pointing away from the root $r$ the induced subgraph $G[B]$ stemming from $(A,B)=\alpha(\ve)$ is connected, then $T$ already displays the set of those ends of $G$ that correspond naturally to ends of $T$ (i.e.\ $\vert\tau^{-1}(\eta)\vert=1$ for every end $\eta$ of $T$):

\begin{lemma}\label{DisplayLemma}
Let $G$ be any graph.
Every upwards connected rooted $\Sinf$-tree $(T,\alpha)$ with upwards disjoint non-empty separators displays the ends of $G$ that correspond to the ends of $T$.
\end{lemma}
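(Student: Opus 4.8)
The plan is to unwind the definition of \emph{displaying}: writing $\Psi:=\tau^{-1}(\Omega(T))$ for the set of ends of $G$ that correspond to ends of $T$, I must show that $\tau\rest\Psi\colon\Psi\to\Omega(T)$ is a bijection and that every end not in $\Psi$ is sent to a node. The latter is immediate, since by construction $\tau$ takes values in $\Omega(T)\sqcup V(T)$ and an end outside $\Psi$ has image outside $\Omega(T)$ by definition of $\Psi$. So the whole content is that $\tau$ restricts to a bijection onto $\Omega(T)$, i.e.\ that $\tau^{-1}(\eta)$ is a singleton for each end $\eta$ of $T$. Fix such an $\eta$ and let $t_0t_1t_2\ldots$ be the ray from the root $r=t_0$ to $\eta$, so that each edge $e_n=t_nt_{n+1}$ points away from $r$. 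Write $(A_n,B_n)=\alpha(t_n,t_{n+1})$ with $B_n$ the far side and $S_n=A_n\cap B_n$ its separator: the $S_n$ are finite, non-empty, and pairwise disjoint by upwards disjointness, while the far sides nest as $B_0\supseteq B_1\supseteq\cdots$. The \emph{distance argument} I will record first is that $\bigcap_n B_n$ contains no vertex: since the separators are upwards disjoint they induce a tree-decomposition, so each vertex $v$ lies in some part $V_t$, and once the ray passes $t$ the vertex $v$ sits on the near side $A_n$ and eventually leaves every $S_n$, hence $v\notin B_n$ for all large $n$. As the $B_n$ nest, this shows each vertex lies in only finitely many $B_n$.

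For \textbf{surjectivity} I would invoke upwards connectedness for $e_0$ to see that $G[B_0]$ is connected, choose $s_n\in S_n$, and apply the star--comb lemma in $G[B_0]$ to the infinite set $W:=\{s_n\}$. The key point is to exclude the star. If there were a star with centre $c$, then by the distance argument $c\notin B_N$ for some $N$; every path to a leaf $s_{n_k}$ with $n_k\ge N$ would then run from the near side of $S_N$ to the far side $B_N\setminus S_N$ (the leaf avoids $S_N$ by disjointness) and so must meet the finite set $S_N$, contradicting that infinitely many of these paths are disjoint off $c$. Hence the star--comb lemma returns a comb with spine $R$ and infinitely many teeth $s_{n_k}$, $n_k\to\infty$. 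For each fixed $m$, all but finitely many tooth-paths must have their spine-attachment already inside $B_m$ (again because a crossing path would meet the finite $S_m$, and the tooth-paths are disjoint), so the spine has infinitely many vertices in $B_m\setminus S_m$; meeting $S_m$ only finitely often, the spine then has a tail in $G[B_m]$. Thus the end $\omega$ of $R$ has a tail in every $G[B_m]$, which is exactly the statement that $\omega$ corresponds to $\eta$. This comb extraction is the main obstacle, and I expect the delicate bookkeeping to be the disjointness-to-finiteness crossing argument that both kills the star and pushes the spine into each $B_m$.

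For \textbf{injectivity} I would use upwards connectedness directly. Suppose $\omega,\omega'$ both correspond to $\eta$, so each has a ray with a tail in every $G[B_m]$. Given any finite $X\subset V(G)$, the distance argument applied to the finitely many vertices of $X$ yields an $m$ with $X\cap B_m=\emptyset$; then $G[B_m]$ is connected (upwards connectedness, $e_m$ pointing away from $r$) and avoids $X$, hence lies in a single component of $G-X$. Both $\omega$ and $\omega'$ have rays with tails in $G[B_m]$, so their tails lie in the same component of $G-X$. As $X$ was an arbitrary finite vertex set, $\omega=\omega'$. Combining the two halves, $\tau\rest\Psi$ is a bijection onto $\Omega(T)$, which together with the automatic statement about ends outside $\Psi$ gives that $(T,\alpha)$ displays $\Psi$.

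Throughout I only use the standing hypotheses (root, non-empty separators, upwards disjoint, upwards connected) plus the star--comb lemma and the fact, noted earlier in the excerpt, that upwards disjoint separators induce a tree-decomposition so that every vertex lives in a part. The injectivity argument is short once upwards connectedness is in hand; the surjectivity argument carries the weight, and I would take care to state the comb-to-end passage as a self-contained claim before assembling the two directions.
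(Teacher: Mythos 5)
Your proof is correct and takes essentially the same approach as the paper: your injectivity argument (a distance argument producing $B_m$ disjoint from a given finite vertex set, followed by upwards connectedness to link the two rays inside $G[B_m]$) is exactly the paper's, and your surjectivity argument fills in, with precisely the tools the paper names (the star--comb lemma plus a distance argument), the existence claim that the paper's proof delegates to the preliminary remarks preceding the lemma. The only, harmless, difference is that you apply the star--comb lemma inside the connected subgraph $G[B_0]$ rather than in $G$ itself, which in fact lets your argument dispense with the connectedness assumption on $G$ that those remarks impose.
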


\begin{proof}
By our preliminary remarks it remains to show that for every end $\eta$ of $T$ there is at most one end of $G$ corresponding to~$\eta$.
Suppose for a contradiction that $\eta$ is an end of $T$ such that two distinct ends $\omega\neq\omega'$ of $G$ correspond to it, and write $R$ for the rooted ray of $T$ that represents $\eta$.
Pick $X\in\cX$ such that $\omega$ and $\omega'$ live in distinct components of $G-X$.
As the separators of $(T,\alpha)$ are upwards disjoint, by a distance argument we find an edge $e\in R$ with orientation $\ve$ away from the root such that the separation $(A,B)=\alpha(\ve\,)$ satisfies $B\cap X=\emptyset$.
Now both of the two ends $\omega$ and $\omega'$ have rays in $G[B]$ because both of them correspond to $\eta$.
And in $G[B]$ we find paths connecting these rays, since $(T,\alpha)$ is upwards connected.
But then these rays and paths avoid $X$, contradicting the choice of~$X$.
\end{proof}

Now we are ready to state the following result of Carmesin~\cite{carmesin2014all} that solved a conjecture of Diestel~\cite{EndStructureResultsOpenProblems} from 1992 (in amended form) and, as a corollary, also solved a conjectured of Halin~\cite{halin64} from 1964 (again in amended form):

\begin{theorem}[{\cite{carmesin2014all}}]\label{CarmesinsTDCforTopologicalEnds}
Every connected graph $G$ has a rooted tree-decomposition with upwards disjoint finite connected separators that displays the undominated ends~of~$G$.
\end{theorem}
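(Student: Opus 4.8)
The plan is to realise the desired object as an upwards connected rooted $\Sinf$-tree $(T,\alpha)$ whose separators are finite, connected, non-empty and upwards disjoint, and then to read off the theorem from the machinery already assembled. Once such an $(T,\alpha)$ is at hand, the remark that an $S$-tree with upwards disjoint separators induces a tree-decomposition turns it into a genuine tree-decomposition, and Lemma~\ref{DisplayLemma} guarantees that $(T,\alpha)$ displays precisely the ends of $G$ that correspond to ends of $T$. So the whole theorem reduces to two tasks: first, building an $\Sinf$-tree with these four separator properties; and second, checking that the ends of $G$ corresponding to ends of $T$ are exactly the undominated ones.

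For the construction I would build $T$ by transfinite recursion from the root outwards, peeling off finite connected separators that march away from the root. The engine is the strict-separation property recorded in the subsection on ends: if $\omega$ is an undominated end and $S$ is any finite vertex set, then no vertex of $S$ dominates $\omega$, so some finite $X\in\cX$ strictly separates $S$ from $\omega$, that is, $S$ avoids $X\cup C(X,\omega)$. Thus beyond any separator already placed we can find a further one that is disjoint from it and still traps $\omega$; iterating yields, along each branch, a nested sequence of pairwise disjoint finite separators shrinking towards $\omega$. Dominated ends behave oppositely: a vertex $v$ dominating $\omega$ cannot be separated from $\omega$ by any finite set, so no branch of separators can overtake $v$, and $\omega$ is forced to live at the node whose part still contains $v$. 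This dichotomy is exactly what will make the corresponding ends of $T$ coincide with the undominated ones.

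The hard part is to run this peeling process so that, inside a single component that may carry infinitely many undominated ends, all of them are pulled apart simultaneously while the separators stay finite, connected and upwards disjoint, and the branching organises into a tree. Connectedness of the separators is a first hurdle: a strict separator need not be connected, so I would replace each $X$ by a finite connected vertex set of $G$ that still separates the relevant ends---available because $G$ is connected, but requiring care to keep it disjoint from the previous separator and non-empty. The genuine obstacle, however, is nestedness together with upward disjointness: naively chosen separators cross, and repairing this by the usual submodular uncrossing must be carried out transfinitely without destroying finiteness or connectedness, and with a well-founded measure of progress ensuring that the recursion closes off. This is precisely the technical heart of Carmesin's argument, and I expect essentially all of the difficulty to lie here.

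Finally, for the verification I would argue as follows. Upwards connectedness and upwards disjointness with non-empty separators let Lemma~\ref{DisplayLemma} apply, so $T$ displays the set $\Psi$ of ends of $G$ that correspond to ends of $T$. By the shrinking-separator construction every undominated end has its coherent orientation follow a ray of $T$, hence corresponds to an end of $T$ and lies in $\Psi$; conversely, by the domination dichotomy above every dominated end lives at a node and so is mapped out of $\Psi$. Thus $\Psi$ is exactly the set of undominated ends, and $(T,\alpha)$---viewed as the tree-decomposition it induces---is the required decomposition with upwards disjoint finite connected separators displaying the undominated ends of~$G$.
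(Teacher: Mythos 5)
You should know first that the paper does not prove this statement at all: it is imported wholesale from Carmesin's work, and the paper merely records how it is assembled (``combines Carmesin's Theorem~1, Remark~6.6, the second paragraph of his `Proof that Theorem~1 implies Corollary~2.6', and a standard argument to make the separators connected''). Your reduction of the theorem to the construction of an upwards connected rooted $\Sinf$-tree with finite, connected, non-empty, upwards disjoint separators, followed by Lemma~\ref{DisplayLemma} and the domination dichotomy, is a sound framing and matches how the statement is meant to be read. But as a proof it has a genuine gap, and you name it yourself: the entire construction of such an $\Sinf$-tree --- the only content of the theorem --- is deferred with ``this is precisely the technical heart of Carmesin's argument, and I expect essentially all of the difficulty to lie here.'' A proof proposal that reduces the statement to its hardest part and then cites the difficulty is an outline, not a proof.

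Moreover, the peeling engine you sketch cannot be completed with the tools available in this paper, and it is worth seeing concretely why. The strict-separation argument you invoke is exactly what drives Lemma~\ref{ExhaustingSequenceGeneralised} and Proposition~\ref{StreeConstruction}, but there it is applied to a \emph{compact} set $\Psi$ of undominated ends: compactness is what guarantees that at each step only finitely many components contain ends of $\Psi$, so that the tree stays locally finite and the recursion closes off, and it is also what produces (via a finite subcover) a single separator strictly separating the previous one from all relevant ends at once. The set of \emph{all} undominated ends is in general not compact, and not even closed in $\Omega$: take a ray $R_0$ whose first vertex is joined to all its other vertices, and attach to the $n$-th vertex of $R_0$ a disjoint ray $R_n$; the undominated ends of the $R_n$ converge to the dominated end of $R_0$. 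In such graphs one cannot trap the undominated ends with finitely many components per step, the naively chosen separators cross, and restoring nestedness requires the transfinite uncrossing machinery that constitutes Carmesin's actual contribution. So the missing step is not routine detail that a referee would wave through; it is the theorem.
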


\noindent The theorem above combines Carmesin's Theorem~1, Remark~6.6, the second paragraph of his `Proof that Theorem~1 implies Corollary~2.6', and a standard argument to make the separators connected.

%Our \cite[Lemma~3.7]{StarComb2TheDominatedComb} will allow us to strengthen Carmesin's theorem so that it states that \emph{every connected graph $G$ has a tree-decomposition with pairwise disjoint finite connected separators that displays the undominated ends of $G$.}

% +++ The comb +++

\section{Combs}\label{section:CombTreeCrit}

\noindent Jung~\cite{jung69} noted that, given any connected graph $G$ and any vertex set $U\subset V(G)$, the absence of a comb \at $U$ is equivalent to $U$ being \emph{dispersed} in $G$, meaning that for every ray $R\subset G$ there is a finite vertex set $X\subset V(G)$ separating $R$ from $U$. 
This equivalence then gives another equivalence as $U$ being dispersed rephrases to `no end of $G$ lies in the closure of $U$'.
For readers familiar with the topological space $\vert G\vert=G\sqcup\Omega$ as in~\cite{DiestelBook5}, this is to say that $U$ is closed in~$\vert G\vert$.
These assertions---while equivalent to the absence of a comb---are abstract and do not immediately provide concrete structures that are complementary to combs.
Providing concrete complementary structures is the aim of this section.

% +++ Comb-NT duality +++

\subsection{Normal trees}\label{subsection:CombNT}

In this section we prove
\begin{mainresult}\label{CombTreeDualityI} 
\TFAD
\begin{enumerate}
\item $G$ contains a comb \at $U$;
\item there is a rayless normal tree $T\subset G$ that contains $U$.
\end{enumerate}
Moreover, the normal tree $T$ in \emph{(ii)} can be chosen such that it contains $U$ cofinally.
\end{mainresult}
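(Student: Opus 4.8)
The plan is to establish complementarity by showing that (i) and (ii) can never hold together and that at least one of them always holds. For the first direction, suppose $G$ contained both a comb \at $U$ and a rayless normal tree $T\subset G$ with $U\subset V(T)$. The spine of the comb lies in some end $\omega$, and since all but finitely many of its infinitely many teeth lie in any prescribed $C(X,\omega)$, the end $\omega$ lies in the closure of the set of teeth and hence in $\Abs{T}$. But then Lemma~\ref{NormalTreeNormalRay} hands us a normal ray of $T$ inside $\omega$, contradicting that $T$ is rayless. So (i) and (ii) exclude each other.

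It remains to show that whenever $G$ contains no comb \at $U$ --- equivalently, $\Abs{U}=\emptyset$ --- there is a rayless normal tree containing $U$ cofinally. I would build this tree by a transfinite recursion in the spirit of Jung, constructing an increasing chain $(T_\alpha)$ of normal trees subject to two invariants: each $T_\alpha$ is normal in $G$, and the vertices of $U$ lying in $T_\alpha$ are cofinal in its tree-order. Start from $T_0=\{r\}$ for some $r\in U$ (the case $U=\emptyset$ being trivial). At a successor step, consider every component $C$ of $G-T_\alpha$ that meets $U$; its neighbourhood $N(C)$ is a chain in $T_\alpha$ (any two of its vertices are joined by a $T_\alpha$-path through $C$, so they are comparable by normality), and I claim it has a top vertex $t_C$. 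Pick $u_C\in U\cap C$ together with a $t_C$--$u_C$ path whose interior lies in $C$, and append its $C$-part as a chain above $t_C$; performing this simultaneously for all such $C$ produces $T_{\alpha+1}$. A short check, analogous to the generalised up-closure computation in Lemma~\ref{NTseparationAndComponents}, shows that every new $T$-path has comparable endvertices, so normality is preserved, while all newly added vertices sit below the fresh $U$-vertices $u_C$, so the cofinality invariant survives. At limit stages take unions, which preserves both invariants. As the vertex sets $V(T_\alpha)$ strictly increase until no component of $G-T_\alpha$ meets $U$, the recursion stabilises at some $T=T_\alpha$ with $U\subset V(T)$ and $U$ cofinal in $T$.

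The crux --- and the step I expect to be the main obstacle --- is justifying that $t_C=\max N(C)$ actually exists, i.e.\ that each $T_\alpha$ is rayless; otherwise the recursion is not even well defined. This is exactly where the hypothesis $\Abs{U}=\emptyset$ is used. Successor steps only append finite paths and therefore cannot create an infinite chain out of a rayless tree; at a limit stage $T_\lambda$ the cofinality invariant lets me apply Lemma~\ref{NTcombinatorialClosureCofinal} to $W:=U\cap V(T_\lambda)$, yielding $\Abs{T_\lambda}=\Abs{W}\subset\Abs{U}=\emptyset$, and since any ray of $T_\lambda$ would live in an end lying in $\Abs{T_\lambda}$, the tree $T_\lambda$ must be rayless. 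Hence every $T_\alpha$, and in particular the final tree $T$, is rayless, which both keeps the construction running and delivers assertion (ii) together with its cofinal strengthening.
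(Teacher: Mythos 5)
Your proposal is correct and takes essentially the paper's approach: the exclusion direction is the paper's argument via Lemma~\ref{NormalTreeNormalRay} (the paper routes it through Lemma~\ref{NTlevelsDispersed}), and your transfinite recursion reconstructs precisely the content of Jung's Proposition~\ref{JungsRaylessSequence}, invoked by the paper as Corollary~\ref{JungCoverDispersedSet}, with the same extension step---a path into a component $C$ attached at the top of the chain $N(C)$, which exists by raylessness---and the same raylessness argument from cofinality together with dispersedness of $U$. The only difference is organisational: you iterate the extension step by explicit transfinite recursion until it stabilises, whereas the paper packages the same step as a Zorn's-lemma maximality argument.
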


\noindent For this, we need the following key results of Jung's proof of his 1967 characterisation, Theorem~\ref{JungTheorem}, of the connected graphs that have normal spanning trees.

\begin{proposition}[Jung]\label{JungsRaylessSequence}
Let $G$ be any connected graph and let $U\subset V(G)$ be any vertex set.
If $U$ is a countable union $\bigcup_{n\in\N}U_n$ of dispersed sets $U_n\subset V(G)$ and $v$ is any vertex of $G$, then $G$ contains an ascending sequence $T_0\subset T_1\subset\cdots$ of rayless normal trees $T_n\subset G$ such that each $T_n$ contains $U_0\cup\cdots\cup U_n$ cofinally and is rooted in $v$.
In particular, the overall union $T:=\bigcup_{n\in\N}T_n$ is a normal tree in $G$ that contains $U$ cofinally and is rooted in $v$.
\end{proposition}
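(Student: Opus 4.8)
The plan is to prove the statement by induction on $n$, the inductive step being a self-contained \emph{extension lemma} that enlarges a given rayless normal tree so as to absorb one further dispersed set cofinally; the union then falls out at the end. Throughout I use the equivalence recalled at the start of this section: a set $D$ is dispersed exactly when $G$ contains no comb \at $D$ (equivalently $\Abs{D}=\emptyset$). I also use that a finite union of dispersed sets is again dispersed, so that each $W_n:=U_0\cup\cdots\cup U_n$ is dispersed.

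The engine is the following extension lemma: \emph{given a rayless normal tree $S\subset G$ rooted in $v$ and a dispersed set $D$, there is a rayless normal tree $T\supset S$ rooted in $v$ with $D\subset V(T)$ such that every leaf of $T$ outside $S$ lies in $D$.} I build $T$ by transfinite recursion, starting from $S_0:=S$: as long as some component $C$ of $G-V(S_\alpha)$ still meets $D$, I hang above the top $t_C:=\max N(C)$ of its neighbourhood a finite path that starts at a neighbour of $t_C$, runs through $C$, and ends at a vertex of $D$. Here $N(C)$ is a chain in $S_\alpha$ by Lemma~\ref{NTseparationAndComponents}\,(i) (two incomparable neighbours would be linked through $C$ off the separator supposed to separate them), and it is finite---so that $t_C$ exists---provided $S_\alpha$ is rayless, an invariant I verify below. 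A routine check, again using that $N(C)$ is a chain, shows each such attachment preserves normality, and normality passes to the unions taken at limit stages because every $T$-path is finite and is therefore already an $S_\alpha$-path for some $\alpha$. The recursion strictly enlarges the vertex set, so it halts, necessarily with $D\subset V(T)$; and since every attached path ends in $D$ while its interior vertices always retain a successor, every leaf created lies in $D$.

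The crux is the invariant that every $S_\alpha$ is rayless, for this alone guarantees that the chains $N(C)$ have tops and that the up-closure of each node reaches a leaf. I argue by contradiction. An infinite branch $R$ of some $S_\alpha$ is a ray of $G$ whose vertices beyond $S$ all lie on attached paths; since each such path is finite, $R$ meets infinitely many of them. For each of these paths, the portion lying strictly above the last vertex it shares with $R$ is a (possibly trivial) tooth ending in $D$, and these teeth are pairwise disjoint and attached to $R$ at distinct vertices. Thus $R$ together with these teeth is a comb \at $D$, contradicting that $D$ is dispersed. This is the single point where the hypothesis is used. With raylessness in hand every node lies below a leaf, so ``every leaf outside $S$ lies in $D$'' upgrades to ``every node of $T$ outside $S$ lies below a vertex of $D$''.

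Finally I assemble the sequence. Let $T_0$ be the output of the extension lemma for $S=\{v\}$ and $D=U_0$, and for $n\ge0$ let $T_{n+1}$ be its output for $S=T_n$ and $D=U_{n+1}$. Each $T_n$ is a rayless normal tree rooted in $v$, and it contains $W_n$ cofinally: a node already in $T_{n-1}$ lies below a $W_{n-1}$-vertex by the induction hypothesis (the tree-order on $T_{n-1}$ is unchanged, since we only ever add vertices above existing ones), while a node outside $T_{n-1}$ lies below a $U_n$-vertex by the extension lemma; for $T_0$ every node lies below a $U_0$-leaf directly. As each inclusion $T_n\subset T_{n+1}$ adds vertices only above existing nodes, $T_n$ is down-closed in $T:=\bigcup_n T_n$, so $T$ is a normal tree rooted in $v$; and $T$ contains $U$ cofinally, because every node lies in some $T_n$ and hence below a $W_n$-vertex, with $W_n\subset U$. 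Note that $T$ itself need not be rayless---consistent with the statement---since the countable union $U$ need not be dispersed.
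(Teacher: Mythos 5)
Your proof is correct, and its combinatorial engine is exactly the paper's: extend the current tree by routing a path from the maximal element $t_C$ of the neighbourhood chain $N(C)$ of a component $C$ that still meets the target set, through $C$, to a vertex of that set; normality survives because everything new hangs above the chain's top, and dispersedness is what forbids rays. The difference is in the scaffolding. The paper performs one application of Zorn's lemma per induction step, over the poset of normal trees $T\supset T_n$ in which $T\cap U_{\le n+1}$ is cofinal in the tree-order of $T$; raylessness of \emph{every} tree in that poset is deduced from this cofinality side condition together with dispersedness of $U_{n+1}$ and of $V(T_n)$ (Lemma~\ref{NTlevelsDispersed}), and the path attachment appears only once, to contradict maximality. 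You instead run an explicit transfinite recursion, attaching one path per stage; raylessness is maintained as an invariant by reading a comb off the attached paths themselves (each of which terminates in $D$), and cofinality is harvested only at the end, from the fact that every leaf created lies in $D$ together with the fact that in a rayless tree every node lies below a leaf. Your route spares you the (mildly delicate) verification that chains in the Zorn poset have rayless, cofinality-preserving upper bounds, at the price of a termination argument and the leaf analysis; the two are otherwise interchangeable. One step of yours deserves an extra line: ``$R$ meets infinitely many attached paths'' presupposes that $R$ has infinitely many vertices outside $S$, which needs the remark that an ascending tail of $R$, once it leaves the down-closed subtree $S$, can never re-enter it, and cannot stay in $S$ forever since $S$ is rayless. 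This is routine, and on par with the paper's own unproved assertion that all trees in its poset are rayless.
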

\begin{proof}
It suffices to show that, given a rayless normal tree $T_n$ containing $U_{\le n}:=U_0\cup\cdots\cup U_n$ cofinally, we find a rayless normal tree $T_{n+1}$ extending $T_n$ and containing $U_{\le n+1}=U_{\le n}\cup U_{n+1}$ cofinally.
For this, let any $T_n$ be given.
Consider the collection of all normal trees $T\supset T_n$ with $T\cap U_{\le n+1}$ cofinal in the tree-order of $T$, partially ordered by letting $T\le T'$ whenever $T$ is extended by $T'$ as a normal tree.
Since $U_{n+1}$ is dispersed and $T_n$ is rayless, all of these trees must be rayless.
Let $T_{n+1}$ be a maximal tree that Zorn's lemma provides for this poset. In the following we show that $T_{n+1}$ is as desired.

Assume for a contradiction that some vertex $u\in U_{\le n+1}$ is not contained in $T_{n+1}$.
Since $T_{n+1}$ is normal, the neighbourhood of the component $C$ of $G-T_{n+1}$ that contains $u$ forms a chain in the tree-order of $T_{n+1}$.
As $T_{n+1}$ is rayless, this chain has a maximal node $x\in T_{n+1}$.
Let $T'$ be the union of $T_{n+1}$ and an $x$--$u$ path $P$ with $\mathring{P}\subset C$.
Then the neighbourhood in $T'$ of any new component $C'\subset C$ of $G-T'$ is a chain in $T'$, so $T'$ is again normal.
But then $T'$ contradicts the maximality of $T_{n+1}$, completing the proof that $T_{n+1}$ is as desired.
\end{proof}

\begin{corollary}[Jung]\label{JungCoverDispersedSet}
Let $G$ be any graph and let $U\subset V(G)$ be any vertex set.
If $U$ is dispersed itself and $v$ is any vertex of $G$, then $G$ contains a rayless normal tree that contains $U$ cofinally and is rooted in $v$.\qed
\end{corollary}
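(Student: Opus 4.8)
The plan is to read the corollary off directly from Proposition~\ref{JungsRaylessSequence}, of which it is the single-set special case. The key observation is that a dispersed set is trivially a countable union of dispersed sets: I would write $U=\bigcup_{n\in\N}U_n$ by setting $U_0:=U$ and $U_n:=\emptyset$ for every $n\ge 1$. The only point to check for this is that the empty set is dispersed, which holds vacuously: for $U_n=\emptyset$ even the empty vertex set separates every ray from~$U_n$.

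Feeding this decomposition, together with the prescribed vertex $v$, into Proposition~\ref{JungsRaylessSequence} then yields an ascending sequence $T_0\subset T_1\subset\cdots$ of rayless normal trees $T_n\subset G$, each rooted in $v$, such that $T_n$ contains $U_0\cup\cdots\cup U_n$ cofinally. By our choice of the $U_n$ we have $U_0\cup\cdots\cup U_n=U$ for every~$n$; in particular the first tree $T_0$ is already a rayless normal tree, rooted in $v$, that contains $U$ cofinally. Setting $T:=T_0$ then completes the proof.

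I expect no genuine obstacle here, since all the substantive work---the Zorn's-lemma extension argument that enlarges a rayless normal tree so as to capture more of $U$ cofinally, while the assumption that $U$ (hence each $U_n$) is dispersed keeps the tree rayless---has already been carried out in the proof of Proposition~\ref{JungsRaylessSequence}. The corollary merely extracts the first term of the sequence produced there, which is why it is stated without a separate proof. The one detail I would double-check is pure bookkeeping: that reading off $T_0$ really yields cofinality for the full set $U$ rather than for some proper initial segment of the $U_n$, which is immediate from $U_0\cup\cdots\cup U_n=U$.
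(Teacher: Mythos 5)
Your proof is correct and is essentially the paper's own (implicit) argument: the corollary carries a \qed precisely because it is the single-set special case of Proposition~\ref{JungsRaylessSequence}, obtained exactly as you do by writing $U=U_0\cup\emptyset\cup\emptyset\cup\cdots$ and reading off one tree from the resulting sequence. Your choice of extracting $T_0$ rather than the overall union $T$ is the right one, since the union of the ascending sequence is only guaranteed to be normal, not rayless.
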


\begin{corollary}[Jung]\label{JungCountableSet}
Let $G$ be any graph and let $U\subset V(G)$ be any vertex set.
If $U$ is countable and $v$ is any vertex of $G$, then $G$ contains a normal tree that contains $U$ cofinally and is rooted in $v$.\qed
\end{corollary}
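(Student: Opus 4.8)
The plan is to deduce this corollary directly from Proposition~\ref{JungsRaylessSequence} by exhibiting the countable set $U$ as a countable union of dispersed sets. The most economical choice is to split $U$ into singletons: since $U$ is countable, I would fix an enumeration $U=\{\,u_n\mid n\in\N\,\}$ (padding with empty sets if $U$ is finite) and put $U_n:=\{u_n\}$, so that $U=\bigcup_{n\in\N}U_n$.

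The key observation is that each $U_n$ is dispersed. Indeed, a comb \at a set has infinitely many of its teeth in that set, so no comb can be \at a finite set; hence every finite set---in particular every singleton $\{u_n\}$ and the empty set---is dispersed, recalling that being dispersed is exactly the absence of a comb \at the set. Thus $U=\bigcup_{n\in\N}U_n$ is a countable union of dispersed sets, which is precisely the hypothesis of Proposition~\ref{JungsRaylessSequence}.

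Before invoking the proposition I would dispose of the connectivity assumption, since the proposition requires $G$ connected while the corollary does not. A normal tree rooted in $v$ is connected and therefore lies inside the component $C$ of $v$ in $G$; for the conclusion to be attainable we thus have $U\subset V(C)$, and I may replace $G$ by $C$ without loss of generality. This replacement is harmless for normality, because every $T$-path in $G$ between vertices of a tree $T\subset C$ stays inside $C$, so normality of $T$ in $C$ and in $G$ coincide; likewise a set $U_n\subset V(C)$ is dispersed in $C$ if and only if it is dispersed in $G$. With $G$ connected, Proposition~\ref{JungsRaylessSequence} applied to the decomposition $U=\bigcup_{n}U_n$ and the vertex $v$ then yields a normal tree $T=\bigcup_{n}T_n\subset G$ that contains $U$ cofinally and is rooted in $v$, as desired. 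Note that, in contrast with Corollary~\ref{JungCoverDispersedSet}, one cannot assert here that $T$ is rayless: the individual $T_n$ are rayless, but their union generally is not.

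I do not expect any serious obstacle; the entire content is the reduction to the proposition. The only points needing a word of care are the reformulation of countability as a countable union of (trivially dispersed) singletons and the routine passage to the component of $v$ that restores connectivity.
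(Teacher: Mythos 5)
Your proof is correct and matches the paper's intended derivation: the corollary carries only a \qed precisely because a countable set is a countable union of singletons, each trivially dispersed, so Proposition~\ref{JungsRaylessSequence} applies directly (just as in the backward implication of Theorem~\ref{JungTheorem}). Your extra care about connectivity---passing to the component of $v$, which the proposition requires but the corollary's statement leaves implicit---is a sensible reading of the statement rather than a different approach.
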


\begin{lemma}\label{NTlevelsDispersed}
Let $G$ be any graph.
The vertex set of any rayless normal tree $T\subset G$ is dispersed.
In particular, the levels of any normal tree $T\subset G$ are dispersed.
\end{lemma}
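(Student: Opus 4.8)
The plan is to translate the combinatorial notion of ``dispersed'' into the end-topological language of closures and then read everything off the normal rays of $T$ via Lemma~\ref{NormalTreeNormalRay}. Recall from the beginning of this section that a vertex set is dispersed in $G$ precisely when no end of $G$ lies in its closure; so it suffices to show that $\Abs{V(T)}=\emptyset$ for the first assertion and that $\Abs{L_n}=\emptyset$ for every level $L_n$ of $T$ for the second. Throughout I would use that ``lying in the closure of a subgraph $T$'' and ``lying in the closure of its vertex set $V(T)$'' mean the same thing, since a component of $G-X$ meets $T$ exactly when it contains a vertex of $V(T)$; in particular $\Abs{\cdot}$ is monotone under inclusion of vertex sets.

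For the first assertion this is then immediate. By Lemma~\ref{NormalTreeNormalRay} the map sending an end in $\Abs{T}$ to the unique normal ray it contains is a bijection onto the set of normal rays of $T$. A rayless tree has no rays at all, hence no normal rays, so $\Abs{T}=\Abs{V(T)}$ is empty and $V(T)$ is dispersed.

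For the ``in particular'' part I would argue by contradiction at a fixed level of an \emph{arbitrary} normal tree $T$. Suppose some end $\omega$ lies in the closure of the level $L_n$ at height $n$. Since $L_n\subset V(T)$, monotonicity gives $\omega\in\Abs{T}$, so Lemma~\ref{NormalTreeNormalRay} hands me the normal ray $v_0v_1v_2\cdots$ of $T$ contained in $\omega$, where $v_i$ sits at height $i$ (a normal ray climbs exactly one level per step, and every vertex of a graph-theoretic tree has finite height). Now I separate $\omega$ from $L_n$ by the finite down-closed set $\dc{v_n}$: applying Lemma~\ref{NTseparationAndComponents}~(ii) with $W=\dc{v_n}$ exactly as in the proof of Lemma~\ref{NormalTreeNormalRay}, the component of $G-\dc{v_n}$ in which $\omega$ lives is spanned by the generalised up-closure $\guc{v_{n+1}}$, whose tree vertices are precisely those in $\uc{v_{n+1}}$ and hence all of height at least $n+1$. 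Thus this component misses the whole level $L_n$, contradicting $\omega\in\Abs{L_n}$. Hence $\Abs{L_n}=\emptyset$ and every level is dispersed.

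The only real subtlety lies in this last step. The naive route---truncate $T$ to its first $n$ levels to obtain a rayless tree and invoke the first assertion---forces one to verify that such a down-closed truncation is again normal in $G$, and this is awkward: a path in $G$ between two low vertices that avoids the truncation may still run through tree vertices \emph{above} level $n$, so the comparability of its endvertices does not drop out of the normality of $T$ by a one-line argument. Routing the level computation through the explicit component $\guc{v_{n+1}}$ of the normal ray sidesteps this entirely, and it is the place where I would spend the care. The remaining ingredients---the bijection of Lemma~\ref{NormalTreeNormalRay}, the height bookkeeping along a normal ray, and the monotonicity of $\Abs{\cdot}$---are routine.
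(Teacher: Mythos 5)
Your proposal is correct and is essentially the paper's own proof: the paper proves this lemma simply by citing Lemma~\ref{NormalTreeNormalRay}, and your argument is exactly the fleshed-out version of that citation (a rayless normal tree has no normal rays, so $\Abs{T}=\emptyset$ by the bijection, while for the levels the end's normal ray together with the separation properties of Lemma~\ref{NTseparationAndComponents} separates it from $L_n$). As a minor aside, the truncation route you dismiss is less awkward than you fear---down-closed subtrees of normal trees are normal by a two-line application of Lemma~\ref{NTseparationAndComponents}~(i)---but your direct argument via $\guc{v_{n+1}}$ is equally sound.
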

\begin{proof}
Lemma~\ref{NormalTreeNormalRay}.
\end{proof}

Jung's abstract characterisation of the normally spanned graphs goes as follows: 
\begin{theorem}[{Jung,~\cite[Satz~6]{jung69}}]\label{JungTheorem}
Let $G$ be any graph. A vertex set $W\subset V(G)$ is normally spanned in $G$ if and only if it is a countable union of dispersed sets. In particular, 
$G$ is normally spanned if and only if $V(G)$ is a countable union of dispersed sets.
\end{theorem}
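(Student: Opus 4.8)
The plan is to prove the two implications separately. The backward direction is essentially already packaged in Proposition~\ref{JungsRaylessSequence}, while the forward direction rests on the simple but crucial observation that a normal tree stratifies into only \emph{countably many} levels, each of which is dispersed by Lemma~\ref{NTlevelsDispersed}. The ``in particular'' statement will then be the instance $W:=V(G)$.

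For the backward direction, suppose $W$ is a countable union of dispersed sets. Because a normal tree is connected, it lies inside a single component of $G$; so I first pass to the component $C$ containing $W$ (if $W$ is empty there is nothing to prove, and if $W$ met several components it could not be normally spanned at all). Now $C$ is connected, so I apply Proposition~\ref{JungsRaylessSequence} to $C$, to the decomposition of $W$ into dispersed sets, and to an arbitrary root $v\in C$. This immediately produces a normal tree $T\subset C\subset G$ that contains $W$ (in fact cofinally), witnessing that $W$ is normally spanned.

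For the forward direction, suppose $W$ is normally spanned, say $W\subset V(T)$ for some normal tree $T\subset G$ rooted in a vertex $r$. The key point is that $T$ is a graph-theoretic tree, so the unique $r$--$x$ path in $T$ is finite for every vertex $x$; hence every vertex lies on exactly one level $L_n$, where $n\in\N$ is its distance from $r$, and $V(T)=\bigcup_{n\in\N}L_n$ is a countable union of levels. By Lemma~\ref{NTlevelsDispersed} each level $L_n$ is dispersed, and a subset of a dispersed set is again dispersed, since any finite vertex set separating a ray from $L_n$ also separates it from $W\cap L_n$. Therefore $W=\bigcup_{n\in\N}(W\cap L_n)$ exhibits $W$ as a countable union of dispersed sets, as desired. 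Applying both directions with $W:=V(G)$ for connected $G$ yields the ``in particular'' statement.

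The genuine mathematical content is thus carried by the two cited results, and the only thing that needs care is the bookkeeping around connectedness. Because ``normally spanned'' forces $W$ into a single component, the equivalence is really an equivalence between being normally spanned and being a countable union of dispersed sets \emph{within one component}; the forward direction needs no connectedness hypothesis, whereas the backward direction must be run inside the component containing $W$ so that Proposition~\ref{JungsRaylessSequence} applies. I expect this component-restriction, rather than any substantial new argument, to be the main subtlety to handle cleanly.
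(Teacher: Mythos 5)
Your proof is correct and follows essentially the same route as the paper: the backward implication is exactly Proposition~\ref{JungsRaylessSequence}, and the forward implication is exactly the observation that the levels of a normal tree are dispersed (Lemma~\ref{NTlevelsDispersed}), with $W$ split along those countably many levels. Your extra bookkeeping about restricting to the component containing $W$ is a fair point of precision (the paper's two-line proof glosses over it, since Proposition~\ref{JungsRaylessSequence} is stated for connected graphs), but it does not change the substance of the argument.
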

For an excluded-minor characterisation of the connected graphs with normal spanning trees see \cite{DiestelLeaderNST,PitzNewNSTobstructions}.

\begin{proof}[{Proof of Theorem~\ref{JungTheorem}}]
The backward implication is provided by Proposition~\ref{JungsRaylessSequence}.
The forward implication holds as the levels of any normal tree are dispersed, Lemma~\ref{NTlevelsDispersed}.
\end{proof}

We are now ready to prove Theorem~\ref{CombTreeDualityI}:

\begin{proof}[Proof of Theorem~\ref{CombTreeDualityI}]
First, to show that at most one of (i) and (ii) holds, we show (ii)$\to\neg$(i).
If $T\subset G$ is a rayless normal tree containing $U$, then $V(T)$ is dispersed by Lemma~\ref{NTlevelsDispersed}, and hence so is~$U\subset V(T)$.

It remains to show that at least one of (i) and (ii) holds; we show $\neg$(i)$\to$(ii).
Since the absence of a comb with all its teeth in $U$ means that $U$ is dispersed, Corollary~\ref{JungCoverDispersedSet} yields a rayless normal tree in $G$ that contains $U$ cofinally.
\end{proof}

\subsection{Tree-decompositions}\label{subsection:CombTree}

In this section, we show how the rayless normal tree from Theorem~\ref{CombTreeDualityI} gives rise to a tree-decomposition that is complementary to combs.

\begin{mainresult}\label{CombTreeDualityII}
\TFAD\
\begin{enumerate}
\item $G$ contains a comb \at $U$;
\item $G$ has a rayless tree-decomposition into parts each containing at most finitely many vertices from $U$ and whose parts at non-leaves of the decomposition tree are all finite.
\end{enumerate}
Moreover, the rayless tree-decomposition in \emph{(ii)} displays $\Abs{U}$ and may be chosen with connected separators.
\end{mainresult}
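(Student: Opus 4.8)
The plan is to derive the theorem from Theorem~\ref{CombTreeDualityI} by converting the rayless normal tree into a tree-decomposition. As with Theorem~\ref{CombTreeDualityI}, the two assertions are complementary because (ii) exhibits $U$ as dispersed: if $G$ had a comb attached to $U$, its spine would be a ray of $G$ that, by Lemma~\ref{TDCseps} and the fact that only finitely many parts at non-leaves are infinite, would have to live in a single part—but that part meets $U$ only finitely, contradicting that the comb's teeth lie in $U$. So the real work is showing $\neg$(i)$\to$(ii). Assume $G$ contains no comb attached to $U$, so $U$ is dispersed; by Theorem~\ref{CombTreeDualityI} there is a rayless normal tree $T\subset G$ containing $U$ cofinally.

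First I would build the decomposition tree and parts directly from $T$. The natural choice is to take the decomposition tree to be $T$ itself (or $T$ with an added root if convenient), and to assign to each node $x$ the part $V_x:=\guc{x}\cap(\dc{x}\cup(\text{relevant closure}))$—more precisely, to each node $x$ I would let $V_x$ consist of $x$ together with the vertices of those components of $G-T$ whose attachment reaches up to $x$ but no higher, using the generalised up-closure machinery of Lemma~\ref{NTseparationAndComponents}. The separator between a node $x$ and its parent should be the finite chain $\dc{x}$ (or the interval from the parent down to the root, suitably truncated), which is finite because $T$ is rayless so every chain is finite. Lemma~\ref{NTseparationAndComponents}~(i) guarantees these chains separate correctly, so this genuinely defines a tree-decomposition, and its decomposition tree $T$ is rayless by construction. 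To get the separators connected, I would subdivide or reroute along tree-paths so that each adhesion set is an interval $\dc{x}$ in $T$, which is connected as a path in $T$.

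The main obstacle is controlling two quantitative properties simultaneously: each part must meet $U$ only finitely, and each part at a non-leaf must be finite. For the first, since $T$ contains $U$ cofinally and the part at $x$ lives in the generalised up-closure region hanging off $x$, the vertices of $U$ in that region are cofinal below the children of $x$—so $U\cap V_x$ should be forced finite by pushing each $U$-vertex up to the node immediately below it, making $V_x\cap U$ consist essentially of the single coordinate $x$ if $x\in U$. The delicate part is the finiteness of non-leaf parts: a non-leaf $x$ has children in $T$, and I must ensure the components of $G-T$ absorbed into $V_x$ contribute only finitely many vertices, which I would arrange by absorbing each such component into the part at the \emph{highest} node its neighbourhood reaches, so that infinite components land at leaves. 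Finally, for the `moreover' clause, the display of $\Abs{U}$ should follow from Lemma~\ref{NormalTreeNormalRay}: the ends in $\Abs{T}=\Abs{U}$ (equality by Lemma~\ref{NTcombinatorialClosureCofinal}) correspond bijectively to normal rays of $T$, hence to ends of the decomposition tree $T$, while every end not in $\Abs{U}$ is separated from $T$ by a finite set and therefore lives at a node; I expect this to match the definition of `displays' once the correspondence $\tau$ is checked against Lemma~\ref{DisplayLemma} or verified by hand using upwards disjointness of the chains $\dc{x}$.
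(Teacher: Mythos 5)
Your overall strategy---deriving the theorem from Theorem~\ref{CombTreeDualityI} and turning the rayless normal tree $T$ into a tree-decomposition whose parts are built from down-closures---is the same as the paper's, but your treatment of the components of $G-T$ has a genuine gap, and it is exactly where the paper's one non-obvious idea lies. You propose to absorb each component $C$ of $G-T$ into the part at the highest node $t_C$ that its neighbourhood reaches, claiming that this makes ``infinite components land at leaves''. That claim is false: $t_C$ is the maximum of the finite chain $N(C)$ in the tree-order and can perfectly well be an internal node of $T$. Concretely, let $G$ be obtained from disjoint rays $H_1,H_2,\dots$ and vertices $u_1,u_2,\dots$ by joining a new vertex $r$ to every $u_i$ and to the first vertex of every $H_i$, and let $U=\{u_1,u_2,\dots\}$. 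Then $U$ is dispersed (every $u_i$ has $r$ as its only neighbour, so no infinitely many disjoint paths can end in $U$), and the star $T$ with centre $r$ and leaves the $u_i$ is a rayless normal tree containing $U$ cofinally; but every component $H_i$ of $G-T$ has $N(H_i)=\{r\}$, so your rule dumps all of them into the part at the non-leaf $r$, making that part infinite and violating (ii). No assignment of components to their maximal attachment nodes can avoid this. The paper's fix is to enlarge the decomposition tree: each component $C$ of $G-T$ becomes a \emph{new} node, joined as a leaf to $t_C$, with part $\dc{t_C}\cup V(C)$, while each original node $t$ keeps only the finite chain $\dc{t}$ as its part. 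Then every possibly infinite part sits at a leaf of the new tree, each part meets $U$ finitely because $U\subset V(T)$ so the components avoid $U$, the separators are down-closures and hence connected paths in $T$, and the new tree is still rayless. (One could instead rescue your plan by assigning $C$ to an existing leaf of $T$ above $t_C$---such a leaf exists precisely because $T$ is rayless---but that is not what your stated rule does.)

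A secondary gap is in your complementarity argument: from ``the spine lives in a single part $V_t$'' and ``$V_t$ meets $U$ finitely'' you cannot conclude a contradiction directly, because the spine's vertices need not lie in $U$. The argument can be completed: the node $t$ must be a leaf, since a non-leaf part is finite and cannot contain infinitely many spine vertices; then the adhesion set $X$ at the edge of $T$ at $t$ is finite, all but finitely many teeth of the comb lie in the component of $G-X$ containing a tail of the spine, and by Lemma~\ref{TDCseps} that component is contained in $V_t$, so $V_t$ meets $U$ infinitely after all. The paper sidesteps this by proving complementarity differently, via Lemma~\ref{CombTree_StarComb_Crit}: applying Lemma~\ref{sclForRaylessTrees} in the rayless decomposition tree and the pigeonhole principle, it extracts a critical vertex set in the closure of every infinite $U'\subset U$, a statement it then reuses for Theorem~\ref{CombCritDuality}.
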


We start with a lemma which shows that at most one of (i) and (ii) holds.

\begin{lemma}\label{CombTree_StarComb_Crit}
Let $G$ be any graph and let $U\subset V(G)$ be any vertex set.
Suppose that $G$ has a rayless tree-decomposition into parts each containing at most finitely many vertices from $U$ and whose parts at non-leaves of the decomposition tree are all finite.
Then for every infinite $U'\subset U$ there is a critical vertex set of $G$ that lies in the closure of $U'$.
\end{lemma}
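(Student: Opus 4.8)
The plan is to root the decomposition tree $T$ at an arbitrary node $r$ and track where the infinite set $U'$ accumulates along $T$. For a node $t$ I would write $W_t:=U'\cap\bigcup_{s\ge t}V_s$ for the set of vertices of $U'$ appearing in some part at or above $t$. Then $W_r=U'$ is infinite, and the collection $P$ of nodes $t$ with $W_t$ infinite is down-closed in the tree-order (as $s\le t$ gives $W_s\supseteq W_t$). Since $T$ is rayless, $P$ cannot contain an infinite ascending chain, because such a chain would be a ray of $T$; hence there is a node $t^*\in P$ each of whose children $c$ has $W_c$ finite (otherwise one could greedily descend into $P$ forever and build a ray). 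Locating this $t^*$ by raylessness is the first key step.

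Next I would extract the relevant branching at $t^*$ and exploit both hypotheses on the parts. Because each part meets $U$ only finitely, $U'\cap V_{t^*}$ is finite; since $W_{t^*}=(U'\cap V_{t^*})\cup\bigcup_c W_c$ is infinite while each $W_c$ is finite, infinitely many children $c$ must satisfy $W_c\neq\emptyset$. In particular $t^*$ is a non-leaf, so by the second hypothesis $V_{t^*}$ is finite. Discarding the finitely many vertices of $U'\cap V_{t^*}$, I can then choose, for infinitely many children $c$, a vertex $u_c\in U'\cap W_c$ with $u_c\notin V_{t^*}$; such a $u_c$ lies genuinely in the branch of $T-t^*$ through $c$.

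Now I would pass from the tree back to the graph. For each such $c$ let $C_c$ be the component of $G-V_{t^*}$ containing $u_c$. By Corollary~\ref{connectedSubgraphTDC} the connected subgraph $C_c$ avoids $V_{t^*}$ and is therefore confined to the component of $T-t^*$ through $c$; hence distinct children yield disjoint components $C_c$, and trivially $N(C_c)\subseteq V_{t^*}$. \emph{The main obstacle is that these neighbourhoods need not coincide}, whereas a critical vertex set demands a single fixed $X$ with $N(C)=X$ for infinitely many components $C$. This is precisely where the finiteness of $V_{t^*}$ does the work: since $V_{t^*}$ has only finitely many subsets, a pigeonhole argument yields a fixed set $X\subseteq V_{t^*}$ with $N(C_c)=X$ for infinitely many of the children $c$.

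Finally I would verify that this $X$ is as required. For each of the infinitely many $c$ with $N(C_c)=X$, the set $C_c$ is connected in $G-X$ (being connected in $G-V_{t^*}$) and all of its neighbours in $G$ lie in $X$; hence $C_c$ is in fact a whole component of $G-X$ with $N(C_c)=X$, so $C_c\in\CC{X}$. These components are pairwise distinct and each meets $U'$ in its vertex $u_c$, so $X$ is a critical vertex set of $G$ lying in the closure of $U'$, which completes the argument.
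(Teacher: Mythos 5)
Your proof is correct and follows essentially the same route as the paper's: use raylessness to locate a node $t^*$ whose part is finite and which accumulates $U'$ through infinitely many branches of $T-t^*$, then pigeonhole over the finitely many subsets of the finite part $V_{t^*}$ to obtain the critical vertex set. The differences are cosmetic: where you derive $t^*$ by hand from the down-closed set of nodes $t$ with $W_t$ infinite, the paper picks representative nodes $t_u$ and invokes Lemma~\ref{sclForRaylessTrees} (whose proof is precisely your maximality argument), and your appeal to Corollary~\ref{connectedSubgraphTDC} plays the role of the paper's appeal to Lemma~\ref{TDCseps}.
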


\begin{proof}
Let such a tree-decomposition $(T,\cV)$ of $G$ be given for $U$, and let $U'$ be an arbitrary infinite subset of $U$.
For every $u\in U'$ we choose a node $t_u\in T$ with $u\in V_{t_u}$.
Since each part of the tree-decomposition contains at most finitely many vertices from $U$, we may assume without loss of generality (moving to an infinite subset of $U'$) that the nodes $t_u$ are pairwise distinct.
Hence applying Lemma~\ref{sclForRaylessTrees} in the rayless tree $T$ yields a star $S$ \at $\{\,t_u\mid u\in U'\,\}$.
Without loss of generality (as before) we may assume that the nodes $t_u$ form precisely the attachment set of $S$ and that no vertex $u$ from $U'$ is contained in the finite part $V_c$ at the central node $c$ of $S\subset T$.
For every $u\in U'$ let $C_u$ be the component of $G-V_c$ containing $u$.
Then distinct vertices from $U'$ are contained in distinct components of $G-V_c$ by Lemma~\ref{TDCseps}.
Since the finite part $V_c$ contains the neighbourhood of each component $C_u$, by the pigeon-hole principle we find a subset $X\subset V_c$ which is precisely equal to the neighbourhood of $C_u$ for some infinitely many $u\in U'$.
\end{proof}

\begin{proof}[Proof of Theorem~\ref{CombTreeDualityII}]
By Lemma~\ref{CombTree_StarComb_Crit} at most one of (i) and (ii) holds.
It remains to show that at least one of (i) and (ii) holds.

We show $\neg$(i)$\to$(ii).
Let $\nt\subset G$ be a rayless normal tree containing $U$ as provided by Theorem~\ref{CombTreeDualityI}.
We construct the desired tree-decomposition from $\nt$.
As $\nt$ is rayless and normal, the neighbourhood of any component $C$ of $G-\nt$ is a finite chain in the tree-order of $\nt$, and hence has a maximal element $t_C\in \nt$.
Now, let the tree $T$ be obtained from $\nt$ by adding each component $C$ of $G-\nt$ as a new vertex and joining it precisely to $t_C$.
The tree $T$ will be our decomposition tree; it remains to name the parts.
For nodes $t\in \nt\subset T$ we let $V_t$ consist of the down-closure $\lceil t\rceil_{\nt}$ of $t$ in the normal tree $\nt$.
And for newly added nodes $C\in T- \nt$ we let $V_C$ be the union of $V_{t_C}$ and the vertex set of the component~$C$, i.e., we put $V_C=\lceil t_C\rceil_{\nt}\cup V(C)$.
It is straightforward to check that $T$ with these parts forms a tree-decomposition of $G$ that meets the requirements of (ii) and satisfies the theorem's `moreover' part. 
\end{proof}

Our next example shows that Theorem~\ref{CombTreeDualityII}~(ii) cannot be strengthened so as to get a star as decomposition tree or to have pairwise disjoint separators:
\begin{example}\label{CombStarDualNo}
Suppose that $G$ consists of the first three levels of $T_{\aleph_0}$, the tree all whose vertices have countably infinite degree, and let $U=V(G)$.
Then $G$ is rayless so there is no comb \at $U$.

First, $G$ has no star-decomposition into parts each containing at most finitely many vertices from $U$: Indeed, assume for a contradiction that $G$ has such a star-decomposition $(S,\cV)$, and let $c$ be the centre of the infinite star $S$.
As the part $V_c$ contains at most finitely many vertices from $U=V(G)$ it must be finite.
Then each component of $G-V_c$ is contained in some $G[V_\ell]$ with $\ell$ a leaf of $S$ by Corollary~\ref{connectedSubgraphTDC}.
As each part of $(S,\cV)$ contains at most finitely many vertices from $U$, this means that every component of $G-V_c$ contains at most finitely many vertices from $U=V(G)$ and hence is finite.
But as $V_c$ is finite, $G-V_c$ must have an infinite component, a contradiction.

Second, $G$ also has no rayless tree-decomposition with finite and pairwise disjoint separators such that each part contains at most finitely many vertices from $U$: Indeed, suppose for a contradiction that $G$ has such a tree-decomposition $(T,\cV)$.
Without loss of generality we may assume that all its parts are non-empty.
The rayless decomposition tree $T$ has a vertex $t$ of infinite degree, so $V_t$ contains infinitely many of the finite and pairwise disjoint separators.
As $G$ is connected, all of these are non-empty by Lemma~\ref{TDCseps}, so $V_t$ is infinite, and hence so is $V_t\cap U=V_t$.
But this contradicts our assumptions.
\end{example}

% Comb-critical duality 

\subsection{Critical vertex sets}\label{subsection:CombCrit}
The absence of a comb \at $U$ is equivalent to $U$ being dispersed, which is to say that no end of $G$ lies in the closure of $U$.
With the combinatorial remainder $\Gamma(G)=\Omega(G)\sqcup\crit(G)$ compactifying $G$ in mind, this means that only critical vertex sets of $G$ lie in the closure of $U$, i.e.\  $\AbsC{U}\subset\crit(G)$.
Phrasing this combinatorially gives

\begin{mainresult}\label{CombCritDuality} 
\TFAD
\begin{enumerate}
\item $G$ contains a comb \at $U$;
\item for every infinite $U'\subset U$ there is a critical vertex set $X\subset V(G)$ such that infinitely many of the components in $\CX$ meet $U'$.
\end{enumerate}
\end{mainresult}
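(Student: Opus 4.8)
The plan is to leverage the machinery already developed, in particular Theorem~\ref{CombTreeDualityII} and Lemma~\ref{CombTree_StarComb_Crit}, rather than arguing from scratch. Note first that assertion~(i) of Theorem~\ref{CombCritDuality} is literally assertion~(i) of Theorem~\ref{CombTreeDualityII}, and both assertions~(ii) concern the presence of critical vertex sets in closures of infinite subsets of $U$. So the natural route is to show that the two $\neg$(i)-sides are equivalent, i.e.\ that the tree-decomposition property of Theorem~\ref{CombTreeDualityII}~(ii) is equivalent to the critical-vertex-set property~(ii) of Theorem~\ref{CombCritDuality}.

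First I would establish that at most one of (i) and (ii) holds. Suppose for contradiction that $G$ contains a comb \at $U$, witnessed by an end $\omega$ of $G$ lying in the closure of $U$ with a tail of the spine giving an infinite set $U'\subseteq U$ of teeth. I claim no critical vertex set can lie in the closure of this $U'$: since $\omega$ lies in the closure of $U'$, for every finite $X$ the component $C(X,\omega)$ meets $U'$, and one checks that a single component of $G-X$ absorbs cofinitely many teeth, so no $X$ can have infinitely many \emph{distinct} components of $\CX$ meeting $U'$. Concretely, if $X$ were critical with infinitely many components in $\CX$ meeting $U'$, then all but one of them would have to be separated from $\omega$, yet the spine's tail forces almost all teeth into $C(X,\omega)$---a contradiction. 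This gives (i)$\to\neg$(ii).

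For the converse direction $\neg$(i)$\to$(ii), I would invoke Theorem~\ref{CombTreeDualityII}: the absence of a comb \at $U$ yields a rayless tree-decomposition into parts each containing at most finitely many vertices from $U$ and whose parts at non-leaves are all finite. Now apply Lemma~\ref{CombTree_StarComb_Crit} directly: that lemma takes exactly such a tree-decomposition and an arbitrary infinite $U'\subseteq U$, and produces a critical vertex set $X$ of $G$ with infinitely many components in $\CX$ meeting $U'$---which is precisely assertion~(ii). Since $U'\subseteq U$ is an arbitrary infinite subset and $U$ itself is a valid choice, the quantifier over all infinite $U'$ in (ii) matches what the lemma delivers.

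The main obstacle I anticipate is the $\neg$(ii)$\to$(i) implication if one tries to prove the theorem self-containedly, since the subtlety lies in converting the failure of (ii)---no critical vertex set lying in the closure of some infinite $U'\subseteq U$---into an actual comb. Routing through the already-proven Theorems~\ref{CombTreeDualityI}~and~\ref{CombTreeDualityII} sidesteps this: $\neg$(ii) for the theorem (i.e.\ the existence of a bad $U'$) combined with Lemma~\ref{combinatorialRemainder} forces an end in the closure of $U'$, which by Jung's dispersedness characterisation (Lemma~\ref{NTlevelsDispersed} and Corollary~\ref{JungCoverDispersedSet}) is exactly a comb \at $U'\subseteq U$. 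Thus the cleanest presentation is: prove (i)$\to\neg$(ii) by the absorption argument above, and prove $\neg$(i)$\to$(ii) via Theorem~\ref{CombTreeDualityII} followed by Lemma~\ref{CombTree_StarComb_Crit}, with the delicate step being the teeth-absorption bookkeeping in the first implication.
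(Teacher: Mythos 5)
Your proposal is correct and takes essentially the same approach as the paper's own (combinatorial) proof: the paper likewise treats the mutual exclusion of (i) and (ii) as clear (your teeth-absorption argument merely spells this out) and derives $\neg$(i)$\to$(ii) exactly as you do, from Theorem~\ref{CombTreeDualityII} together with Lemma~\ref{CombTree_StarComb_Crit}. The paper also records a one-line alternative proof of $\neg$(i)$\to$(ii) via Lemma~\ref{combinatorialRemainder}, which is the route you sketch in your final paragraph.
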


Quantifying over all $U'$ in Theorem~\ref{CombCritDuality} is necessary for (ii)$\to\neg$(i), e.g.,  if $G$ is an infinite star of rays with $U=V(G)$. 
We remark that Theorem~\ref{CombCritDuality} implies Halin's \cite[Satz~1]{Halin1965} from 1965 which reads as follows:
\emph{A graph $G$ is rayless if and only if every infinite $M\subset V(G)$ has an infinite subset $M'$ for which there is a finite $H\subset G$ such that every component of $G-H$ contains only finitely many vertices of~$M'$.}

Since, by now, the right tools are at hand, we can prove Theorem~\ref{CombCritDuality} in two efficient ways: 

\begin{proof}[Combinatorial proof of Theorem~\ref{CombCritDuality} using Theorem~\ref{CombTreeDualityI} or~\ref{CombTreeDualityII}]
Clearly, at most one of (i) and (ii) can hold.
And if $G$ contains no comb \at $U$, then (ii) holds by Theorem~\ref{CombTreeDualityI} with Lemma~\ref{sclForRaylessTrees} or by Theorem~\ref{CombTreeDualityII} with Lemma~\ref{CombTree_StarComb_Crit}.
\end{proof}

\begin{proof}[Inverse limit proof of Theorem~\ref{CombCritDuality}]
Lemma~\ref{combinatorialRemainder} gives $\neg$(i)$\to$(ii).
\end{proof}
Note that condition (ii) yields a star \at $U$.

% +++ Comb-rank duality +++

\subsection{Rank}\label{subsection:CombRank}

In 1983, Schmidt~\cite{Schmidt1983} introduced a notion that is now known as the \emph{Schmidt rank} or just \emph{rank} of a graph, cf.~Chapter~8.5 of~\cite{DiestelBook5}.
His rank provides a recursive definition of the class of rayless graphs which enables us to prove assertions about rayless graphs by transfinite induction.
An outstanding application of this technique is the proof of the unfriendly partition conjecture for rayless graphs, cf.~\cite{UnfriendlyPartition,DiestelBook5}.
Since the absence of a comb \at $U$ is equivalent to the existence of a \emph{rayless} normal tree containing $U$, Theorem~\ref{CombTreeDualityI}, one may wonder whether there somehow is a link to the Schmidt rank.
In this section we show that this is indeed the case.

Schmidt defines the rank of a graph as follows.
He assigns \emph{rank} 0 to all the finite graphs.
And given an ordinal $\alpha>0$, he assigns \emph{rank} $\alpha$ to every (not necessarily connected) graph $G$ that does not already have a rank $\beta<\alpha$ and which has a finite set $X$ of vertices such that every component of $G-X$ has some rank $<\alpha$.

\begin{lemma}[{\cite{Schmidt1983}}]
Let $G$ be any graph. Then the following assertions are complementary:
\begin{enumerate}
    \item $G$ contains a ray;
    \item $G$ has a rank.
\end{enumerate}
\end{lemma}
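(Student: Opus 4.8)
The plan is to establish the two implications separately, showing that (i) and (ii) are mutually exclusive and jointly exhaustive. The statement is Schmidt's classical result that having a rank is complementary to containing a ray, and both directions should follow by transfinite induction on the rank together with the recursive structure of the rank definition.

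First I would show that at most one of (i) and (ii) holds, i.e.\ that a graph with a rank is rayless. I proceed by transfinite induction on the rank $\alpha$ of $G$. If $\alpha=0$ then $G$ is finite and hence trivially rayless. For $\alpha>0$, suppose for contradiction that $G$ has rank $\alpha$ and contains a ray $R$. By definition there is a finite vertex set $X$ such that every component of $G-X$ has rank $<\alpha$. Since $X$ is finite, some tail of $R$ avoids $X$ and therefore lies in a single component $C$ of $G-X$; this tail is a ray in $C$. But $C$ has rank $<\alpha$, so by the induction hypothesis $C$ is rayless, a contradiction.

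For the converse, I would show that every rayless graph $G$ has a rank. The natural approach is to argue that if $G$ is rayless then one can exhibit a finite vertex set $X$ whose removal leaves only components that are ``simpler'', so that a well-foundedness argument assigns $G$ a rank. Concretely, I would first reduce to the connected case, since a graph has a rank if and only if each of its components does (and the rank is the supremum over the components, using finiteness of each deletion). For connected rayless $G$, I would invoke Theorem~\ref{CombTreeDualityI} with $U:=V(G)$: the absence of a ray means $G$ contains no comb attached to $V(G)$, so $G$ has a rayless normal spanning tree $T$. The tree-order of $T$ is then well-founded with no infinite chains, and I would use the ordinal height of $T$ (the rank of the well-founded tree-order) to drive a transfinite induction assigning $G$ a Schmidt rank: deleting the bottom level of $T$, which is dispersed and finite in the relevant local sense, leaves components spanned by the up-closures $\guc{x}$ of lower height, to which the induction hypothesis applies.

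The main obstacle I anticipate is the inductive step in the converse direction: producing, for a rayless connected graph, a \emph{finite} set $X$ after whose deletion \emph{every} component has strictly smaller rank. A rayless normal spanning tree need not be locally finite, so the bottom level alone need not be finite, and one must choose $X$ carefully — for instance the root together with finitely many vertices separating off components of smaller tree-height — while ensuring the induction hypothesis applies to each resulting component. Managing this finiteness requirement, and verifying that the assigned ordinal is genuinely a Schmidt rank rather than merely a bound, is the delicate part; the mutual-exclusivity direction, by contrast, is a routine transfinite induction.
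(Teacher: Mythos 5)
Your proof is correct, but it takes a genuinely different route from the paper's. The paper never proves this lemma in isolation: it cites Schmidt, and what it actually proves is the generalisation, Theorem~\ref{CombRankDuality}, of which the lemma is the special case $U=V(G)$ (the rank of $G$ is its $V(G)$-rank, and containing a comb attached to $V(G)$ is the same as containing a ray). There the hard direction is handled in contrapositive form by a direct construction: if $G$ has no rank, then some component $C_0$ has none, some component $C_1$ of $C_0-u_0$ has none, and so on; stringing together suitably chosen paths $P_1,P_2,\ldots$ yields a comb, i.e.\ a ray. That argument is elementary and self-contained. You instead prove directly that a rayless graph has a rank, by invoking Theorem~\ref{CombTreeDualityI} with $U=V(G)$ to obtain a rayless normal spanning tree $T$ and then doing transfinite induction on the ordinal height of $T$. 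This is legitimate --- there is no circularity, since the proof of Theorem~\ref{CombTreeDualityI} nowhere uses ranks --- and it buys you an explicit ordinal bound (the rank of $G$ is at most the height of $T$, in the spirit of the paper's Lemma~\ref{UrankNormalTree}), at the price of importing Jung's normal-tree machinery where the paper's construction needs nothing. Your easy direction (a ranked graph is rayless) is the same routine transfinite induction the paper alludes to.

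The one obstacle you flag is not real. The bottom level of a rooted tree is the root alone, so your deletion set is the singleton $X=\{r\}$ regardless of whether $T$ is locally finite; the root having infinitely many children is harmless, because the definition of rank requires only the deleted set to be finite, not the number of resulting components. Concretely, by Lemma~\ref{NTseparationAndComponents}~(ii) applied to the down-closed set $W=\{r\}$, the components of $G-r$ are exactly the graphs $G[\uc{x}]$ with $x$ a child of $r$; each is spanned by the rayless normal tree $T[\uc{x}]$ (normality and spanning are inherited), whose height is strictly smaller, so the induction hypothesis applies and $G$ gets a rank at most the height of $T$. One small inaccuracy elsewhere: in your reduction to the connected case, the rank of a disconnected graph need not equal the supremum of the ranks of its components (an infinite disjoint union of finite graphs has rank $1$, not $0$); but only existence is needed there, and that part of your argument is fine.
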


Now we introduce the notion of a $U$-\emph{rank}, based on the Schmidt rank, which additionally takes into account a fixed set $U$.
For this, suppose that $U$ is any set.
Even though, formally, $U$ is an arbitrary set, we think of $U$ as a set of vertices.
Let us assign $U$-\emph{rank} 0 to all the graphs that contain at most finitely many vertices from~$U$.
Given an ordinal $\alpha>0$, we assign $U$-\emph{rank} $\alpha$ to every graph $G$ that does not already have a $U$-rank $\beta<\alpha$ and which has a finite set $X$ of vertices such that every component of $G-X$ has some $U$-rank $<\alpha$.
%Here we wrote $H$ instead of $G$ since $H$ is not required to be connected while $G$ is connected by our convention.
Note that the rank of $G$ is equal to the $V$-rank of~$G$.

The $U$-rank behaves quite similarly to the Schmidt rank, \cite[p.~243]{DiestelBook5}:
When disjoint graphs $G_i$ have $U$-ranks $\alpha_i<\alpha$, their union clearly has a $U$-rank of at most $\alpha$; if the union is finite, it has $U$-rank $\max_i\alpha_i$.
Induction on $\alpha$ shows that subgraphs of graphs of $U$-rank $\alpha$ also have a $U$-rank of at most $\alpha$.
Conversely, joining finitely many new vertices to a graph, no matter how, will not change its $U$-rank.

Not every graph has a $U$-rank.
Indeed, a comb \at $U$ cannot have a $U$-rank, since deleting finitely many of its vertices always leaves a component that is a comb \at $U$.
As subgraphs of graphs with a $U$-rank also have a $U$-rank, this means that only graphs without such combs can have a $U$-rank.
But all these~do:

\begin{mainresult}\label{CombRankDuality} 
Let $G$ be any graph and let $U$ be any set.
Then \tfad
\begin{enumerate}
    \item $G$ contains a comb \at $U$;
    \item $G$ has a $U$-rank.
\end{enumerate}
\end{mainresult}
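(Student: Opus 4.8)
The plan is to dispose of the easy half by citation and then reduce the substantive direction to a single transfinite induction along a decomposition tree. The implication (i)$\,\to\neg$(ii) is already contained in the discussion preceding the theorem: a comb \at $U$ has no $U$-rank, and subgraphs of graphs with a $U$-rank again have a $U$-rank, so any graph containing such a comb has none. It therefore remains to prove $\neg$(i)$\,\to$(ii), that every graph without a comb \at $U$ has a $U$-rank. Before invoking the earlier duality theorems (which require $G$ connected) I would reduce to the connected case. Since a comb is connected, $G$ contains a comb \at $U$ if and only if some component of $G$ does, and replacing $U$ by $U\cap V(G)$ changes neither assertion. Granting the connected case, each component $C$ of $G$ has some $U$-rank $\alpha_C$, and then the empty set as separating set witnesses that $G$ has a $U$-rank of at most $(\sup_C\alpha_C)+1$, because the components of $G-\emptyset$ are exactly the $C$. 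So it suffices to treat connected $G$ with $U\subseteq V(G)$ and no comb \at $U$.

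For such $G$, Theorem~\ref{CombTreeDualityII} provides a rayless tree-decomposition $(T,\cV)$ whose parts each meet $U$ in at most finitely many vertices and whose parts at non-leaves of $T$ are finite. I would root $T$, write $s\ge t$ when $s$ lies in the subtree above $t$, and set $H_t:=G\bigl[\bigcup_{s\ge t}V_s\bigr]$. As $T$ is rayless it has no infinite ascending chain, so every node carries a well-defined ordinal tree-rank, assigning $0$ to leaves and $\sup\{\,\mathrm{rk}(c)+1\mid c\text{ a child of }t\,\}$ otherwise. The goal is to prove by induction on this rank that each $H_t$ has a $U$-rank; since $H_r=G$ for the root $r$, this gives (ii).

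The induction is short. If $t$ is a leaf then $H_t=G[V_t]$ meets $U$ finitely and so has $U$-rank $0$. If $t$ is a non-leaf then $V_t$ is finite, and I would take $X:=V_t$ as the separating set. By Lemma~\ref{TDCseps} the adhesion set $V_t\cap V_{t_i}$ of each edge $tt_i$ (with $t_i$ a child of $t$) separates $H_{t_i}$ from the remainder of $H_t$; since $V_t\cap V(H_{t_i})=V_t\cap V_{t_i}\subseteq V_t$, every component of $H_t-V_t$ is a component of some $H_{t_i}-(V_t\cap V_{t_i})$, hence a subgraph of $H_{t_i}$. By the induction hypothesis $H_{t_i}$ has a $U$-rank $\alpha_i$, so every component of $H_t-V_t$ has $U$-rank at most $\beta:=\sup_i\alpha_i$, and therefore $H_t$ has a $U$-rank, namely at most $\beta+1$.

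The only point needing genuine care is this component analysis: one must check that deleting the finite part $V_t$ really does break $H_t$ into pieces each confined to a single child branch $H_{t_i}$, which is precisely where the finite adhesion sets $V_t\cap V_{t_i}\subseteq V_t$ and Lemma~\ref{TDCseps} enter. Well-foundedness of the induction is guaranteed solely by $T$ being rayless, and the remaining facts—that subgraphs inherit a $U$-rank, that disjoint unions combine $U$-ranks, and that suprema of ordinals exist—are the routine properties of the $U$-rank recorded before the theorem. I would finally remark that one can equally run the same induction directly on the rayless normal tree of Theorem~\ref{CombTreeDualityI}, using generalised up-closures in place of the branches $H_t$; the two arguments coincide in substance, since the tree-decomposition of Theorem~\ref{CombTreeDualityII} was itself built from that normal tree.
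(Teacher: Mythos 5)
Your proposal is correct in substance but takes a genuinely different route from the paper's. The paper does not invoke Theorem~\ref{CombTreeDualityI} or~\ref{CombTreeDualityII} at all: it proves the contrapositive direction $\neg$(ii)$\to$(i) by a short, self-contained recursion. Assuming $G$ has no $U$-rank, some component $C_0$ has none (and then $U\cap C_0$ is infinite, since otherwise $C_0$ had $U$-rank $0$); picking $u_0\in U\cap C_0$, some component $C_1$ of $C_0-u_0$ again has no $U$-rank, and iterating this --- choosing $u_n\in U\cap C_n$ and suitable paths $P_n$ with $\mathring{P}_n\subset C_n$ --- yields a comb with attachment set $\{\,u_n\mid n\in\N\,\}\subset U$. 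That argument works verbatim for disconnected $G$ and arbitrary $U$, so the paper needs no reduction to the connected case, and it keeps Theorem~\ref{CombRankDuality} logically independent of the other duality theorems. You instead prove $\neg$(i)$\to$(ii), converting the rayless tree-decomposition of Theorem~\ref{CombTreeDualityII} into an ordinal witness by transfinite induction along the decomposition tree. There is no circularity in doing so (Theorem~\ref{CombTreeDualityII} rests only on Jung's results), and your route buys an explicit link between the rank of the decomposition tree and the $U$-rank of $G$, which the paper only establishes later (Lemma~\ref{UrankNormalTree} and the fourth bullet of Theorem~\ref{CombStrongDual}); the price is the dependence on Theorem~\ref{CombTreeDualityII} and the connectedness reduction, which you do carry out correctly.

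One step of your induction needs repair. You root $T$ arbitrarily and assert that every non-leaf part $V_t$ is finite, but Theorem~\ref{CombTreeDualityII} guarantees finiteness only for parts at non-leaves of $T$ in the graph-theoretic sense, i.e.\ at nodes of degree at least two. If the root $r$ happens to be a degree-one node of $T$, then $r$ has a child in the rooted sense and yet $V_r$ may be infinite, so your induction step at $t=r$ takes an infinite separating set $X:=V_r$ and fails; the decomposition tree consisting of a single edge with two infinite parts already exhibits this. The repair is easy: if $T$ has a node of degree at least two, root $T$ there --- then every node with a child either is the root or has both a parent and a child, hence has degree at least two and a finite part --- and if $T$ has no such node, then $T$ has at most two nodes, so $V(G)$ is covered by at most two parts, each meeting $U$ finitely, and $G$ has $U$-rank $0$ outright. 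With this adjustment the rest is sound: the well-foundedness of the recursion follows from raylessness of $T$, Lemma~\ref{TDCseps} confines each component of $H_t-V_t$ to a single branch $H_{t_i}$ as you argue, and Lemma~\ref{UrankAlgebra}~(ii) together with the induction hypothesis bounds its $U$-rank, as does the empty separator in your initial reduction to connected~$G$.
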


\noindent Phrased differently, the $U$-rank provides a recursive definition of the class of the graphs in which $U$ is dispersed.

\begin{proof}[Proof of Theorem~\ref{CombRankDuality}]
We show the equivalence (i)$\,{\leftrightarrow}\,\neg$(ii).
The forward implication has already been pointed out above.
For the backward implication suppose that $G$ has no $U$-rank; we show that $G$ must contain a comb \at $U$.
As $G$ has no $U$-rank, one of its components, $C_0$ say, has no $U$-rank as well.
Pick $u_0\in U\cap C_0$ arbitrarily.
Since $C_0$ has no $U$-rank, it follows that $C_0-u_0$ has a component $C_1$ that has no $U$-rank; let $u_1\in U\cap C_1$ and pick a $u_0$--$u_1$ path $P_1$ in $C_0$ with $\mathring{P}_1\subset C_1$.
Next, delete $P_1$ from $C_1$ and let $C_2\subset C_1-P_1$ be a component that has no $U$-rank.
Let $u_2\in U\cap C_2$, pick any $P_1$--$u_2$ path $P_2$ in $C_1$ with $\mathring{P}_2\subset C_2$ and note that $P_2$ meets $P_1$ in $\mathring{u}_1P_1$.
Therefore, if we continue inductively to find paths $P_1,P_2,\ldots$ in~$G$, then their union $\bigcup_n P_n$ is a comb with attachment set $\{\,u_n\mid n\in\N\,\}\subset U$.
\end{proof}

There is a way to see immediately that for a connected graph $G$ having a $U$-rank is stronger than $G$ containing a star \at $U$ when $U$ is infinite.
For this, suppose that $G$ has $U$-rank $\alpha$.
Then $\alpha>0$ as $U\subset V(G)$ is infinite.
Hence $G$ has a finite set $X$ of vertices such that every component of $G-X$ has some $U$-rank~$<\alpha$.
In particular, $G-X$ must have some infinitely many components that meet~$U$.
Each of these components gives some $U$--$X$ path avoiding all other components, so the pigeon-hole principle yields a star \at $U$ as desired.

The $U$-rank of a graph has many properties.
In the remainder of this section, we prove three such properties that we will put to use in the next section.

\begin{lemma}\label{UrankAlgebra}
Let $G$ be any graph, let $U$ be any set and suppose that $G$ has $U$-rank~$\alpha$.
Then the following assertions hold:
\begin{enumerate}
    \item for every subset $U'\subset U$ the graph $G$ has $U'$-rank $\le\alpha$;
    \item for every subgraph $H\subset G$ the graph $H$ has $U$-rank $\le\alpha$.
\end{enumerate}
\end{lemma}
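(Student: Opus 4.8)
The plan is to prove both assertions by transfinite induction on the $U$-rank $\alpha$ of $G$, running (i) and (ii) as two separate inductions; neither needs the other.

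For (i), fix $U'\subset U$. The base case $\alpha=0$ is immediate: if $G$ contains at most finitely many vertices from $U$, then a fortiori it contains at most finitely many from $U'$, so $G$ has $U'$-rank $0$. For $\alpha>0$, I would take a finite vertex set $X$ witnessing the $U$-rank of $G$, so that every component of $G-X$ has some $U$-rank $\beta<\alpha$. Applying the induction hypothesis to each such component $C$ gives that $C$ has $U'$-rank at most its $U$-rank, which is $<\alpha$. Hence the same set $X$ witnesses that every component of $G-X$ has $U'$-rank $<\alpha$, so $G$ has a $U'$-rank and it is at most $\alpha$.

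For (ii), the base case is again immediate, since a subgraph $H\subset G$ of a graph with only finitely many vertices in $U$ also has only finitely many. For $\alpha>0$, fix a finite set $X$ witnessing the $U$-rank of $G$ and let $H\subset G$ be given. Put $X_H:=X\cap V(H)$, which is finite. The key observation is that $H-X_H=H-X$ is a subgraph of $G-X$, so every component $D$ of $H-X_H$, being connected, lies inside a single component $C$ of $G-X$. Since $C$ has $U$-rank $<\alpha$, the induction hypothesis applied to the subgraph $D\subset C$ gives that $D$ has $U$-rank at most the $U$-rank of $C$, hence $<\alpha$. Thus $X_H$ witnesses that $H$ has $U$-rank at most $\alpha$.

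Both inductions are routine; the only point needing care is the component containment in (ii)---that deleting the finite set $X$ from $H$ cannot merge pieces lying in different components of $G-X$---which holds because $H-X_H$ embeds as a subgraph of $G-X$ and components respect connectivity. Beyond this, the main thing to keep straight is the bookkeeping of the phrase \emph{``$G$ has $U$-rank at most $\alpha$''} against the recursive definition: exhibiting a finite witness set at level $\alpha$ shows that the rank exists and does not exceed $\alpha$, even though it might be strictly smaller.
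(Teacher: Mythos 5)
Your proof is correct and takes exactly the approach the paper intends: the paper's own proof of this lemma is simply the phrase ``Induction on $\alpha$,'' and your write-up supplies precisely the routine details (the witness set $X$ carried over in (i), and $X\cap V(H)$ with the component-containment observation in (ii)) that this one-line proof leaves to the reader.
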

\begin{proof}
Induction on $\alpha$.
\end{proof}

\begin{lemma}\label{raylessTreeRankIsUrank}
Let $U$ be any set.
If $T$ is a rooted rayless tree containing $U\cap V(T)$ cofinally, then the $U$-rank of $T$ is equal to the rank of $T$.
\end{lemma}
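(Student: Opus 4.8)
The plan is to establish the two inequalities separately: that the $U$-rank of $T$ is at most its rank, and conversely. Both quantities exist, since $T$ is rayless: it therefore contains no comb \at $U$ and so has a $U$-rank by Theorem~\ref{CombRankDuality}, and being rayless it also has a rank. The first inequality I would get for free from Lemma~\ref{UrankAlgebra}~(i): the $U$-rank of $T$ depends only on $U\cap V(T)$, so applying that lemma with ambient set $V(T)$ (for which the $V(T)$-rank of $T$ is precisely its rank) and the subset $U\cap V(T)\subset V(T)$ shows that the $(U\cap V(T))$-rank of $T$, which equals its $U$-rank, is at most the rank of $T$.

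The substance lies in the reverse inequality, which I would prove by transfinite induction on the $U$-rank $\alpha$, the claim being that every rooted rayless tree $T$ with $U\cap V(T)$ cofinal in $T$ and $U$-rank $\alpha$ has rank at most $\alpha$ (with $U$ the fixed set of the statement). For $\alpha=0$ the set $U\cap V(T)$ is finite, and cofinality then forces $V(T)=\bigcup_{u\in U\cap V(T)}\dc{u}$, a finite union of the finite root-paths $\dc{u}$; hence $T$ is finite and has rank $0$. For $\alpha>0$ I would pick a finite set $X\subset V(T)$ witnessing the $U$-rank, so that every component of $T-X$ has $U$-rank $<\alpha$, and then replace $X$ by its down-closure $\dc{X}=\bigcup_{x\in X}\dc{x}$, which is again finite. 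Every component of $T-\dc{X}$ is contained in a component of $T-X$, so by Lemma~\ref{UrankAlgebra}~(ii) it still has $U$-rank $<\alpha$; thus we may assume $X$ down-closed.

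The key step, and the point I expect to be the only delicate one, is to apply the induction hypothesis to the components of $T-X$, which requires each of them to again contain $U$ cofinally — and this is exactly where down-closedness of $X$ is used. Since $X$ is down-closed, each component $C$ of $T-X$ has a unique minimal vertex $r_C$ and equals its up-closure $\uc{r_C}$ (as $\uc{r_C}$ is connected, contains $r_C$, and avoids $X$), so $C$ is a rooted rayless subtree; moreover cofinality is inherited, because for $t\in\uc{r_C}$ any $u\in U$ with $t\le u$ furnished by cofinality in $T$ satisfies $r_C\le t\le u$ and hence lies in $\uc{r_C}=V(C)$. The induction hypothesis then yields that each such $C$ has rank equal to its $U$-rank, which is $<\alpha$. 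Consequently the same set $X$ witnesses that every component of $T-X$ has rank $<\alpha$, so the rank of $T$ is at most $\alpha$; combined with the first inequality this gives equality. The verification that the components are up-closures inheriting the cofinality is the crux, while the remaining bookkeeping with ranks is routine.
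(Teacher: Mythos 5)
Your proof is correct and takes essentially the same route as the paper's: the inequality ($U$-rank $\le$ rank) via Lemma~\ref{UrankAlgebra}~(i) applied to $U\cap V(T)\subset V(T)$, and the converse by induction on the $U$-rank with the witnessing set $X$ replaced by its down-closure using Lemma~\ref{UrankAlgebra}~(ii). The cofinality-inheritance argument for the components of $T-X$, which you rightly identify as the crux, is exactly the detail the paper's terse proof leaves implicit.
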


\noindent Here we remark that, in this paper, we consider the Schmidt rank of rayless graphs as discussed in Section~\ref{subsection:CombRank}.
In particular, when we consider the rank of a (possibly rooted) tree, we do not mean the rank for rooted trees that defines recursive prunability (cf.~\cite[p.~242 \&~243]{DiestelBook5}).

\begin{proof}[{Proof of Lemma~\ref{raylessTreeRankIsUrank}}]
Let $\alpha$ be the $U$-rank of $T$ and let $\beta$ be the rank of $T$. 
Since the $V(T)$-rank of $T$ is the same as the rank of $T$, Lemma~\ref{UrankAlgebra}~(i) gives the inequality $\alpha \le \beta$. 
An induction on $\alpha$ shows the converse inequality (in the induction step consider a set $X\subseteq V(T)$ witnessing that $T$ has $U$-rank $\alpha$ and employ the induction hypothesis to see that every component of $T-X$ has rank $<\alpha$; it is convenient to assume $X$ to be down-closed, which is possible by Lemma~\ref{UrankAlgebra}~(ii)).
\end{proof}

\newpage
\begin{lemma}\label{UrankNormalTree}
If $G$ is any graph and $T\subset G$ is a rayless normal tree containing $U\cap G$ cofinally, then the following three ordinals are all equal:
\begin{enumerate}
    \item the rank of $T$;
    \item the $U$-rank of $T$;
    \item the $U$-rank of $G$.
\end{enumerate}
\end{lemma}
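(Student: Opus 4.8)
The plan is to chain the three quantities together through a small cycle of inequalities. The equality of (i) and (ii) is immediate from Lemma~\ref{raylessTreeRankIsUrank}, since $T$ is a rooted rayless tree that contains $U\cap V(T)=U\cap G$ cofinally. For the equality of (ii) and (iii) I would prove the two inequalities separately. The inequality $\text{(ii)}\le\text{(iii)}$ follows from Lemma~\ref{UrankAlgebra}~(ii) applied to the subgraph $T\subset G$, once we know that $G$ has a $U$-rank at all; that fact is delivered by the harder inequality. So the real work is to establish $\text{(iii)}\le\text{(i)}$: that $G$ has a $U$-rank and that it is at most the rank $\alpha$ of $T$.

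I would prove $\text{(iii)}\le\text{(i)}$ by transfinite induction on $\alpha=\operatorname{rank}(T)$, with the statement quantified over \emph{all} graphs $G$ and all rayless normal trees $T\subset G$ of rank $\alpha$ that contain $U\cap G$ cofinally. For $\alpha=0$ the tree $T$ is finite, so $U\cap G\subset V(T)$ is finite and $G$ has $U$-rank $0$. For $\alpha>0$, pick a finite $X\subset V(T)$ witnessing $\operatorname{rank}(T)=\alpha$, so that every component of $T-X$ has rank $<\alpha$. Replacing $X$ by its down-closure in $T$ (which is still finite, being a finite union of root-paths, and which only shrinks the components of $T-X$) I may assume that $X$ is down-closed in $T$. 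Now I apply Lemma~\ref{NTseparationAndComponents}~(ii) with $W:=X$ to analyse the components of $G-X$.

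By that lemma, each component of $G-X$ either avoids $T$ or has the form $C=G[\guc{x}]$ for some $x$ minimal in $T-X$. A component avoiding $T$ contains no vertex of $U\cap G\subset V(T)$ and hence has $U$-rank $0$. For a component $C=G[\guc{x}]$ I set $T_x:=T[\uc{x}]$, which is precisely the component of $T-X$ containing $x$ and therefore a rayless tree of rank $<\alpha$. The crucial point is that $T_x$ is a normal tree \emph{in $C$} containing $U\cap V(C)$ cofinally: every vertex of $C$ outside $\uc{x}$ lies in a component of $G-T$, so $\guc{x}\cap V(T)=\uc{x}$ and any $T_x$-path in $C$ has its interior disjoint from all of $T$; such a path is thus a $T$-path in $G$, whence its endvertices are comparable in $T$ by normality of $T$ in $G$, so $T_x$ is normal in $C$. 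Moreover $U\cap V(C)=U\cap\uc{x}$, and since $U\cap G$ is cofinal in $T$ and $\uc{x}$ is an up-set, this set is cofinal in $T_x$. The induction hypothesis, applied to $C$ with the tree $T_x$, gives that $C$ has $U$-rank at most $\operatorname{rank}(T_x)<\alpha$. Hence $X$ is a finite vertex set all of whose components of $G-X$ have $U$-rank $<\alpha$, so $G$ has a $U$-rank of at most $\alpha$, completing the induction.

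With $\text{(iii)}\le\text{(i)}$ established, $G$ indeed has a $U$-rank, so Lemma~\ref{UrankAlgebra}~(ii) applied to the subgraph $T\subset G$ yields $\text{(ii)}\le\text{(iii)}$; together with $\text{(i)}=\text{(ii)}$ this closes the cycle $\text{(iii)}\le\text{(i)}=\text{(ii)}\le\text{(iii)}$ and forces all three ordinals to coincide. I expect the main obstacle to be the verification that $T_x$ is normal in $C$ and contains $U\cap V(C)$ cofinally---that is, checking that the induction hypothesis genuinely applies to each piece $G[\guc{x}]$---which is exactly the step where normality of $T$ in $G$ and cofinality of $U$ in $T$ are both needed.
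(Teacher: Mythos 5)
Your proof is correct and follows essentially the same route as the paper's: equality of (i) and (ii) via Lemma~\ref{raylessTreeRankIsUrank}, the easy inequality via Lemma~\ref{UrankAlgebra}, and the hard inequality by transfinite induction using a down-closed witness $X$ and Lemma~\ref{NTseparationAndComponents}~(ii) to reduce each component $G[\guc{x}]$ of $G-X$ to the normal tree $T[\uc{x}]$ inside it. The only cosmetic differences are that you induct on the rank of $T$ rather than its $U$-rank (the same ordinal, given (i)${}={}$(ii)) and that you spell out more explicitly the verification that $T[\uc{x}]$ is normal in $C$ and the point that (ii)${}\le{}$(iii) presupposes the existence of the $U$-rank of $G$.
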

\begin{proof}
The equality (i)${}={}$(ii) is the subject of Lemma~\ref{raylessTreeRankIsUrank}.
Lemma~\ref{UrankAlgebra} gives the inequality (ii)${}\le{}$(iii).
We show the remaining inequality (iii)${}\le{}$(ii) by induction on the $U$-rank of $T$, as follows.

If the $U$-rank of $T$ is $0$, then $U\cap T=U\cap G$ is finite, and thus the $U$-rank of $G$ is $0$ as well.
For the induction step, suppose that $T$ has $U$-rank $\alpha> 0$, and let $X\subset V(T)$ be any finite vertex set such that every component of $T-X$ has $U$-rank~$<\alpha$.
By Lemma~\ref{UrankAlgebra}~(ii) we may assume that $X$ is down-closed in $T$.
It suffices to show that every component of $G-X$ has a $U$-rank $<\alpha$.

If $C$ is a component of $G-X$, then either $C$ avoids $T\supset U\cap C$ and has $U$-rank $0<\alpha$, or $C$ meets $T$.
In the case that $C$ meets $T$, by Lemma~\ref{NTseparationAndComponents} we know that $C$ is spanned by $\guc{x}$ with $x$ minimal in $T-X$, so $T\cap C\subset C$ is a normal tree containing $U\cap C$ cofinally.
Finally, by the induction hypothesis,
\[
(U\text{-rank of }C)\le (U\text{-rank of }T\cap C)<\alpha.\qedhere
\]
\end{proof}

% +++ Comb strong dual +++ 
\subsection{Combining the duality theorems}\label{subsection:CombStrongDual}

So far we have seen duality theorems for combs in terms of
normal trees, tree-decompositions, critical vertex sets and rank.
With these four complementary structures for combs at hand, the question arises whether it is possible to combine them all.
In this section we will answer the question in the affirmative.
That is, we will present a fifth complementary structure for combs that combines all of the four~above.

This fifth structure will be a `tame' tree-decomposition that is more specific than the tree-decomposition listed above.
It will stem from a normal tree in a way that we call `squeezed expansion'. 
Just like the tree-decomposition listed above, all its parts will meet $U$ finitely, and all its parts at non-leaves will be finite.
Moreover, it will display not only the ends in the closure of $U$, but also the critical vertex sets in the closure of~$U$.
In order to realise this, we will extend the definition of `display' in a reasonable way.
Finally, the decomposition tree will have a rank that is equal to the $U$-rank of the whole graph.
The combined duality theorem reads as follows:

\begin{mainresult}\label{CombStrongDual}
\TFAD
\begin{enumerate}
    \item $G$ contains a comb \at $U$;
    \item $G$ has a rooted tame tree-decomposition $(T,\cV)$ that covers $U$ cofinally and satisfies the following four assertions:
    \begin{itemize}
        \item[\textbf{--}] $(T,\cV)$ is the squeezed expansion of a normal tree in $G$ that contains the vertex set~$U$ cofinally;
        \item[\textbf{--}] every part of $(T,\cV)$ meets $U$ finitely and parts at non-leaves are finite;
        \item[\textbf{--}] $(T,\cV)$ displays $\AbsC{U}\subset\crit(G)$;
        \item[\textbf{--}] the rank of $T$ is equal to the $U$-rank of $G$.
    \end{itemize}
\end{enumerate}
\end{mainresult}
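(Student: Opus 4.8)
The plan is to prove the equivalence by showing that (ii) excludes a comb (the easy direction) and that the absence of a comb yields such a tree-decomposition (the substantive direction). Since Theorem~\ref{CombStrongDual} is meant to \emph{combine} Theorems~\ref{CombTreeDualityI}--\ref{CombRankDuality}, I would not reprove their content from scratch; instead I would construct a single tree-decomposition from a rayless normal tree and then verify, one bullet at a time, that it simultaneously inherits all four complementary structures. The exclusion direction (ii)$\to\neg$(i) is immediate: any tree-decomposition whose parts meet $U$ finitely and whose non-leaf parts are finite already rules out a comb \at $U$ by Lemma~\ref{CombTree_StarComb_Crit} together with Theorem~\ref{CombCritDuality} (a critical vertex set in the closure of some infinite $U'\subset U$ is incompatible with $U$ being dispersed, hence with a comb). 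So the real work is $\neg$(i)$\to$(ii).

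\textbf{Construction.} Assuming $G$ contains no comb \at $U$, Theorem~\ref{CombTreeDualityI} (with its `moreover' clause) supplies a rayless normal tree $\nt\subset G$ containing $U$ \emph{cofinally}. I would take the tree-decomposition built in the proof of Theorem~\ref{CombTreeDualityII}: form the decomposition tree $T$ from $\nt$ by attaching each component $C$ of $G-\nt$ as a new leaf adjacent to the maximal node $t_C$ of its (finite, by raylessness and normality) neighbourhood chain; set $V_t=\dc{t}_{\nt}$ for $t\in\nt$ and $V_C=\dc{t_C}_{\nt}\cup V(C)$ for the new leaves. This is the \emph{squeezed expansion} of $\nt$; it is rooted at the root of $\nt$, and it covers $U$ cofinally because $\nt$ does. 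The first two bullets are then exactly what the proof of Theorem~\ref{CombTreeDualityII} already establishes: parts meet $U$ finitely and non-leaf parts (those at $\nt$-nodes) are the finite down-closures $\dc{t}_{\nt}$, hence finite. Tameness and connectedness of separators also come from that proof's `moreover' part.

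\textbf{The two remaining bullets are the main obstacle.} For the display bullet I must extend `display' from ends to the full combinatorial remainder and show $(T,\cV)$ displays $\AbsC{U}\subset\crit(G)$. The key inputs are Lemma~\ref{NormalTreeNormalRay}, giving a bijection between $\Abs{\nt}$ and the normal rays of $\nt$, and Lemma~\ref{NormalTreeNormalCriticalVertexSet}, which forces every critical vertex set in the closure of $\nt$ to sit inside $\nt$ as a chain. Since $\nt$ contains $U$ cofinally, Lemma~\ref{NTcombinatorialClosureCofinal} gives $\AbsC{\nt}=\AbsC{U}$, and because $\nt$ is rayless no end lies in its closure, so $\AbsC{U}\subset\crit(G)$ as asserted; I would then check that the infinite-degree nodes of $T$ (equivalently, the nodes $x$ of $\nt$ below infinitely many newly-attached leaves or branches) correspond bijectively to exactly these critical vertex sets, matching each critical chain to the node where it is `squeezed'. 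For the rank bullet I invoke Lemma~\ref{UrankNormalTree}: the rank of $T$ should equal the rank of $\nt$, which equals the $U$-rank of $\nt$ (Lemma~\ref{raylessTreeRankIsUrank}), which equals the $U$-rank of $G$ (Lemma~\ref{UrankNormalTree} again). The one genuine subtlety is that $T$ differs from $\nt$ by the attached leaves $C$; I would argue that attaching leaves to a rayless tree does not change its Schmidt rank (leaves contribute rank $0$ and joining finitely many vertices, or here a single pendant edge per leaf, leaves the rank unchanged in the relevant recursive step), so that $\operatorname{rank}(T)=\operatorname{rank}(\nt)$ and the chain of equalities closes. Assembling these four verified bullets yields (ii), completing the proof.
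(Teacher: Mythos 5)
Your high-level architecture (get a rayless normal tree $\nt$ containing $U$ cofinally from Theorem~\ref{CombTreeDualityI}, then verify the four bullets), your exclusion direction, and your use of Lemma~\ref{NTcombinatorialClosureCofinal} and Lemma~\ref{UrankNormalTree} all match the paper. But there is a genuine gap at the core: the tree-decomposition from the proof of Theorem~\ref{CombTreeDualityII} --- parts $\dc{t}$ at nodes $t\in\nt$ and parts $\dc{t_C}\cup V(C)$ at new leaves, one for \emph{every} component $C$ of $G-\nt$ --- is \emph{not} the squeezed expansion of $\nt$, so the first bullet of (ii) fails as literally stated. The expansion (Definition~\ref{def: expansion}) is a different object: for each critical vertex set $X\in\AbsC{\nt}$ with $t\in X\subset\dc{t}$ it inserts a \emph{new node named $X$} between $t$ and the components in $\cC_{\dc{t}}(X)$, with separations $\rsep{X}{\cC_{\dc{t}}(X)}$ and $\rsep{X}{C}$; components of $G-\nt$ whose neighbourhood is \emph{not} such a critical set are absorbed into the part at their maximal neighbour rather than hung as leaves; and squeezing (Definition~\ref{def:squeezed}) adds at most one pendant node per finite-degree node with infinite part.

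This difference is precisely what makes the last two bullets true, and your decomposition fails both. In your tree every edge at a node $t\in\nt$ has separator the full down-closure $\dc{t}$, never a critical vertex set, so the separations at an infinite-degree node never have the star form $\rsep{X}{\cC}$, $\lsep{C}{X}$ demanded by Definition~\ref{def: display extended}, and infinite-degree nodes need not correspond to members of $\AbsC{U}$ at all. Concretely, let $\nt$ have root $r$, children $v_1,v_2,\ldots$, each $v_i$ with infinitely many children $u_{i,1},u_{i,2},\ldots$; let $G$ be $\nt$ together with, for each pair $(i,j)$, infinitely many new vertices joined exactly to $u_{i,j}$; and let $U=\{\,u_{i,j}\mid i,j\in\N\,\}$. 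Then $\nt$ is a rayless normal tree containing $U$ cofinally and $\AbsC{U}=\{\{r\}\}\cup\{\,\{v_i\}\mid i\in\N\,\}$, but in your $T$ every node $u_{i,j}$ has infinite degree while the critical vertex set $\{u_{i,j}\}$ lies outside $\AbsC{U}$ and lives at $u_{i,j}$ --- violating the display conditions. The rank bullet fails too: the $U$-rank of $G$ equals the rank of $\nt$, which is $2$, whereas your $T$ has rank $3$ (no finite set of nodes can be deleted from $T$ so that every remaining component has rank at most $1$). Indeed your claim that attaching leaves never changes the Schmidt rank is false in general; it holds when leaves are attached to nodes of \emph{infinite} degree, or at most one leaf per node, and Proposition~\ref{ExpansionProperties}~(vi) is careful to reduce to exactly these two operations plus a contraction minor with finite branch sets --- the expansion attaches the $\nt$-avoiding components in $\cC_{\dc{t}}(X)$ to the newly inserted infinite-degree nodes $X$, not to finite-degree nodes of $\nt$. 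So the proof has to be run with the actual squeezed expansion and Proposition~\ref{ExpansionProperties}, as the paper does; the decomposition of Theorem~\ref{CombTreeDualityII} cannot be substituted for it.
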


\begin{corollary}
If a connected graph $G$ is rayless (equivalently:\ if $G$ has a rank), then $G$ has a tame tree-decomposition into finite parts that displays the combinatorial remainder of $G$ and has a decomposition tree whose rank is equal to the rank of~$G$.\qed
\end{corollary}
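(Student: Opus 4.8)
The plan is to derive the Corollary directly from Theorem~\ref{CombStrongDual} by specialising to $U=V(G)$. First I would observe that the hypothesis "$G$ is rayless" is, for $U=V(G)$, exactly the negation of assertion~(i) of Theorem~\ref{CombStrongDual}: a comb \at $V(G)$ is just a comb in $G$ (with the mild caveat that an isolated or finite $G$ is trivially rayless and handled directly), so the absence of a comb \at $V(G)$ is the absence of a ray. The parenthetical equivalence "equivalently:\ if $G$ has a rank" is then justified by noting that the rank of $G$ is the $V(G)$-rank of $G$ (as remarked just after the definition of $U$-rank), so by Theorem~\ref{CombRankDuality} with $U=V(G)$ the graph $G$ has a rank if and only if it contains no comb \at $V(G)$, i.e.\ if and only if $G$ is rayless.

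Having established $\neg$(i), Theorem~\ref{CombStrongDual} hands us assertion~(ii): a rooted tame tree-decomposition $(T,\cV)$ covering $V(G)$ cofinally and satisfying the four bulleted properties. I would then translate each property into the language of the Corollary under the specialisation $U=V(G)$. Since every part meets $U=V(G)$ finitely, every part is simply finite; in particular there is no longer any distinction between leaf and non-leaf parts, so the decomposition is into finite parts as claimed. Next, the displayed set is $\AbsC{U}=\AbsC{V(G)}$, which is the set of all $\gamma\in\Gamma(G)$ lying in the closure of $V(G)$; every end and every critical vertex set of $G$ trivially lies in the closure of the whole vertex set, so $\AbsC{V(G)}=\Gamma(G)$, the combinatorial remainder. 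Thus $(T,\cV)$ displays $\Gamma(G)$, exactly as the Corollary requires. Finally, the fourth bullet gives that the rank of $T$ equals the $U$-rank of $G$, which for $U=V(G)$ is just the rank of $G$; so the decomposition tree has rank equal to the rank of $G$.

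The only genuine subtlety, and the step I expect to require a word of care, is the boundary bookkeeping in the equivalence "$G$ rayless $\Leftrightarrow$ $G$ has a rank" and the identification of a comb \at $V(G)$ with a ray: one must make sure a comb \at $V(G)$ really does certify a ray in $G$ (it contains its spine, a ray) and conversely that any ray extends to a comb \at $V(G)$ (each spine vertex is its own trivial tooth), which is immediate. Beyond that the argument is a pure unravelling of definitions, so the Corollary follows with no further construction—every structural claim is inherited verbatim from Theorem~\ref{CombStrongDual}. This is why the printed proof can be discharged with \qed and no displayed argument.
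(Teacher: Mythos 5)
Correct, and essentially identical to the paper's (implicit) argument: the corollary is exactly Theorem~\ref{CombStrongDual} specialised to $U=V(G)$, combined with the observations that a comb attached to $V(G)$ is the same thing as a ray, that the $V(G)$-rank of $G$ is its rank, and that $\AbsC{V(G)}=\Gamma(G)$. This pure unravelling of definitions is precisely why the paper states the corollary with no displayed proof.
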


The proof of the theorem above is organised as follows. 
First, we will state Proposition~\ref{ExpansionProperties}, which lists some useful properties of squeezed expansions.  
Then, we will employ this proposition in a high level proof of Theorem~\ref{CombStrongDual}. 
In order to follow the line of argumentation up to here, it is not necessary to know the definitions of `tame', `display' and `squeezed' `expansion', which is why we will introduce them subsequently to our high level proof. Finally, we will prove Proposition~\ref{ExpansionProperties}.

\begin{proposition}\label{ExpansionProperties}
Let $G$ be any graph and suppose that $\nt\subset G$ is a normal tree such that every component of $G-\nt$ has finite neighbourhood, that $(T,\cV)$ is the expansion of $\nt$ and that $(T',\cW)$ is a squeezed $(T,\cV)$.
Then the following assertions hold: 
\begin{enumerate}
    \item $(T,\cV)$ is upwards connected;
    \item both $(T,\cV)$ and $(T',\cW)$ display $\AbsC{\nt}$ (in particular, both are tame);
    \item all the parts of $(T,\cV)$ and $(T',\cW)$ meet $\nt$ finitely;
    \item parts of $(T',\cW)$ at non-leaves of $T'$ are finite;
    \item $T'$ is rayless if and only if $T$ is rayless if and only if $\nt$ is rayless;
    \item if one of $T'$, $T$ and $\nt$ is rayless, then the ranks of $T'$, $T$ and $\nt$ all exist and are all equal.
\end{enumerate}
\end{proposition}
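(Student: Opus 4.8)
The plan is to read all six properties off the normal tree $\nt$ for the expansion $(T,\cV)$ first, via Lemma~\ref{NTseparationAndComponents}, and then to transfer each property to the squeezed $(T',\cW)$ by tracking how squeezing modifies the parts and the decomposition tree. The unifying observation is that every feature in question is governed by the tree-order of $\nt$ together with the finite neighbourhoods of the components of $G-\nt$.

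I would dispatch (i), (iii) and (iv) by inspection. For (i), fix an edge $\ve$ pointing away from the root, with $(A,B)=\alpha(\ve)$ and head $t$. The far side $B$ consists of a finite down-closure chain $\dc{t}$ together with the generalised up-closure $\guc{t}$, which meet in $t$; since $\guc{t}$ is connected exactly as in the proof of Lemma~\ref{NTseparationAndComponents}~(ii) and the chain is connected, $G[B]$ is connected. For (iii), each part of $(T,\cV)$ meets $V(\nt)$ only in a down-closure $\dc{x}$, which is a finite chain because $\nt$ is a tree; as squeezing merely relocates vertices of $G-\nt$, this bound survives in $(T',\cW)$. For (iv), the role of squeezing is precisely to push the possibly infinite contributions of the components of $G-\nt$ out of the interior parts and into parts sitting at newly created leaves, so that every part at a non-leaf of $T'$ retains only its finite down-closure.

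For (ii) the argument splits into an end part and a critical part. Since the separators of $(T,\cV)$ are upwards disjoint and non-empty and, by (i), the decomposition is upwards connected, Lemma~\ref{DisplayLemma} shows that $T$ displays exactly the ends of $G$ that correspond to ends of $T$; Lemma~\ref{NormalTreeNormalRay} identifies this set of ends with $\Abs{\nt}$ by sending each such end to the unique normal ray of $\nt$ it contains. For the critical part, Lemma~\ref{NormalTreeNormalCriticalVertexSet} shows that every critical vertex set in the closure of $\nt$ is a chain $\dc{x}$ of $\nt$, and these chains are exactly the ones lying below the infinite-degree nodes of the decomposition tree; matching them to those nodes realises the extended requirement that the closure-of-$\nt$ critical vertex sets be in bijection with the infinite-degree nodes. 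The two matchings combine to a display of $\AbsC{\nt}$, and since squeezing preserves both the ends of the decomposition tree and its infinite-degree nodes arising from $\nt$, the display passes to $(T',\cW)$; tameness is then immediate.

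The real work lies in (v) and (vi). For (v) I would observe that both forming the expansion and squeezing change the underlying tree only by attaching pendant leaves and relocating vertices of $G-\nt$ into parts; as a leaf cannot lie on a ray, neither operation creates or destroys a ray of the decomposition tree, which gives the three-way equivalence. For (vi) the main obstacle is that squeezing genuinely alters the tree, so rank-preservation is not automatic: attaching one fresh leaf per component of $G-\nt$ would in general raise the rank, since a vertex with infinitely many pendant leaves has rank $1$ while an isolated vertex has rank $0$. The resolution is that squeezing \emph{bundles} all components sharing a common top vertex into a single leaf, so that the decomposition tree arises from $\nt$ by attaching at most one pendant leaf per node. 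A transfinite induction on the rank of $\nt$—using that adding pendant leaves throughout a tree leaves its rank unchanged, since a finite set witnessing the rank of $\nt$ still witnesses the same rank after the leaves have been added and since passing to a subgraph cannot increase the rank (cf.\ Lemma~\ref{UrankAlgebra})—then shows that the ranks of $\nt$, $T$ and $T'$ all coincide, and Lemma~\ref{UrankNormalTree} together with Lemma~\ref{raylessTreeRankIsUrank} identifies this common value with the $U$-rank of $G$. Making the bundling step precise, so that it is simultaneously rank-neutral and compatible with the critical-set display of (ii), is where I expect the argument to demand the most care.
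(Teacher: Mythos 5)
Your proposal rests on a mischaracterisation of what the expansion does to the tree, and this breaks your arguments for (v) and, more seriously, for (vi). The tree $T$ is \emph{not} obtained from $\nt$ by attaching pendant leaves: for every infinite-degree node $t\in\nt$ and every critical vertex set $X\in\AbsC{\nt}$ with $t\in X\subset\dc{t}$, the expansion inserts $X$ as a \emph{new internal node of infinite degree} between $t$ and the infinitely many components $C\in\cC_{\dc{t}}(X)$ --- those meeting $\nt$ are nodes of $\nt$ whose edge to $t$ is rerouted through $X$, and those avoiding $\nt$ are added as new leaves, infinitely many of them hanging at the single node $X$. Consequently your ``bundling'' resolution for (vi) is false: squeezing adds at most one leaf per \emph{finite}-degree node of $T$, but the infinitely many component-leaves at each critical node $X$ are already present in $T$ before squeezing, so the decomposition tree does not arise from $\nt$ by attaching at most one pendant leaf per node. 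Bridging $\nt$ to $T$ requires two rank-invariance facts that your single pendant-leaf fact cannot replace: invariance of the rank under contraction minors with finite branch sets (relating $\nt$ to the subtree $I\nt\subset T$ obtained by contracting each finite star formed by $t$ together with its critical-set children), and invariance under adding arbitrarily many new leaves at nodes of \emph{infinite} degree (relating $I\nt$ to $T$ --- this, not bundling, is what defuses your own worry that one leaf per component would raise the rank; it matters here that each $X$ already has infinite degree in $I\nt$, because infinitely many components of $\cC_{\dc{t}}(X)$ meet $\nt$). Only the final step $T\to T'$ is covered by your ``at most one pendant leaf per node'' fact. The same mischaracterisation makes your justification of (v) wrong as stated, although the conclusion of (v) survives, since contracting the finite stars turns rays of $T$ into rays of $\nt$ (the paper instead derives (v) from the display in (ii)).

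There is a second gap in (ii): your appeal to Lemma~\ref{DisplayLemma} is invalid because the expansion does not have upwards disjoint separators --- the consecutive edges $tX$ and $XC$ carry separations $\alpha(t,X)=\rsep{X}{\cC_{\dc{t}}(X)}$ and $\alpha(X,C)=\rsep{X}{C}$ with the \emph{same} separator $X$, and even nested edges of the $\nt$-part can share vertices of a chain $\dc{t}$. So the end-part of the display must be verified directly, which is what the paper does using Lemma~\ref{NormalTreeNormalRay} together with the hypothesis that every component of $G-\nt$ has finite neighbourhood. Finally, your last step for $(T',\cW)$ is circular in the given framework: by Definition~\ref{def: display extended}, ``displays'' is only defined for \emph{tame} $\Sinf$-trees (critical vertex sets orient only tame separations), so you cannot first transfer the display to $(T',\cW)$ and then declare tameness ``immediate''; one must first prove that squeezing preserves tameness (Lemma~\ref{lem: squeezingPreservesTame}, which itself needs the closure property of tame separations in Lemma~\ref{lem: tame subuniverse}) and only then transfer the display.
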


The proposition has a corollary that is immediate because every normal spanning tree will have an expansion, and expansions will be rooted:

\begin{corollary}
Every normally spanned graph has a rooted tame tree-de\-com\-po\-si\-tion displaying its combinatorial remainder.\qed
\end{corollary}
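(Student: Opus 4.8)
The plan is to establish the six assertions in one pass, proving each first for the plain expansion $(T,\cV)$ directly from the separation behaviour of the normal tree $\nt$, and then checking that it survives the squeezing operation producing $(T',\cW)$. Throughout I would use that the expansion has $\nt$ (up to finite, pendant elaboration) as its decomposition tree and that squeezing only splits parts along finite separators, pushing the bulky parts onto leaves.

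For (i), I would unwind the expansion: for an edge $st$ of $\nt$ with parent $s$, oriented away from the root, the far side is $B=\lceil s\rceil_{\nt}\,\sqcup\,\guc{t}$, the disjoint union of the finite chain $\lceil s\rceil_{\nt}$ and the generalised up-closure $\guc{t}$, joined by the tree-edge $st$. Since $G[\guc{t}]$ is connected by Lemma~\ref{NTseparationAndComponents}(ii) and $G[\lceil s\rceil_{\nt}]$ is a path, $G[B]$ is connected, which is upward connectedness. Assertions (iii) and (iv) I would then read off: every part meets $\nt$ in a down-closure $\lceil t\rceil_{\nt}$, the finite root–$t$ path, so all parts meet $\nt$ finitely; the only parts that can be infinite are those absorbing an infinite component of $G-\nt$, and squeezing is designed precisely so that these land at leaves of $T'$ while every non-leaf part stays a finite chain.

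The crux, and the step I expect to cost the most, is (ii): that both decompositions display $\AbsC{\nt}$ in the extended sense tracking ends and critical vertex sets simultaneously. For the ends I would argue directly, without Lemma~\ref{DisplayLemma} (the plain expansion has nested, not upwards-disjoint, separators): by Lemma~\ref{NormalTreeNormalRay} each end in $\Abs{\nt}$ contains a unique normal ray of $\nt$, this ray is a ray of the decomposition tree and names an end of $T$ to which it corresponds, and the assignment is a bijection onto $\Omega(T)$; conversely an end outside $\Abs{\nt}$ is separated from $\nt$ by a finite set, hence lives in a single component of $G-\nt$ and therefore at a node of $T$. For the critical vertex sets I would invoke Lemma~\ref{NormalTreeNormalCriticalVertexSet}: every critical vertex set in the closure of $\nt$ is a chain $\lceil x\rceil_{\nt}$, and the pigeonhole argument there pins it to the infinite-degree node $x$ of $T$, giving the required bijection with the relevant infinite-degree nodes. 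To transfer display to $(T',\cW)$ I would observe that squeezing produces upwards-disjoint separators while (i) supplies upward connectedness, so the ends now fall under Lemma~\ref{DisplayLemma}, whereas the infinite-degree nodes—hence the critical vertex sets—are carried across unchanged.

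Finally (v) and (vi). For (v), passing from $\nt$ to $T$ and then to $T'$ only attaches pendant leaves and subdivides along finite separators, neither of which creates or destroys a ray, so raylessness is equivalent across all three. For (vi), assuming raylessness, I would prove the three ranks equal by transfinite induction on the rank of $\nt$: a finite down-closed set $X$ witnessing the rank of $\nt$ also witnesses it in $T$ and $T'$, once one checks that attaching the pendant leaves does not raise the rank of any component of $T'-X$ above that of the corresponding component of $\nt-X$. The genuinely delicate point here—and the reason the definition squeezes the way it does—is that a single node of $\nt$ may be the attachment node of infinitely many components; naively adding a separate leaf per component would turn a rank-$0$ node into a rank-$1$ infinite star, so the induction really relies on squeezing having bundled all components at a node into a single pendant leaf, keeping at most one new leaf per node and hence the rank pinned to that of $\nt$.
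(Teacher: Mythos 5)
Your proposal follows the paper's route in spirit---the corollary is, in the paper, an immediate consequence of Proposition~\ref{ExpansionProperties}, and what you sketch is essentially a proof of that proposition---but as a proof of the stated corollary it has a gap: the deduction of the corollary never happens. You never take a normally spanned graph $G$, fix a normal spanning tree $\nt\subset G$, observe that the hypothesis of Proposition~\ref{ExpansionProperties} (every component of $G-\nt$ has finite neighbourhood) holds vacuously because $\nt$ is spanning, and---crucially---observe that $\AbsC{\nt}=\AbsC{V(G)}=\Gamma(G)$, i.e.\ that for a \emph{spanning} normal tree the set displayed by the expansion is the whole combinatorial remainder. Without this identification, ``displays $\AbsC{\nt}$'' is strictly weaker than ``displays the combinatorial remainder'', so the statement is not proved. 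Note also that the corollary needs only the plain expansion together with assertions on rootedness, tameness and display; squeezing and your work on (iv)--(vi) play no role here.

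Beyond that, two steps of your sketch are false as stated, although they concern assertions the corollary does not use. First, squeezing does \emph{not} produce upwards disjoint separators: it leaves every separator of $(T,\cV)$ untouched and only adds pendant edges, and the expansion's separators along a ray of $T$ may all share a vertex. For instance, if $G$ is the ray $v_0v_1v_2\ldots$ together with all chords $v_0v_i$ and $\nt$ is the ray, then every separator is $\{v_0,v_i\}$. No repair is possible: a tree-decomposition with upwards disjoint finite separators can only have \emph{undominated} ends corresponding to ends of its decomposition tree (an infinite fan from a dominating vertex would have to cross a single finite separator infinitely often), whereas the expansion must display the dominated end of this example, which lies in $\Abs{\nt}$. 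So Lemma~\ref{DisplayLemma} cannot be invoked; the paper instead secures tameness of $(T',\cW)$ via Lemma~\ref{lem: squeezingPreservesTame} and transfers the display property directly, since adding leaves changes neither $\Omega(T)$ nor the set of infinite-degree nodes. Second, in (vi) you credit squeezing with ``bundling'' all components at a node into a single leaf. In fact the expansion \emph{does} attach a separate leaf for each component in $\cC_{\dc{t}}(X)$ avoiding $\nt$; the rank survives because these leaves are attached only at the critical-vertex-set nodes $X$, which already have infinite degree (infinitely many members of $\cC_{\dc{t}}(X)$ meet $\nt$), and adding leaves at infinite-degree nodes of an infinite tree preserves its rank. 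Squeezing, by contrast, adds at most one leaf per finite-degree node; it is there to make parts at internal nodes finite, not to protect the rank.
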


Now we prove Theorem~\ref{CombStrongDual} using Proposition~\ref{ExpansionProperties} above:

\begin{proof}[Proof of Theorem~\ref{CombStrongDual}]
(i) and (ii) exclude each other for various reasons we have already discussed.

For the implication $\neg$(i)$\to$(ii) suppose that $G$ contains no comb \at ~$U$.
By Theorem~\ref{CombTreeDualityI} there is a rayless normal tree $\nt\subset G$ that contains $U$ cofinally.
We show that the squeezed expansion $(T',\cW)$ of $\nt$ is as desired.
By Proposition~\ref{ExpansionProperties} every part of $(T',\cW)$ meets $\nt\supset U$ finitely and parts at non-leaves of $T'$ are finite.
As we have $\AbsC{\nt}=\AbsC{U}$ by Lemma~\ref{NTcombinatorialClosureCofinal}, Proposition~\ref{ExpansionProperties} also ensures that the squeezed expansion $(T',\cW)$ of $\nt$ displays $\AbsC{U}$ (in particular, $(T',\cW)$ is tame).
Finally, the $U$-rank of $G$ exists by Theorem~\ref{CombRankDuality} and is equal to the rank of $\nt$ by Lemma~\ref{UrankNormalTree}, which in turn is equal to the rank of $T'$ by Proposition~\ref{ExpansionProperties}.
\end{proof}

Next, we provide all the definitions needed:
First, we define `tame' tree-de\-com\-po\-si\-tions (Definition~\ref{definition:tame}).
Second, we extend the definition of `display' to include critical vertex sets (Definition~\ref{def: display extended}).
Third, we define the `expansion' of a normal tree (Definition~\ref{def: expansion}), which is a certain tree-decomposition. 
Finally we define what it means to `squeeze' a tree-decomposition (Definition~\ref{def:squeezed}).

Recall that the definition of `display', as discussed in Section~\ref{section:preliminaries}, highly relies on the fact that the ends of a graph orient all its finite-order separations. 
Now, critical vertex sets are closely related to ends, as they together with the ends turn graphs into compact topological spaces. 
This is why we may hope that every critical vertex set $X$ orients the finite-order separations so as to lead immediately to a notion of `displaying a collection of critical vertex sets'. 
Probably the most natural way that a critical vertex set $X$ could orient a finite-order separation $\{A,B\}$ towards a side $K\in\{A,B\}$ is that $X$ together with all but finitely many of the components in $\CX$ are contained in $K$. 

However, this is too much to ask: For example consider an infinite star. 
The centre $c$ of the star forms a critical vertex set $X=\{c\}$, and any separation with separator $X$ that has infinitely many leaves on both sides will not be oriented by $X$ in this way. 

But focusing on a suitable class of separations, those that are \emph{tame}, leads to a natural extension of `display' to include critical vertex sets:

\begin{definition}\label{definition:tame}
A finite-order separation $\sep{X}{\cC}$ 
of $G$ is \emph{tame} if for no $Y\subset X$ both $\cC$ and $\cC_X\setminus\cC$ contain infinitely many components whose neighbourhoods are precisely equal to~$Y$.
\end{definition}

\noindent The tame separations of $G$ are precisely those finite-order separations of $G$ that respect the critical vertex sets:

\begin{lemma}\label{tameAndCritical}
A finite-order separation $\{A,B\}$ of a graph $G$ is tame if and only if every critical vertex set $X$ of $G$ together with all but finitely many components from $\CX$ is contained in one side of $\{A,B\}$.
\end{lemma}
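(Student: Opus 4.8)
The plan is to prove the two directions of the biconditional separately, translating the definition of tameness directly into a statement about critical vertex sets. Throughout, write $\{A,B\}=\sep{X}{\cC}$ with $X\subset V(G)$ and $\cC\subset\cC_X$, so that $A=V\setminus V[\cC]$ and $B=X\cup V[\cC]$; recall that every finite-order separation can be written in this form. The reformulation to aim for is that a critical vertex set $Y$ fails to lie essentially on one side precisely when infinitely many components of $\CC{Y}$ fall into $\cC$ and infinitely many fall outside $\cC$.

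For the backward direction I would prove the contrapositive: if $\sep{X}{\cC}$ is \emph{not} tame, then some critical vertex set violates the right-hand condition. By Definition~\ref{definition:tame}, failure of tameness gives a set $Y\subset X$ such that both $\cC$ and $\cC_X\setminus\cC$ contain infinitely many components of $G-X$ whose neighbourhood is exactly $Y$. I would argue that $Y$ is a critical vertex set of $G$: each such component $C$ has $N(C)=Y$ in $G$ (not merely in $G-X$), since $Y\subset X$ and $C$ is already a component of $G-X$ with neighbourhood contained in $X$; there are infinitely many of them, so $\CC{Y}$ is infinite. These components lie in $\CC{Y}$, and infinitely many are inside $V[\cC]\subset B$ while infinitely many are inside $A$ (being components of $G-X$ not in $\cC$, they avoid $V[\cC]$, hence lie in $A\cup X$, and being disjoint from $Y\subset X$ they lie in $A\setminus X$). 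Thus $Y$ has infinitely many components of $\CC{Y}$ strictly on each side, so it cannot have all but finitely many on one side, contradicting the right-hand condition.

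For the forward direction I would again argue contrapositively: suppose some critical vertex set $Y$ of $G$ has infinitely many components of $\CC{Y}$ on each side of $\{A,B\}=\sep{X}{\cC}$, and deduce that $\sep{X}{\cC}$ is not tame. The key point to establish is that $Y\subset X$. Each $C\in\CC{Y}$ is connected and has $N(C)=Y$; since $C$ is connected and avoids nothing forces it onto one side, but infinitely many such $C$ land in $A\setminus B$ and infinitely many in $B\setminus A$, and each $C$ together with its neighbourhood $Y$ is connected. A connected subgraph meeting both $A\setminus B$ and $B\setminus A$ must meet the separator $X=A\cap B$; since the components themselves are disjoint from $Y$ and connected, I would use that the only vertices available to ``cross'' are in $Y$, forcing $Y\cap X$ to be nonempty, and more carefully that all of $Y$ lies in $A\cap B=X$ because a separation cannot split the finite connected set $Y\cup C$ for a crossing component except along $X$. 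Once $Y\subset X$ is secured, the infinitely many components of $\CC{Y}$ on the $B$-side lie in $\cC$ (they meet $V[\cC]$ and avoid $X$) and those on the $A$-side lie in $\cC_X\setminus\cC$, so $Y$ witnesses non-tameness.

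The main obstacle I expect is the clean handling of the crossing argument in the forward direction — precisely, showing that whenever infinitely many components of $\CC{Y}$ straddle the two sides of $\{A,B\}$, the finite set $Y$ must be swallowed into the separator $X$. The subtlety is that individual components $C\in\CC{Y}$ each live on a single side (being connected and disjoint from $X$ once we know $Y\subset X$), so the straddling is a global phenomenon across the family, not within a single $C$; the correct formalization is that since some components of $\CC{Y}$ lie in $A\setminus B$ and others in $B\setminus A$, and all share the neighbourhood $Y$, the set $Y$ together with one component from each side forms a connected subgraph meeting both sides, hence meeting $X$, and a short argument using $N(C)=Y$ pins all of $Y$ into $X$. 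I would isolate this as the crux and verify it with care, after which both inclusions of the biconditional follow by bookkeeping.
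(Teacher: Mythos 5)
Your backward direction is sound: non-tameness hands you a set $Y\subseteq X$ whose witnessing components are simultaneously members of $\CC{Y}$ (being components of $G-X$ with neighbourhood exactly $Y$, they are also components of $G-Y$), so $Y$ is critical, and these components land infinitely often in $A\setminus B$ and infinitely often in $B\setminus A$. Your crux in the forward direction is also correct as far as it goes: given one component of $\CC{Y}$ strictly inside each side, every vertex of $Y$ has a neighbour in both, and since no edge of $G$ joins $A\setminus B$ to $B\setminus A$, this forces $Y\subseteq A\cap B=X$; the components strictly in $B\setminus A$ are then members of $\cC$ with neighbourhood exactly $Y$ and those strictly in $A\setminus B$ are members of $\cC_X\setminus\cC$, witnessing non-tameness.

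The genuine gap is in how you set up the forward contrapositive. The negation of ``$Y$ together with all but finitely many components from $\CC{Y}$ is contained in one side'' is \emph{not} ``infinitely many components of $\CC{Y}$ lie on each side''; it also contains the cases where $Y$ itself is the obstruction --- e.g.\ $Y$ meets both $A\setminus B$ and $B\setminus A$, or $Y$ meets $A\setminus B$ while the components accumulate only in $B\setminus A$. Your argument says nothing in those cases, so what you actually prove is only: tame $\Rightarrow$ no critical set has infinitely many of its components strictly on each side. To obtain the lemma you must additionally show that the side containing cofinitely many of the components also contains $Y$, and that $Y$ cannot straddle both strict sides. This is exactly what the paper's one-line observation supplies: any two vertices of a critical vertex set $Y$ are joined by infinitely many independent paths through the members of $\CC{Y}$, so the finite separator $A\cap B$ cannot separate them, i.e.\ $Y$ meets at most one component of $G-(A\cap B)$. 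In your language the fix is short: since only finitely many of the infinitely many disjoint members of $\CC{Y}$ can meet the finite set $A\cap B$, some $C_0\in\CC{Y}$ lies strictly in one side, say $C_0\subseteq A\setminus B$; then every vertex of $Y$ has a neighbour in $C_0$, hence lies in $A$ (no crossing edges), which simultaneously rules out the straddling cases and places $Y$ in the same side as its components. Without this step the forward implication is incomplete, even though each step you did write down is valid.
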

\begin{proof}
For the forward implication, note that every distinct two vertices of a critical vertex set are linked in $G[X\cup\bigcup\CX]$ by infinitely many independent paths, so every critical vertex set of $G$ meets at most one component of $G-(A\cap B)$.
\end{proof}

We say that an $\Sinf$-tree $(T,\alpha)$ is \emph{tame} if all the separations in the image of $\alpha$ are tame.
And we say that a tree-decomposition is \emph{tame} if it corresponds to a tame $\Sinf$-tree.

If $X$ is a critical vertex set of $G$ and $(T,\alpha)$ is a tame $\Sinf$-tree, then $X$ induces a consistent orientation of the image of $\alpha$ by orienting every tame finite-order separation $\{A,B\}$ towards the side that contains $X$ and all but finitely many of the components from $\CX$ (cf.~Lemma~\ref{tameAndCritical} above).
This consistent orientation also induces a consistent orientation of $\vE(T)$ via $\alpha$.
Then, just like for ends, the critical vertex set $X$ either \emph{lives} at a unique node $t\in T$ or \emph{corresponds} to a unique end of $T$.
In this way, we obtain an extension $\sigma\colon\Gamma(G)\to\Omega(T)\sqcup V(T)$ of the map $\tau\colon\Omega(G)\to\Omega(T)\sqcup V(T)$ from Section~\ref{subsec:TDCsDisplayDef}.

Since $\sigma$ extends $\tau$ from the end space $\Omega(G)$ of $G$ to the full combinatorial remainder $\Gamma(G)$ of $G$, it is reasonable to wonder why the target set of $\sigma$ is that of~$\tau$, namely $\Omega(T)\sqcup V(T)$, rather than analogously taking the target set $\Gamma(T)\sqcup V(T)$.
At a closer look, the critical vertex sets of $T$ are already contained in the target set $\Omega(T)\sqcup V(T)$, for they are precisely the infinite degree nodes of $T$.
This, and the fact that every critical vertex set $X$ of $G$ naturally comes with an oriented tame separation $\crsep{X}$ of $G$, motivate the following definition.

\begin{definition}\label{def: display extended}[Display $\Psi\subset\Gamma(G)$]
Let $G$ be any graph.
A rooted tame $\Sinf$-tree $(T,\alpha)$ \emph{displays} a subset $\Psi$ of the combinatorial remainder $\Gamma(G)=\Omega(G)\sqcup\crit(G)$ of $G$ if $\sigma$ satisfies the following three conditions:
\begin{itemize}\itemsep.2em
    \item $\sigma$ restricts to a bijection between $\Psi\cap\Omega(G)$ and $\Omega(T)$;
    \item $\sigma$ restricts to a bijection between $\Psi\cap\crit(G)$ and the infinite-degree nodes of $T$ so that:
    whenever $\sigma$ sends a critical vertex set $X\in \Psi$ to $t\in T$, then $t$ has a predecessor $s\in T$ with $\alpha(s,t)=\rsep{X}{\cC}$ such that $\cC\subset\CX$ is cofinite and $\alpha$ restricts to a bijection between $\vF_{\! t}$ and the star in $\vSinf$ that consists of the separation $\rsep{X}{\cC}$ and all the separations $\lsep{C}{X}$ with $C\in\cC$; 
    \item $\sigma$ sends all the elements of $\Gamma(G)\setminus\Psi$ to finite-degree nodes of $T$.
\end{itemize}
\end{definition}

Note that this definition of displays is not exactly an extension of the original definition given in Section~\ref{subsec:TDCsDisplayDef}.
Indeed, if $(T,\alpha)$ displays $\Psi$ and $\omega\notin \Psi$ is an end, then with the original definition $\omega$ may correspond to an infinite degree vertex of~$T$, but not with the new definition.
However, the new definition is stronger than the original one: if $(T,\alpha)$ displays $\Psi\subset\Gamma(G)$ in the new sense, then $(T,\alpha)$ displays $\Psi\cap\Omega(G)$ in the original sense.

We solve this ambiguity as follows.
Whenever we say that a tree-decomposition or $\Sinf$-tree displays some set $\Psi$ of ends of $G$ and it is clearly understood that we view $\Psi$ as a subset of $\Omega(G)$, e.g.\ when we let $\Psi$ consist of the undominated ends of $G$ or consider $\Psi=\Abs{U}$, then by `displays' we refer to the original definition from Section~\ref{subsec:TDCsDisplayDef}.
But whenever we explicitly introduce $\Psi$ as a subset of the combinatorial remainder $\Gamma(G)$ of $G$, e.g.\ when we let $\Psi$ consist of critical vertex sets or consider $\Psi=\AbsC{U}$, then by `displays' we refer to the new definition introduced above.

We wish to make a few remarks on our new definition.
If $(T,\alpha)$ is a rooted tame $\Sinf$-tree displaying some $\Psi\subset\Gamma(G)$ and the tree-decomposition $(T,\cV)$ corresponding to $(T,\alpha)$ exists, then $V_{\sigma(X)}=X$ whenever $X$ is a critical vertex set in $\Psi$.
We do not require $\cC=\CX$ in the definition of displays because there are simply structured normally spanned graphs for which otherwise none of their tree-decompositions would display their combinatorial remainder.
See \cite[Examples~3.6~\&~3.7]{DistinguishUltrafilterTangles} for details.

Now, let us turn to the expansion of a normal tree.
Given vertex sets $Y\subset X\subset V(G)$ we write $\cC_X(Y)$ for the collection of all components $C\in\cC_X$ with $N(C)=Y$.

\begin{definition}[Expansion of a normal tree]\label{def: expansion}
In order to define the expansion, suppose that $G$ is any connected graph and $\nt\subset G$ is any normal tree such that every component of $G-\nt$ has finite neighbourhood.
From the normal tree $\nt$ we obtain the tame \emph{expansion} $(T,\cV)$ of $\nt$ in $G$ in two steps, as follows.

For the first step, let us suppose without loss of generality that for all nodes $t\in \nt$ every up-neighbour $t'$ of $t$ in $\nt$ is named as the component $\guc{t'}$ of $G-\dc{t}$ containing~$t'$.
(Formally, we realise this as labelling of the vertex set to avoid a conflict with the axiom of foundation.)
We define a map $\beta\colon\vE(\nt)\to\vSinf$ by letting $\beta(t,C):=\rsep{N(C)}{C}$ and $\beta(C,t):=\beta(t,C)^\ast$ whenever $C$ is an up-neighbour of a node $t$ in $\nt$.
Then $(\nt,\beta)$ is a rooted tame $\Sinf$-tree that displays $\Abs{\nt}\subset\Omega(G)$.

In the second step, we obtain from $(\nt,\beta)$ a rooted tame $\Sinf$-tree $(T,\alpha)$ displaying $\AbsC{\nt}\subset\Gamma(G)$. 
Informally speaking we sort the separations of the form $\beta(t,C)$ with $t\in \nt$ an infinite degree-node and $C$ an up-neighbour of $t$ in $\nt$ by the critical vertex sets $X\subseteq \dc{t}$ in the closure of $\nt$ with $C\in \CX$. Formally this is done as follows (cf.~Figure~\ref{fig:Expansion}).

\begin{figure}[ht]
\centering
\def\svgwidth{\columnwidth}
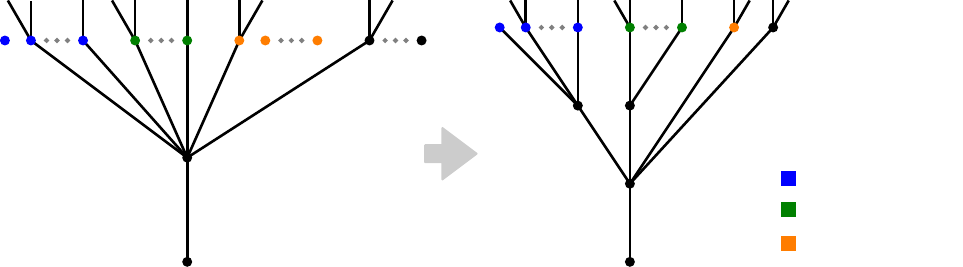
\caption{The second step in the construction of the expansion of normal trees. The critical vertex sets $X$ and $X'$ are in the closure of $\nt$, while $X''$ is not.
The three sets $X$, $X'$ and $X''$ are all the critical vertex sets of $G$ that contain $t$ and are contained in~$\dc{t}$.}
\label{fig:Expansion}
\end{figure} 

For every infinite-degree node $t\in \nt$ and every critical vertex set $X\in\AbsC{\nt}$ satisfying $t\in X\subset\dc{t}$ we do the following: 
\begin{enumerate}
    \item we add a new vertex named $X$ to $\nt$ and join it to $t$;
    \item for every component $C\in\cC_{\dc{t}}(X)\subset\CX$ we delete the edge $tC$ (this is redundant when $\nt$ avoids $C$) and add the new edge $XC$ (note that in particular the vertex $C$ gets added as well, even if $\nt$ avoids $C$);
    \item we let $\alpha(t,X):=\rsep{X}{\cC_{\dc{t}}(X)}$, and for every component $C\in \cC_{\dc{t}}(X)$ we let $\alpha(X,C):=\rsep{X}{C}$.
\end{enumerate}
Then we take $T$ to be the resulting tree, and we extend $\alpha$ to all of $\vE(T)$ by letting $\alpha(\ve):=\beta(\ve)$ whenever the edge $e$ of $T$ is also an edge of the normal tree $\nt$.
The rooted tame tree-decomposition $(T,\cV)$ corresponding to $(T,\alpha)$ is the \emph{expansion} of $\nt$ in $G$.\xqed{$\diamondsuit$}
\end{definition}

And here is the definition of squeezing:

\begin{definition}[Squeezing a tree-decomposition]\label{def:squeezed}
Suppose that $(T,\cV)$ and $(T',\cW)$ are tree-decompositions of $G$.
We say that $(T',\cW)$ is a \emph{squeezed} $(T,\cV)$ if $(T',\cW)$ is obtained from $(T,\cV)$ as follows.
The tree $T'$ is obtained from $T$ by adding, for every node $t\in T$ that has finite degree $>1$ and whose part $V_t$ is infinite, a new node $t'$ to $T$ and joining it to $t$.
For all these nodes $t$ the part $W_t$ is the union of the separators of $(T,\cV)$ associated with the edges of $T$ at $t$, and the part $W_{t'}$ is taken to be the part $V_t$.
For all other nodes $t$ the part $W_t$ is $V_t$.
\end{definition}

Note that if $(T',\cW)$ is the squeezed $(T,\cV)$ and all separators of $(T,\cV)$ are finite, then all the infinite parts $V_t$ with $t$ an internal finite-degree node of $T$ become finite parts $W_t$.
Thus, all parts $W_t$ with $t$ an internal finite-degree node of $T'$ are finite.
Achieving this property is the purpose of squeezing.

Squeezing preserves tameness:

\begin{lemma}\label{lem: squeezingPreservesTame}
Let $G$ be any graph, let $(T,\cV)$ be any tree-de\-com\-po\-si\-tion of $G$ with finite separators and let $(T',\cW)$ be the squeezed $(T,\cV)$.
If $(T,\cV)$ is tame, then $(T',\cW)$ is tame as well.
\end{lemma}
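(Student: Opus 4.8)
The plan is to verify that every separation in the image of the $\Sinf$-tree corresponding to $(T',\cW)$ is tame, and then invoke the definition of a tame tree-decomposition. Since $T'$ arises from $T$ by attaching, to each node $t$ of finite degree $>1$ with infinite part, a single leaf $t'$, the edges of $T'$ fall into two classes: the \emph{old} edges, which are the edges of $T$, and the \emph{new} edges $tt'$. I would treat these separately, using Lemma~\ref{tameAndCritical} as the working criterion throughout: a finite-order separation $\{A,B\}$ is tame precisely when every critical vertex set $Z$ together with all but finitely many components of $\CC{Z}$ lies in a single side.

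For the \emph{old} edges I would first check that an old edge $e$ of $T$ induces in $(T',\cW)$ exactly the same separation as in $(T,\cV)$, so that tameness is inherited directly from the hypothesis. The only change caused by squeezing is that a squeezed node $s$ has its part shrunk from $V_s$ to the finite set $W_s$, while the attached leaf $s'$ carries $W_{s'}=V_s$ and lies on the same side of $e$ as $s$. Since $W_s\subseteq V_s$, summing the new parts over either component of $T'-e$ yields $W_s\cup W_{s'}=V_s$ at squeezed nodes and $W_s=V_s$ elsewhere; hence each side equals $\bigcup_r V_r$ over the corresponding component of $T-e$, exactly as before. So old edges contribute only separations of $(T,\cV)$, which are tame by assumption.

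For the \emph{new} edges $tt'$ I would first compute the separation. Removing $tt'$ isolates $t'$ (with part $V_t$) from the rest, so the separation is $\{V_t,\ \bigcup_{r\neq t}V_r\}$ with separator $W_t\cap V_t=W_t$, finite because $t$ has finite degree and all separators of $(T,\cV)$ are finite. Let $s_1,\dots,s_k$ be the finitely many neighbours of $t$ in $T$, and let $\{A_i,B_i\}$ be the separation induced by $ts_i$ in $(T,\cV)$, oriented so $A_i$ is the $t$-side. Then $\bigcup_{r\neq t}V_r=\bigcup_i B_i$ and $V_t=\bigcap_i A_i$, so the new separation is the meet of the finitely many tame separations $\{A_i,B_i\}$. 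Now take any critical vertex set $Z$. By Lemma~\ref{tameAndCritical} applied to each $\{A_i,B_i\}$, the set $Z$ together with all but finitely many members of $\CC{Z}$ lies on one side of $\{A_i,B_i\}$. If for some $i$ that side is $B_i$, then $Z$ and cofinitely many components of $\CC{Z}$ lie in $B_i\subseteq\bigcup_j B_j$, the big side of the new separation. Otherwise the chosen side is $A_i$ for every $i$; then $Z\subseteq\bigcap_i A_i=V_t$, and since each index contributes only finitely many exceptional components and there are finitely many indices, all but finitely many components of $\CC{Z}$ lie in $\bigcap_i A_i=V_t$ as well. Either way $Z$ and cofinitely many of its components lie in a single side, so the new separation is tame by Lemma~\ref{tameAndCritical}.

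Having shown both types of separation tame, I would conclude that $(T',\cW)$ is tame. I expect the only genuinely delicate point to be the new edges: one must notice that the new separation is exactly the meet of the separations at $t$ and then control the finitely many exceptional components across the finitely many neighbours of $t$. This is precisely where the hypothesis that $t$ has finite degree enters — both to keep $W_t$ finite and to keep the union of exceptional-component sets finite. The old-edge part is routine bookkeeping with the definition of squeezing.
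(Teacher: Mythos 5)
Your proof is correct and follows the paper's argument in all essentials: old edges induce the same (tame) separations as in $(T,\cV)$, and the separation at a new edge $tt'$ is the meet of the finitely many tame separations at $t$, whose tameness you verify via Lemma~\ref{tameAndCritical}. The only cosmetic difference is that the paper factors this last step out as a separate statement (Lemma~\ref{lem: tame subuniverse}, proved by induction from the case of two separations), whereas you prove the $k$-fold case inline by intersecting the finitely many cofinite exceptional sets directly.
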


\noindent To prove this lemma, we need the following fact about tame separations.
(In~the language of separation systems, it states that the tame separations form a subuniverse of the universe of finite-order separations of a graph.)

\begin{lemma}\label{lem: tame subuniverse}
If $(A_0,B_0),\ldots,(A_n,B_n)$ are tame separations of a graph $G$ for some $n\in\N$, then the separation $(\bigcup_{i\le n}A_i,\bigcap_{i\le n}B_i)$ is tame as well.
\end{lemma}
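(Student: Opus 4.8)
The plan is to prove tameness of the join through its characterisation in terms of critical vertex sets, Lemma~\ref{tameAndCritical}, which converts the combinatorial condition from Definition~\ref{definition:tame} into the cleaner geometric statement that every critical vertex set, together with all but finitely many of its components, lies on a single side of the separation. Write $A := \bigcup_{i\le n}A_i$ and $B := \bigcap_{i\le n}B_i$. First I would check that $\{A,B\}$ is a finite-order separation of $G$: that it is a separation is the standard fact that the separations of a graph form a lattice in which $(A,B)$ is the supremum of the $(A_i,B_i)$ under the usual ordering, and its order is finite because its separator satisfies $A\cap B\subseteq\bigcup_{i\le n}(A_i\cap B_i)$, a finite union of the finite separators of the $(A_i,B_i)$.

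The heart of the argument is then a short case distinction applied to an arbitrary critical vertex set $X$ of $G$. Since each $\{A_i,B_i\}$ is tame, Lemma~\ref{tameAndCritical} supplies for every $i$ a side $K_i\in\{A_i,B_i\}$ together with a cofinite subcollection $\CX^{(i)}\subseteq\CX$ satisfying $X\cup V[\CX^{(i)}]\subseteq K_i$. If $K_i=A_i$ for some single $i$, then already $X\cup V[\CX^{(i)}]\subseteq A_i\subseteq A$, so $X$ together with the cofinitely many components in $\CX^{(i)}$ lies on the $A$-side of $\{A,B\}$. Otherwise $K_i=B_i$ for all $i$; then $\CX':=\bigcap_{i\le n}\CX^{(i)}$ is again cofinite in $\CX$, being a finite intersection of cofinite subcollections, and for every $C\in\CX'$ we get $V(C)\subseteq\bigcap_{i\le n}B_i=B$ as well as $X\subseteq B$, so that $X\cup V[\CX']\subseteq B$. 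In either case $X$ and all but finitely many of its components lie on one side of $\{A,B\}$, and since $X$ was arbitrary, Lemma~\ref{tameAndCritical} yields that $\{A,B\}$ is tame.

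I expect the genuine content, and the only point where one must be careful, to be the built-in asymmetry between the two sides of the join: the small side $A$ is a union, so a single index with $K_i=A_i$ already forces $X$ onto the $A$-side, whereas the big side $B$ is an intersection, so confining $X$ to the $B$-side requires all indices to cooperate at once — which is exactly what passing to the finite intersection $\CX'$ of the cofinitely-many-component collections delivers. Everything else is routine; because the statement concerns a finite join it needs no transfinite or limiting argument. One alternative would be to treat $n=1$ and induct, but handling the union and intersection directly is cleaner and avoids re-verifying the lattice identity at each step; closure of the tame separations under involution is immediate from the symmetry of Definition~\ref{definition:tame}, and the dual (meet) statement follows by the same reasoning, which is what justifies the bracketed remark that the tame separations form a subuniverse.
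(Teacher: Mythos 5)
Your proof is correct and follows essentially the same route as the paper: both rely on Lemma~\ref{tameAndCritical} in each direction, make the same case distinction (one index suffices to land on the union side; all indices must cooperate, via a finite intersection of cofinite subcollections of $\CX$, to land on the intersection side). The only difference is organisational — the paper reduces to $n=1$ and inducts, while you handle general $n$ directly — which changes nothing of substance.
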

\begin{proof}
The assertion follows by induction on~$n$ once we have shown the case~\mbox{$n=1$}.
By Lemma~\ref{tameAndCritical} it suffices to find for every critical vertex set $X$ of $G$ a cofinite subset $\cC\subset\CX$ such that $X\cup V[\cC]\subset A_0\cup A_1$ or $X\cup V[\cC]\subset B_0\cap B_1$.
Given~$X$, let $\cC_0$ and $\cC_1$ be two cofinite subsets of~$\CX$ such that $X\cup V[\cC_0]$ is included in either $A_0$ or $B_0$, and $X\cup V[\cC_1]$ is included in either $A_1$ or $B_1$.
We are done with $\cC=\cC_0$ or $\cC=\cC_1$ except possibly when $X\cup V[\cC_i]\subset B_i$ for both $i=0,1$.
In this case, $\cC:=\cC_0\cap\cC_1$ is a cofinite subset of~$\CX$ with $X\cup V[\cC]\subset B_0\cap B_1$.
\end{proof}

\begin{proof}[Proof of Lemma~\ref{lem: squeezingPreservesTame}]
Suppose that $(T,\cV)$ is a tame tree-decomposition of $G$ and that $(T',\cW)$ is the squeezed $(T,\cV)$.
Separations of $G$ that are induced by $(T',\cW)$ are tame when they are induced by edges of $T'$ that are also edges of $T\subset T'$.
Hence it suffices to show that for every leaf $\ell\in T'-T$ with neighbour $t\in T\subset T'$ the separation induced by $\ell t\in T'$ is tame.
For this, let any edge $\ell t\in T'$ be given and write $s_0,\ldots, s_n$ for the finitely many neighbours of $t$ in $T$.
Let $(T',\alpha')$ be the $\Sinf$-tree corresponding to $(T',\cW)$, let $(A,B):=\alpha'(t,\ell)$ and define $(A_i,B_i):=\alpha'(s_i,t)$ for all $i\le n$.
Then, by the definition of $(T',\cW)$, we have $A=\bigcup_i A_i$ and $B=\bigcap_i B_i$.
Since all separations $(A_i,B_i)$ are tame, it follows by Lemma~\ref{lem: tame subuniverse} that $(A,B)$ is tame as well.
%Our aim is to show that the separation $\{A,B\}$ is tame.
%By Lemma~\ref{tameAndCritical} it suffices to show that for every critical vertex set $X$ of $G$ there is a cofinite subset $\cC\subset\CX$ such that either $G[X\cup\bigcup\cC]\subset G[A]$ or $G[X\cup\bigcup\cC]\subset G[B]$.
%For this, let any critical vertex set $X$ of $G$ be given.
%
%The critical vertex set $X$ lives at or corresponds to the unique node or end $\sigma(X)$ of $T$ with regard to $(T,\cV)$ because $(T,\cV)$ is tame.
%If $\sigma(X)$ is distinct from $t$, then there is a cofinite subset $\cC\subset\CX$ such that $G[X\cup\bigcup\cC]\subset G[B_i]$ for some $i\le n$, and $G[X\cup\bigcup\cC]\subset G[B]$ follows as desired.
%Hence we may assume that $\sigma(X)=t$.
%Thus, for every $i\le n$ there is a cofinite subset $\cC(i)\subset\CX$ such that $G[X\cup\bigcup\cC(i)]\subset G[A_i]$.
%Then $G[X\cup\bigcup\cC]\subset G[A]$ as desired for the cofinite subset $\cC:=\bigcap_i\cC(i)\subset\CX$.
\end{proof}

Now that we have formally introduced all the definitions involved, we are ready to prove Proposition~\ref{ExpansionProperties}:

\begin{proof}[{Proof of Proposition~\ref{ExpansionProperties}}]
(i) The expansion is upwards connected by definition.

(ii) Using Lemma~\ref{NormalTreeNormalRay} and the fact that every component of $G-\nt$ has finite neighbourhood, it is straightforward to check that $(T,\cV)$ displays $\Abs{\nt}\subset\Omega(G)$.
We verify that $(T,\cV)$ even displays $\AbsC{\nt}\subset\Gamma(G)$.
On the one hand, by Lemma~\ref{NormalTreeNormalCriticalVertexSet} every critical vertex set $X\in\AbsC{\nt}$ is contained in $\nt$ as a chain, and hence appears precisely once as a node of $T$ by the definition of the expansion.
On the other hand, every node of infinite degree of $T$ stems from such a critical vertex set.
Together we obtain that $(T,\cV)$ displays $\AbsC{\nt}$.
The tree-decomposition $(T',\cW)$ is tame because $(T,\cV)$ is, cf.~Lemma~\ref{lem: squeezingPreservesTame}.
From here, it is straightforward to show that $(T',\cW)$ displays $\AbsC{\nt}$ as well.

(iii) and (iv) are straightforward.

(v) follows from (ii) and Lemma~\ref{NormalTreeNormalRay}.

(vi) It is straightforward to check by induction on the rank that the rank is preserved under taking contraction minors with finite branch sets. Similarly, one can show that two infinite trees have the same rank if one is  obtained from the other by adding new leaves to some of its nodes of infinite degree.
Now we deduce (vi) as follows.
For every node $t\in \nt$ let us write $S_t$ for the finite star with centre $t$ and leaves the critical vertex sets $X\in\AbsC{\nt}$ with $t\in X\subset\dc{t}$.
The decomposition tree $T$ of the expansion of $\nt$ is obtained from an $I\nt\subset T$ with finite branch sets (the non-trivial branch sets are precisely the vertex sets of the stars $S_t$ for the nodes $t\in\nt$ of infinite degree) by adding leaves to  nodes of infinite degree (each leaf is a component $C\in\cC_{\dc{t}}(X)$ avoiding $\nt$ for some $X\in S_t$ and gets joined to $X\in I\nt\subset T$).
Therefore, the ranks of $T$ and $\nt$ coincide.
The decomposition tree $T'$ is obtained from $T$ by adding at most one new leaf to each node of $T$, and new leaves are only added to finite-degree nodes of $T$.
An induction on the rank shows that the rank is preserved under this operation, and so the ranks of $T'$ and $T$ coincide as well.
\end{proof}

Carmesin~\cite{carmesin2014all} showed that every connected graph $G$ has a tree-decomposition with  finite separators that displays $\Psi$ for $\Psi$ the set of undominated ends of $G$, cf.~Theorem~\ref{CarmesinsTDCforTopologicalEnds}.
He then asked for a characterisation of those pairs of a graph $G$ and a subset $\Psi\subset\Omega(G)$ for which $G$ has such a tree-decomposition displaying $\Psi$.
In the same spirit, our findings motivate the following problem:

\begin{problem}
Characterise, for all connected graphs $G$, the subsets $\Psi\subset\Gamma(G)$ for which $G$ admits a rooted tame tree-decomposition displaying $\Psi$.
\end{problem}

% +++ The star
\section{Stars}\label{section:StarRay}

\subsection{Normal trees}\label{subsection:StarNT}

In this section we prove a duality theorem for stars in terms of normal trees.

\begin{mainresult}\label{InitialStarNTduality}
\TFAD
\begin{enumerate}
    \item $G$ contains a star \at $U$;
    \item there is a locally finite normal tree $T\subset G$ that contains $U$  and all whose rays are undominated in $G$.
\end{enumerate}
Moreover, the normal tree $T$ in \emph{(ii)} can be chosen such that it contains $U$ cofinally and every component of $G-T$ has finite neighbourhood.
\end{mainresult}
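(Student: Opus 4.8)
\textbf{Proof proposal for Theorem~\ref{InitialStarNTduality}.}

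The plan is to prove the two implications separately, establishing that (i) and (ii) exclude one another and that at least one of them always holds. For the direction (ii)$\,\to\neg$(i), I would argue that a locally finite normal tree $T\subset G$ containing $U$, all of whose rays are undominated in $G$, cannot coexist with a star \at $U$. Suppose for contradiction that $G$ contains a star \at $U$ with centre $z$ and leaves in $U\subset V(T)$. Since $T$ is locally finite, no vertex of $T$ has infinite degree \emph{in} $T$, so the infinitely many leaves of the star cannot all be low in $T$; using that the leaves lie in $U\subset V(T)$ and that $T$ is normal, I would apply Lemma~\ref{sclForLocFinTrees} to the down-closure of the leaf set to extract a comb, and then argue that the spine of this comb yields an end of $T$ whose corresponding end of $G$ is dominated by $z$ (the centre of the star sends infinitely many paths to the leaves, hence to a tail of the spine). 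This contradicts the assumption that all rays of $T$ are undominated in $G$. The delicate point here is confirming that the star's centre genuinely dominates the relevant normal ray rather than merely meeting the tree, so I would lean on Lemma~\ref{NormalTreeNormalRay} to pin down the unique normal ray in the end and on the definition of domination via an infinite fan.

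For the substantial direction $\neg$(i)$\,\to$(ii), I expect to build the normal tree by a careful exhaustion/recursion in the spirit of Jung's construction (Proposition~\ref{JungsRaylessSequence}), but now producing a \emph{locally finite} tree rather than a rayless one, and controlling domination of rays. The absence of a star \at $U$ means, by the star-comb lemma (Lemma~\ref{StarCombLemma}), that $G$ must contain a comb \at every infinite subset of $U$; dually, no vertex can be the centre of a star \at $U$, which should translate into a local-finiteness budget that can be respected during the construction. The guiding idea is to construct an ascending sequence of finite (or locally finite) normal trees $T_0\subset T_1\subset\cdots$, at each stage attaching finitely many new branches so as to capture more of $U$ cofinally while never creating a vertex of infinite degree, and to ensure at the limit that every normal ray of $T:=\bigcup_n T_n$ is undominated in $G$. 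Local finiteness of the limit must be guaranteed by only ever adding finitely many up-neighbours to any fixed node across the whole construction.

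\textbf{Main obstacle.} The hard part will be the simultaneous control of two competing requirements in the $\neg$(i)$\to$(ii) direction: keeping $T$ locally finite while also forcing every ray of $T$ to be undominated in $G$. Local finiteness alone follows from the absence of stars \at $U$ (a vertex of infinite degree in the normal tree, together with $U$ being cofinal, would produce a star \at $U$ via Lemma~\ref{sclForRaylessTrees}-type reasoning); but the undominatedness of \emph{all} rays is a genuinely global condition that a naive maximal-tree argument does not deliver. I anticipate that the clean way to secure it is to invoke Carmesin's tree-decomposition displaying the undominated ends (Theorem~\ref{CarmesinsTDCforTopologicalEnds}): one first passes to the subgraph or structure in which only undominated ends survive, and then normalises. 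Concretely, I would use Theorem~\ref{CarmesinsTDCforTopologicalEnds} to obtain a rooted tree-decomposition with upwards disjoint finite connected separators displaying the undominated ends of $G$, and then convert its decomposition tree into a normal tree of $G$ containing $U$ cofinally; the displaying property is exactly what forces the resulting normal rays to lie in undominated ends. Verifying that this conversion can be carried out so that the tree is simultaneously normal, locally finite, contains $U$ cofinally, and has all components of $G-T$ with finite neighbourhood (the `moreover' clause) is where the real work lies, and I would expect to need a distance argument combined with Lemma~\ref{NTseparationAndComponents} and Lemma~\ref{NTcombinatorialClosureCofinal} to tie the separation structure of $T$ back to the displayed ends.
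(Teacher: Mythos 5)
Your first direction, (ii)$\to\neg$(i), is sound and is essentially the paper's own argument: extract a comb attached to the star's leaf set via Lemma~\ref{sclForLocFinTrees} and observe that the star's centre dominates the comb's spine, contradicting (ii). The gap lies in the direction $\neg$(i)$\to$(ii). First, you never establish that $U$ is countable. This is not optional: a connected locally finite tree is countable, so (ii) cannot hold for uncountable $U$, and any proof must rule that case out. The paper does this in one line: if $U$ were uncountable, the cardinality-sensitive version of the star--comb lemma (the Diestel--K\"uhn/Gollin--Heuer refinements cited after Lemma~\ref{StarCombLemma}) would force a star attached to $U$, since a comb has only countably many teeth. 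Your staged exhaustion (finitely many new branches per step) can only ever produce a countable tree, so it silently presupposes exactly the fact you never prove; and without countability, Jung's machinery (Proposition~\ref{JungsRaylessSequence}, Corollary~\ref{JungCountableSet}) is not applicable at all, by Theorem~\ref{JungTheorem}.

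Second, the step you yourself flag as ``where the real work lies''---converting Carmesin's tree-decomposition (Theorem~\ref{CarmesinsTDCforTopologicalEnds}) into a locally finite normal tree of $G$ containing $U$ cofinally---is left entirely unproved, and it is not a routine verification: a decomposition tree is not a subgraph of $G$; Carmesin's tree displays \emph{all} undominated ends, not just those in $\Abs{U}$; it need not be locally finite; and its parts may contain infinitely many vertices of $U$. In fact this detour is unnecessary, because the obstacle you are trying to circumvent is not there. The paper's proof runs: $U$ is countable; by Corollary~\ref{JungCountableSet} there is a normal tree $T\subset G$ containing $U$ \emph{cofinally}; and then both local finiteness of $T$ and undominatedness of all its rays follow directly from $\neg$(i). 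You saw this for local finiteness, but the identical mechanism handles domination: if some vertex $v$ dominated a ray $R\subset T$, then, since $U$ is cofinal in the tree-order of $T$, the ray $R$ extends inside $T$ to a comb attached to $U$, and combining the infinite $v$--$R$ fan with that comb yields a star attached to $U$ with centre $v$, contradicting $\neg$(i). So undominatedness is no more ``global'' than local finiteness once cofinality is in place---which is precisely why the paper insists on Jung's trees containing $U$ cofinally. (The `moreover' part is a separate matter; the paper itself defers its proof to part II of the series, so your proposal need not be faulted for that.)
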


\begin{proof}[{Proof of Theorem~\ref{InitialStarNTduality} without the `moreover' part}]
First, we show that at most one of (i) and (ii) holds.
Assume for a contradiction that both hold.
Let $T\subset G$ be a normal tree as in (ii) and let $U'\subset U$ form the attachment set of some star \at $U$.
By Lemma~\ref{sclForLocFinTrees} the locally finite tree $T$ contains a comb \at $U'$.
That comb's spine, then, is dominated in $G$ by the centre of the star, a contradiction.

It remains to show that at least one of (i) and (ii) holds; we show $\neg$(i)$\to$(ii).
We have that $U$ is countable, since otherwise the star-comb lemma yields a star \at $U$.
By Corollary~\ref{JungCountableSet} we find a normal tree $T\subset G$ that contains $U$ cofinally.
Clearly, $T$ must be locally finite since $G$ contains no star \at $U$.
For the same reason, every ray of $T$ is undominated in $G$. 
\end{proof}

The remaining `moreover' part is a consequence of \cite[Theorem~1]{StarComb2TheDominatedComb}, which is why its proof is placed in the second paper of our series, cf.~\cite[Section~2]{StarComb2TheDominatedComb}.
To see immediately that a locally finite normal tree $T$ as in (ii) is more specific than a comb when $U$ is infinite, apply Lemma~\ref{sclForLocFinTrees} to $T$.

% +++ Star-tree duality +++

\subsection{Tree-decompositions}\label{subsection:StarTree}

For combs we have provided a duality theorem in terms of normal trees, and that theorem then gave rise to another duality theorem in terms of tree-decompositions.
Since we have shown a duality theorem for stars in terms of normal trees in the previous section, a natural question to ask is whether this theorem gives rise to a duality theorem for stars in terms of tree-decompositions, just like for combs.
It turns out that stars have a duality theorem in terms of tree-decompositions.
But this theorem cannot be proved by imitating the proof of the respective theorem for combs, and so we will have to come up with a whole new strategy.
Our theorem reads as follows:

\begin{mainresult}\label{StarRayDualtiy} 
\TFAD
\begin{enumerate}
\item $G$ contains a star \at $U$;
\item $G$ has a locally finite tree-decomposition with finite and pairwise disjoint separators such that each part contains at most finitely many vertices of $U$.
\end{enumerate}
Moreover, the tree-decomposition in \emph{(ii)} can be chosen with connected separators and such that it displays $\AbsC{U}$ which consists only of ends.
\end{mainresult}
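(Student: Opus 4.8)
The plan is to prove Theorem~\ref{StarRayDualtiy} by establishing the two implications separately, starting with the easier direction (ii)$\to\neg$(i) and then building the complementary tree-decomposition for $\neg$(i)$\to$(ii) from the locally finite normal tree supplied by Theorem~\ref{InitialStarNTduality}, much as Theorem~\ref{CombTreeDualityII} was derived from Theorem~\ref{CombTreeDualityI}. For the first implication, suppose $G$ has a locally finite tree-decomposition $(T,\cV)$ with finite, pairwise disjoint separators in which each part meets $U$ finitely, and assume for a contradiction that $G$ also contains a star \at $U$ with centre $c$. The centre $c$ lies in some part, and by Lemma~\ref{TDCseps} the infinitely many leaves of the star, which lie in pairwise distinct parts since each part meets $U$ finitely, must be separated from $c$ by adhesion sets along the tree. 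Because the separators are finite and pairwise disjoint and $T$ is locally finite, any vertex $c$ can reach only finitely many parts through any single finite separator; tracing the $c$--leaf paths through the decomposition tree and applying the pigeonhole principle to the (finitely many, at each node) separators emanating from the part containing $c$ should force infinitely many leaf-paths through a single finite separator, contradicting either local finiteness of $T$ or disjointness of the separators.

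For the implication $\neg$(i)$\to$(ii), I would invoke Theorem~\ref{InitialStarNTduality} in its full strength, including the `moreover' part, to obtain a locally finite normal tree $\nt\subset G$ that contains $U$ cofinally, all whose rays are undominated in $G$, and every component of whose complement $G-\nt$ has finite neighbourhood. The natural construction mirrors the proof of Theorem~\ref{CombTreeDualityII}: attach each component $C$ of $G-\nt$ as a new leaf joined to the maximal element $t_C$ of its (finite) neighbourhood chain, and set the parts so that each node $t\in\nt$ carries the down-closure $\dc{t}$ and each new leaf $C$ carries $\dc{t_C}\cup V(C)$. Local finiteness of the decomposition tree follows from local finiteness of $\nt$ together with the finite-neighbourhood property, which ensures only finitely many components attach at each node. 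The separators here are the adhesion sets $\dc{t}\cap\dc{t'}$ between adjacent parts, which are finite since $\nt$ is locally finite and rays in $\nt$ are undominated (so the down-closures $\dc{t}$ are finite).

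The main obstacle I anticipate is obtaining \emph{pairwise disjoint} separators, a requirement absent from the comb analogue (Theorem~\ref{CombTreeDualityII}) and explicitly shown impossible there by Example~\ref{CombStarDualNo}. The naive expansion above produces separators $\dc{t}\cap\dc{t'}=\dc{t}$ for $t<t'$ that nest rather than being disjoint, so a direct imitation fails---which is exactly the point flagged in the section's preamble that ``this theorem cannot be proved by imitating the proof of the respective theorem for combs.'' The remedy is to exploit that $\nt$ is locally finite with undominated rays, so that by Lemma~\ref{NormalTreeNormalRay} its normal rays correspond bijectively to $\Abs{\nt}$ and are genuine ends; one can then refine the tree-decomposition by using as adhesion sets not the full down-closures but rather consecutive \emph{level differences} along the tree, i.e.\ the edge-separators $\{N(C)\}$ or singleton-style separators that a locally finite tree naturally provides, so that distinct tree-edges contribute disjoint separators. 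Concretely, I expect to build the decomposition as an $\Sinf$-tree with \emph{upwards disjoint} separators in the sense of Section~\ref{subsec:TDCsDisplayDef}, then appeal to the fact quoted there that an $S$-tree with upwards disjoint separators induces a genuine tree-decomposition.

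For the `moreover' claims, the locally finite down-closure structure of $\nt$ together with undominatedness of all its rays should yield, via Lemma~\ref{NormalTreeNormalRay} and Lemma~\ref{NTcombinatorialClosureCofinal} (giving $\Abs{\nt}=\Abs{U}$), that the resulting decomposition displays $\Abs{U}$; and since $\nt$ is locally finite there are no infinite-degree nodes, so by Lemma~\ref{NormalTreeNormalCriticalVertexSet} no critical vertex set lies in the closure of $\nt\supset U$, whence $\AbsC{U}\subset\crit(G)$ is empty and $\AbsC{U}=\Abs{U}$ consists only of ends. Finally, connectedness of the separators can be arranged by the same standard argument used to make Carmesin's separators connected in Theorem~\ref{CarmesinsTDCforTopologicalEnds}, subdividing or enlarging each finite separator within its adhesion set to a connected set while preserving finiteness, local finiteness of $T$, and pairwise disjointness.
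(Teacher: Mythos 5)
Your argument for (ii)$\to\neg$(i) is essentially the paper's (Lemma~\ref{StarRayDuality_AtMostOne}): disjointness of the separators confines the centre $c$ to at most two parts, and the finitely many finite separators leaving the finite neighbourhood $T'$ of these nodes cut $c$ off from almost all leaves of the star. One slip: the final contradiction is not with local finiteness of $T$ or disjointness of the separators, but with the fact that infinitely many paths of the star, pairwise disjoint except at $c$, would all have to pass through a single finite vertex set avoiding $c$; this is easily repaired.

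The direction $\neg$(i)$\to$(ii) has a genuine gap, in two places. First, your expansion of the normal tree $\nt$ is not locally finite: the finite-neighbourhood property does \emph{not} ensure that only finitely many components of $G-\nt$ attach at a given node. Take $G$ to be a ray $R=r_0r_1r_2\ldots$ with infinitely many pendant vertices joined to $r_0$, and $U=V(R)$; then $G$ has no star \at $U$, and $\nt=R$ satisfies all hypotheses (locally finite, normal, contains $U$ cofinally, its unique ray undominated, every component of $G-\nt$ has neighbourhood $\{r_0\}$), yet your construction attaches infinitely many new leaves to $r_0$. Second, and more fundamentally, you correctly observe that the separators $\dc{t}$ arising from $\nt$ nest rather than being disjoint, but your proposed remedy (``level differences'', ``edge-separators'', building an $\Sinf$-tree with upwards disjoint separators) is a restatement of the goal, not a construction: \emph{every} separation induced by the normal tree structure has a separator of the form $\dc{t}$, and any two of these along a common normal ray share the root. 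To obtain disjoint separators one must abandon the separators supplied by $\nt$ and construct new ones pushed arbitrarily far out into the graph, and the existence of such separators is exactly what requires proof. This is where the paper's actual argument lives, and it is the idea missing from your proposal: from $\neg$(i), the space $\Abs{U}$ is non-empty, compact and consists only of undominated ends (Lemma~\ref{onlyUndomEnds}, via Lemma~\ref{KXgiveCompactClosure}); compactness plus undominatedness then allow one to strictly separate any finite vertex set from $\Abs{U}$ by a finite-order separation with connected separator (Lemma~\ref{ExhaustingSequenceGeneralised}); and iterating this yields, recursively, a locally finite $\Sinf$-tree with connected, pairwise disjoint separators displaying $\Abs{U}$ (Proposition~\ref{StreeConstruction}), from which the tree-decomposition and the finiteness of $U$ in each part follow. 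The paper explicitly forgoes the ``detour via normal trees'' for precisely this reason. A further, lesser issue: you invoke the `moreover' part of Theorem~\ref{InitialStarNTduality}, which is not proved in this paper but deferred to \cite{StarComb2TheDominatedComb}, so a self-contained proof should avoid it.
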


\noindent We remark that (ii) is equivalent to the assertion that `$G$ has a ray-decomposition with finite and pairwise disjoint separators such that each part contains at most finitely many vertices of $U$' since the distance classes of locally finite trees are finite.

%To see that a tree-decomposition as in (ii) is more specific than a comb, start with a ray in the decomposition tree (cf.~Lemma~\ref{sclForLocFinTrees}) and then inductively construct a comb in the connected parts along that ray.
%To see that a locally finite tree-decomposition $(T,\cV)$ as in (ii) is more specific than a comb \at $U$, consider the nodes of $T$ whose parts meet $U$ and apply Lemma~\ref{sclForRaylessTrees} in $T$ to find a comb $C\subset T$ \at them.
%Then inductively construct a comb in $G$ \at $U$ working inside the connected parts along~$C\subset T$.

To prove the theorem, we start by showing that (i) and (ii) exclude each other:

\begin{lemma}\label{StarRayDuality_AtMostOne}
In Theorem~\ref{StarRayDualtiy} the graph $G$ cannot satisfy both \emph{(i)} and \emph{(ii)}.
\end{lemma}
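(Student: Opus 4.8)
The plan is to show that if $G$ satisfies (ii), then $G$ cannot contain a star attached to $U$, by exploiting the two key features of the tree-decomposition in (ii): the separators are \emph{finite and pairwise disjoint}, and each part meets $U$ only finitely. First I would suppose for a contradiction that $G$ has both a tree-decomposition $(T,\cV)$ as in (ii) and a star $S\subset G$ attached to $U$ with centre $c$ and infinite attachment set $U'\subset U$ consisting of leaves of $S$. The centre $c$ lives in some part, so there is a node $t_0\in T$ with $c\in V_{t_0}$. The idea is that the infinitely many leaves in $U'$ cannot all cluster near $t_0$, since parts meet $U$ finitely; so they must be spread across infinitely many nodes of $T$, and the rays of the star from $c$ to these leaves must repeatedly cross separators of the decomposition tree.

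The key step is to derive a contradiction from the pairwise disjointness of the separators. Each leaf $u\in U'$ lies in some part $V_{t_u}$, and by passing to an infinite subset of $U'$ (possible because each part meets $U$ finitely) I may assume the nodes $t_u$ are pairwise distinct. For each such $u$, the $c$--$u$ path $P_u$ in $S$ is a connected subgraph of $G$; by Lemma~\ref{TDCseps}, as $P_u$ travels from $t_0$ to $t_u$ in $T$ it must pass through every separator $V_e$ on the path $t_0 T t_u$ in the decomposition tree, in particular through the first edge $e_u$ leaving $t_0$ towards $t_u$. Since $T$ is locally finite, $t_0$ has only finitely many incident edges, so by the pigeonhole principle infinitely many of the paths $P_u$ leave $t_0$ through the \emph{same} edge $e_0$, and hence all meet the single finite separator $V_{e_0}$. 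I would then push this argument further down the tree: infinitely many paths pass through $V_{e_0}$ into a fixed component of $T-e_0$, and the argument iterates.

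The main obstacle I expect is to convert this crossing information into a genuine contradiction, and this is exactly where the pairwise disjointness of the separators does the work. The cleanest route is to build a ray in $T$: starting from $t_0$, repeatedly use local finiteness and pigeonhole to select an edge through which infinitely many of the surviving star-paths pass, giving an edge sequence $e_0, e_1, e_2,\dots$ forming a ray $R$ in the decomposition tree. But $T$ is locally finite, so $R$ is an honest ray, and each $P_u$ in our nested infinite family meets all of the separators $V_{e_0},V_{e_1},\dots,V_{e_k}$ up to the point where it branches off towards $t_u$. Since a single path $P_u$ is finite it can only meet finitely many of these separators; on the other hand the separators $V_{e_i}$ are pairwise disjoint, so each vertex of $P_u$ lies in at most one of them, whence $P_u$ meets at most $|P_u|$ of the $V_{e_i}$. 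The contradiction comes from counting: infinitely many star-paths all emanate from the finite vertex $c$ and from the finite separators near $t_0$, yet the disjointness forces them to use distinct vertices in the successive separators, which is incompatible with their all meeting the same finite early separators $V_{e_i}$. Making the bookkeeping precise—choosing the nested infinite subfamilies and identifying exactly which finiteness (of a part, of a separator, or of local degree) is violated—is the delicate part, but the structural picture of a ray in $T$ through pairwise-disjoint finite separators crossed by infinitely many finite paths from a common centre is what forces the contradiction.
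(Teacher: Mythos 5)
Your setup is sound up to and including the first pigeonhole step: passing to infinitely many $u\in U'$ with pairwise distinct nodes $t_u$, and using local finiteness of $T$ at $t_0$ to find a single edge $e_0$ at $t_0$ whose separator $V_{e_0}$ (the intersection of the parts at its endpoints) is met by infinitely many of the star's paths $P_u$. From there, however, the proof as written has a genuine gap. The ray construction yields no contradiction on its own: from nested infinite families of paths meeting $V_{e_0},\dots,V_{e_k}$ you can conclude only (via the pairwise disjointness of the separators) that the surviving paths have at least $k+1$ vertices, and an infinite family of longer and longer finite paths is not contradictory; moreover, the intersection of the nested families may be empty, so no single $P_u$ need meet infinitely many separators. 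The counting you gesture at is the right instinct, but it needs an ingredient you never state: the paths of a star are \emph{internally disjoint}, i.e.\ any two of the $P_u$ meet exactly in the centre $c$. With that, a finite separator $V_{e_0}$ \emph{not containing} $c$ can meet at most $\vert V_{e_0}\vert$ of the paths $P_u$, since distinct paths must meet it in distinct vertices; so ``infinitely many $P_u$ meet $V_{e_0}$'' is already a contradiction. The one case this leaves open, which your write-up silently ignores, is $c\in V_{e_0}$: there all the paths may legitimately cross $V_{e_0}$ through the single vertex $c$. This is exactly where the pairwise disjointness of the separators must be invoked a second time: $c$ lies in at most one separator, so if $c\in V_{e_0}$, repeat the pigeonhole at the other endpoint $t_1$ of $e_0$ (the surviving $t_u$ are pairwise distinct, so at most one equals $t_1$, and each remaining $P_u$ must cross the separator of the first edge of $t_1Tt_u$, an edge at $t_1$ other than $e_0$) to obtain an edge $e_1\neq e_0$ whose separator $V_{e_1}$ is met by infinitely many $P_u$ and does not contain $c$; now the previous argument applies. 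With these two repairs your argument is complete, and no ray in $T$ is needed.

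For comparison, the paper reaches the same final principle without any pigeonhole: since the separators are pairwise disjoint, $c$ lies in at most two parts; those nodes together with their neighbours span a finite subtree $T'$ (by local finiteness), whose parts contain only finitely many vertices of $U$; hence infinitely many leaves of the star lie in parts at $T-T'$, while $c$ is separated from all of them by the finite union of the finite separators of the edges leaving $T'$, a set avoiding $c$ --- impossible for infinitely many paths that pairwise share only $c$. Your route trades this for two pigeonhole steps down the tree; both proofs ultimately rest on the same observation about internally disjoint paths crossing a finite separator that avoids the centre.
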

\begin{proof}
Let $(T,\cV)$ be a tree-decomposition as in (ii) of Theorem~\ref{StarRayDualtiy}.
Assume for a contradiction that $G$ contains a star $S$ \at $U$.
As the separators of $(T,\cV)$ are pairwise disjoint, the centre $c$ of $S$ is contained in at most two parts of $(T,\cV)$.
Let $T'\subset T$ be the finite subtree induced by the nodes of these parts plus their neighbours in $T$.
As the parts at the nodes of $T'$ altogether contain at most finitely many vertices from $U$, the star $S$ must send infinitely many paths to vertices in parts at $T-T'$.
But the centre $c$ is separated from the parts at $T-T'$ by the finite union of the finite separators associated with the edges of $T$ leaving $T'$, a contradiction.
\end{proof}

Now, to prove Theorem~\ref{StarRayDualtiy} it remains to show $\neg$(i)$\to$(ii).
This time, however, it is harder to see how the normal tree from Theorem~\ref{InitialStarNTduality} can be employed to yield a tree-decomposition. 
That is why we do not take the detour via normal trees and instead construct the tree-decomposition directly. 
Still, this requires some effort.

First of all, assuming the absence of a star as in (i), we need a strategy to construct a tree-decomposition as in (ii).
Fortunately, we do not have to start from scratch.
In the proof of \cite[Theorem~2.2]{Ends}, Diestel and Kühn proved the following as a technical key result:
\emph{If $\omega$ is an undominated end of $G$, then there exists a sequence $(X_n)_{n\in\N}$ of non-empty finite vertex sets $X_n\subset V(G)$ such that, for all $n\in\N$, the component $C(X_n,\omega)$ contains $X_{n+1}\cup C(X_{n+1},\omega)$.}
Now if $\Abs{U}$ is a singleton $\{\omega\}$, then $\omega$ must be undominated as (i) fails, and we consider such a sequence $(X_n)_{n\in\N}$.
By making all the $X_{n+1}$ connected in $C(X_n,\omega)$ first, and then moving to a suitable subsequence, we obtain a ray-decomposition of $G$ that meets the requirements of (ii).
Our strategy is to generalise this fundamental observation using that $\Abs{U}$ is compact in our situation:

\begin{lemma}\label{onlyUndomEnds}\label{UhatCompact}
If $G$ contains no star \at ~$U$, then $\Abs{U}$ is non-empty, compact and contains only undominated ends.
\end{lemma}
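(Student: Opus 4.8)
The plan is to derive the three conclusions from the two compactness tools already available. We may assume that $U$ is infinite. The non-emptiness and compactness of $\Abs{U}$ will follow straight from Lemma~\ref{KXgiveCompactClosure} once its hypothesis is verified, while the absence of dominated ends will be forced by a recursive construction that would otherwise manufacture a star \at $U$.

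First I would verify that for every $X\in\cX$ only finitely many components of $G-X$ meet $U$; this is exactly the hypothesis of Lemma~\ref{KXgiveCompactClosure}. If it failed for some $X$, then $X\neq\emptyset$ and we could pick infinitely many distinct components of $G-X$ together with a vertex of $U$ in each. As $G$ is connected, each of these components sends an edge to $X$, so by the pigeonhole principle infinitely many of them share a common neighbour $x\in X$; routing inside each such component a path from $x$ to its chosen vertex of $U$ then gives paths that meet pairwise only in $x$, i.e.\ a star \at $U$ with centre $x$---contradicting the hypothesis. With this established, Lemma~\ref{KXgiveCompactClosure} yields at once that $\Abs{U}$ is non-empty and compact.

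It remains to show that every end in $\Abs{U}$ is undominated, which I expect to be the crux. Suppose some $\omega\in\Abs{U}$ were dominated by a vertex $v$; I would build a star \at $U$ with centre $v$ recursively, keeping track of the finite set $F_k$ of vertices used so far, starting from $F_0=\{v\}$. Given $F_k$, I would use $\omega\in\Abs{U}$ to pick a vertex $u\in U$ in the component $C(F_k,\omega)$, so that in particular $u\notin F_k$; then set $X:=F_k\setminus\{v\}$ and observe that $v\in C(X,\omega)$ because $v$ dominates $\omega$, while $u\in C(X,\omega)$ as well since $C(F_k,\omega)\subseteq C(X,\omega)$. As $C(X,\omega)$ is connected it contains a $v$--$u$ path $S_{k+1}$, and this path meets $F_k$ only in $v$. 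Iterating with $F_{k+1}:=F_k\cup V(S_{k+1})$ produces infinitely many $v$--$U$ paths that pairwise meet only in $v$ and end in pairwise distinct vertices of $U$, i.e.\ a star \at $U$---the desired contradiction.

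The routine ingredients are the two appeals to connectivity and the pigeonhole argument; the one point that genuinely needs care, and the main obstacle, is keeping the paths of the star internally disjoint in the last step. This is precisely what the choice $X=F_k\setminus\{v\}$ secures: domination keeps $v$ inside the component $C(X,\omega)$, the closure property keeps a fresh vertex of $U$ there, and routing each new path through $C(X,\omega)$---which avoids every previously used vertex except $v$---makes disjointness automatic.
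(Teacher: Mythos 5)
Your proof is correct and, for the parts the paper actually argues, takes the same route: the pigeonhole argument showing that every finite $X$ leaves only finitely many components meeting $U$ (else a common neighbour $x\in X$ would be the centre of a star \at $U$), followed by an appeal to Lemma~\ref{KXgiveCompactClosure} for non-emptiness and compactness. The one difference is that the paper's proof stops there, treating the ``only undominated ends'' claim as immediate, whereas you supply an explicit argument for it; your recursive construction is sound --- the key point that a vertex $v$ dominating $\omega$ lies in $C(X,\omega)$ for every finite $X\not\ni v$ is a standard fact, and taking $X=F_k\setminus\{v\}$ does keep each new $v$--$U$ path internally disjoint from all earlier ones --- so your write-up in fact closes a small gap the paper leaves to the reader.
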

\begin{proof}
By the pigeonhole principle, for every $X\in\cX$ only finitely many components of $G-X$ may meet $U$.
Thus $\Abs{U}$ is non-empty and compact by Lemma~\ref{KXgiveCompactClosure}.
\end{proof}

Our next lemma generalises the fact that a vertex can be strictly separated from every end which it does not dominate.

\begin{lemma}\label{ExhaustingSequenceGeneralised}
Suppose that $X$ is a finite set of vertices in a (possibly disconnected) graph $G$ such that $G-X$ is connected, and that $\Psi\subset\Omega(G)$ is a non-empty and compact subspace consisting only of undominated ends.
Then there is a finite-order separation of $G$ that strictly separates $X$ from $\Psi$ and whose separator is connected.
\end{lemma}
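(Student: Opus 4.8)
The plan is to first produce a finite (possibly disconnected) separator that strictly separates $X$ from $\Psi$, and only then to enlarge it to a connected one inside $G-X$; the connectedness hypothesis on $G-X$ is used solely for the second step.

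\emph{Step 1 (a finite separator via compactness).} Fix $\omega\in\Psi$. Since $\omega$ is undominated, no vertex of the finite set $X$ dominates $\omega$, so by the observation in Section~\ref{section:preliminaries} that a finite vertex set none of whose vertices dominates $\omega$ can be strictly separated from $\omega$, there is a finite $Z_\omega\in\cX$ with $X$ avoiding $Z_\omega\cup C(Z_\omega,\omega)$. The set of ends living in $C(Z_\omega,\omega)$ is open in $\Omega$ and contains $\omega$, so as $\omega$ ranges over $\Psi$ these sets form an open cover of $\Psi$. Using compactness of $\Psi$, finitely many of them, say for $\omega_1,\dots,\omega_k$, already cover $\Psi$. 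Put $Z:=Z_{\omega_1}\cup\dots\cup Z_{\omega_k}$, a finite set disjoint from $X$. For every $\eta\in\Psi$, picking $i$ with $\eta$ living in $C(Z_{\omega_i},\omega_i)$ we get $C(Z,\eta)\subseteq C(Z_{\omega_i},\omega_i)$, and hence $C(Z,\eta)$ avoids $X$.

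\emph{Step 2 (making the separator connected).} Since $G-X$ is connected and $Z\subseteq V(G)\setminus X$ is finite, $G-X$ contains a finite connected subgraph $Y$ with $Z\subseteq V(Y)$; set $Z':=V(Y)$, a finite connected vertex set disjoint from $X$. Let $\cD'$ consist of those components of $G-Z'$ in which some end of $\Psi$ lives; the assignment $\eta\mapsto C(Z',\eta)$ is continuous from $\Psi$ into the discrete set $\cC_{Z'}$, so by compactness of $\Psi$ its image $\cD'$ is finite. I claim $\rsep{Z'}{\cD'}$ is as required. Its separator is $Z'$, which is connected. Each $D\in\cD'$ is connected and avoids $Z\subseteq Z'$, hence lies in a single component of $G-Z$; as $D$ contains an end of $\Psi$, that component is $C(Z,\eta)$ for some $\eta\in\Psi$ and thus avoids $X$ by Step~1. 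Since also $X\cap Z'=\emptyset$, we obtain $X\subseteq V\setminus(Z'\cup V[\cD'])$, the near side of $\rsep{Z'}{\cD'}$. Finally, every $\eta\in\Psi$ lives in $C(Z',\eta)\in\cD'$, which is a component of $G[V[\cD']]$, the far side; so $\rsep{Z'}{\cD'}$ strictly separates $X$ from $\Psi$.

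The only genuine obstacle is the connectedness of the separator, and the key point making Step~2 go through is that the connecting subgraph $Y$ may freely pass through the far region: enlarging the separator from $Z$ to $Z'$ merely replaces each far component $C(Z,\eta)$ by the smaller subcomponent $C(Z',\eta)$ in which $\eta$ now lives, and these still avoid $X$ precisely because $Y\subseteq G-X$ keeps $X$ out of the separator. (Degenerate cases are harmless: if $Z=\emptyset$, which can occur only when $X$ lies in components of $G$ disjoint from all of $\Psi$, one may take $Y$ empty and obtain the trivial separation, so we may assume $Z\neq\emptyset$.)
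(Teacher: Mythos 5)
Your proof is correct and takes essentially the same route as the paper's: strictly separate $X$ from each (undominated) end of $\Psi$, use compactness to reduce to a finite union of these separators, connect that union inside $G-X$, and let the far side consist of the components in which ends of $\Psi$ live. The only differences are bookkeeping—you verify that the components $C(Z,\eta)$ avoid $X$ before enlarging the separator, whereas the paper runs the analogous path argument after connecting—plus one cosmetic fix: in the degenerate case $Z=\emptyset$ you should take $Y$ to be a single vertex of the (non-empty, connected) graph $G-X$ rather than empty, so that the separator is genuinely connected.
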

\begin{proof}
No end in $\Psi$ is dominated and $X$ is finite, so for every end $\omega\in\Psi$ we find a finite vertex set $Y(\omega)\subset V(G)$ with $Y(\omega)\cup C(Y(\omega),\omega)$ disjoint from $X$.
Since the components $C(Y(\omega),\omega)$ induce a covering of $\Psi$ by open sets, the compactness of $\Psi$ yields finitely many ends $\omega_1,\ldots,\omega_n\in\Psi$ such that every end in $\Psi$ lives in at least one of the components $C(Y(\omega_i),\omega_i)$.
Let the vertex set $Y$ be obtained from the finite union of the finite sets $Y(\omega_i)$ by adding some finitely many vertices from the connected subgraph $G-X$ so as to ensure that $G[Y]$ is connected.
Note that $Y$ avoids $X$, and write $\cD$ for the collection of the components of $G-Y$ in which ends of $\Psi$ live.
We claim that $\rsep{Y}{\cD}$ strictly separates $X$ from $\Psi$.
For this, let $\omega$ be any end in $\Psi$.
Pick an index $k$ for which $\omega$ lives in the component $C(Y(\omega_k),\omega_k)=:C$.
Then, by the choice of $Y(\omega_k)$, there is no $X$--$C$ path in $G-Y(\omega_k)$.
By $Y(\omega_k)\subset Y$ and $C(Y,\omega)\subset C$ then there certainly is no $X$--$C(Y,\omega)$ path in $G-Y$.
Therefore, $\rsep{Y}{\cD}$ strictly separates $X$ from $\Psi$.
\end{proof}

\begin{proposition}\label{StreeConstruction}
Let $G$ be any connected graph and suppose that $\Psi\subset\Omega$ is a non-empty and compact subspace that consists only of undominated ends.
Then there exists a locally finite $\Sinf$-tree $(T,\alpha)$ with connected pairwise disjoint separators that displays $\Psi$.
\end{proposition}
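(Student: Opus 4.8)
The plan is to construct the $\Sinf$-tree $(T,\alpha)$ recursively, level by level, by iterating Lemma~\ref{ExhaustingSequenceGeneralised} along a tree that grows downwards from a root.
The root $r$ will carry the separation $(\emptyset,V)$, and at each newly created node we will split off the relevant portion of $\Psi$ using a connected separator furnished by Lemma~\ref{ExhaustingSequenceGeneralised}, taking care that the separators produced at distinct nodes are pairwise disjoint and that only finitely many children are created at each node (local finiteness).
First I would set up the recursion: to each node $t$ we associate a finite-order separation $(A_t,B_t)$ whose oriented ``lower'' side $B_t$ contains a compact subset $\Psi_t\subset\Psi$ of the ends that still ``live below'' $t$; we arrange that $G[B_t\setminus A_t]$ is connected (this is where the connectedness hypothesis of Lemma~\ref{ExhaustingSequenceGeneralised} is fed back in) and that $\Psi_t$ is the set of ends of $\Psi$ living in that component.

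Next I would carry out the splitting step. Given $t$ with its connected piece $H_t:=G[B_t\setminus A_t]$ and compact end-set $\Psi_t\neq\emptyset$, I apply Lemma~\ref{ExhaustingSequenceGeneralised} inside $H_t$ (with $X$ the previous connected separator $A_t\cap B_t$, relative to the connected graph $H_t$) to obtain a finite-order separation with a \emph{connected} separator $Y_t$ strictly separating $X$ from $\Psi_t$. The components of $H_t-Y_t$ in which ends of $\Psi_t$ live become the children of $t$; since $\Psi_t$ is compact and each such component is open in $\Psi_t$, only finitely many of these components can carry ends of $\Psi_t$, which gives local finiteness at $t$. For each such child $C$ we set $\Psi_C$ to be the (again compact) set of ends of $\Psi_t$ living in $C$, set $\alpha(t,C):=\rsep{Y_t}{\cD_C}$ for the appropriate collection $\cD_C$ of components, and recurse. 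Strict separation guarantees $Y_t\cap X=\emptyset$ and that $Y_t$ lies strictly below the previous separator, which is exactly what forces the separators at comparable nodes to be disjoint; for incomparable nodes disjointness follows because they sit in different components of a common $H_s-Y_s$.

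The main obstacle I expect is twofold. First, one must verify that this $(T,\alpha)$ actually \emph{displays} $\Psi$ in the sense of Section~\ref{subsec:TDCsDisplayDef}: every end of $\Psi$ must correspond to a unique end of $T$, and no end outside $\Psi$ may correspond to an end of $T$. The forward direction (each $\omega\in\Psi$ yields a ray of $T$, since strict separation makes the chosen separators exhaust $G$ around $\omega$) is routine, but showing the correspondence is a bijection requires a compactness/inverse-limit argument: as the separators grow, their ``lower sides'' shrink to the ends of $\Psi$ and to nothing else, using precisely that $\Psi$ is compact and that every end of $\Psi$ is undominated (so it can always be strictly separated from the current separator). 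One must also confirm that ends \emph{not} in $\Psi$ get trapped at some node rather than escaping to an end of $T$; here undominatedness of $\Psi$ together with strict separation ensures that a stray end eventually fails to be pushed further down. Second, one must ensure the recursion does not stall---i.e.\ that $\Psi_t\neq\emptyset$ is maintained and that the tree is genuinely $\Sinf$ with all the bookkeeping of $\alpha$ consistent (orientations compatible along edges, separators finite). I would address the termination/consistency bookkeeping by a straightforward transfinite/inductive check and reserve the real work for the display verification via the inverse-limit description $\Omega=\invLim\cC_X$ of Theorem~\ref{thm: directions}, mirroring the proof strategy of Lemma~\ref{DisplayLemma}.
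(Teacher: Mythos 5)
Your overall strategy---a recursive construction driven by Lemma~\ref{ExhaustingSequenceGeneralised}, compactness of $\Psi$ for local finiteness, and a display verification via Lemma~\ref{DisplayLemma} together with a finite-intersection-property argument---is the same as the paper's. But there is a genuine flaw in your splitting step. You apply Lemma~\ref{ExhaustingSequenceGeneralised} \emph{once per node} $t$, obtaining a single connected separator $Y_t$, and then assign to every edge from $t$ to a child $C$ a separation of the form $\rsep{Y_t}{\cD_C}$. Any such separation has separator exactly $Y_t$: since $\cD_C$ consists of components disjoint from $Y_t$, the two sides of $\rsep{Y_t}{\cD_C}$ intersect precisely in $Y_t$, whatever $\cD_C$ is. Hence all sibling edges at $t$ carry one and the same separator $Y_t$, and as soon as $\Psi_t$ splits over two or more components of $H_t-Y_t$---which happens in general---your $\Sinf$-tree fails to have \emph{pairwise} disjoint separators. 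Your remark that ``for incomparable nodes disjointness follows because they sit in different components'' is true of the sets $Y_t$ indexed by \emph{nodes}, but the separators of an $\Sinf$-tree are attached to \emph{edges}, and distinct sibling edges share $Y_t$. This is not cosmetic: pairwise disjointness (strictly stronger than the upwards disjointness your construction does achieve) is exactly what Theorem~\ref{StarRayDualtiy} takes from this proposition.

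The paper's construction avoids this by applying Lemma~\ref{ExhaustingSequenceGeneralised} \emph{once per child component} rather than once per node: if the separation at the current leaf is $\rsep{X}{\cC}$ and $C_1,\dots,C_n$ are the components in $\cC$ carrying ends of $\Psi$, the lemma is applied inside $G[X\cup C_i]$ for each $i$ separately, yielding a connected separator $A_i\cap B_i\subset C_i$; the separation of $G$ placed on the edge into the new node $C_i$ is $(A_i\cup(V\setminus C_i),B_i)$. Each child edge thus gets its own separator living inside its own component, so sibling separators are disjoint automatically, and strict separation keeps them disjoint from every separator above. Two smaller points: your root separation $(\emptyset,V)$ has an empty separator, which conflicts with the non-empty-separator hypothesis of Lemma~\ref{DisplayLemma} (the paper roots with $(\{v\},V)$ instead); and the reason ends outside $\Psi$ cannot correspond to ends of $T$ is not that they ``eventually fail to be pushed further down''---they may well live arbitrarily deep---but that Lemma~\ref{DisplayLemma} allows at most one end of $G$ per end of $T$, while the finite-intersection-property argument already supplies one lying in $\Psi$.
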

\begin{proof}
We inductively construct a sequence $\big((T_n,\alpha_n)\big)_{n\in\N}$ of rooted $\Sinf$-trees with root $r\in T_0\subset T_1\subset\cdots$ and $\alpha_0\subset\alpha_1\subset\cdots$, as follows.

To define $(T_0,\alpha_0)$, let $T_0$ consist of one edge $rt$ and put 
$\alpha_0(r,t):=(\{v\},V)$ for an arbitrary vertex $v$ of $G$.
Now, to obtain $(T_{n+1},\alpha_{n+1})$ from $(T_n,\alpha_n)$, we do the following for every edge $t\ell$ of $T_n$ at a leaf $\ell\neq r$.
Consider the separation $\alpha(t,\ell)=\rsep{X}{\cC}$ with $C_1,\ldots,C_n$ the finitely many components in $\cC$ in which ends of $\Psi$ live (these are finitely many as $\Psi$ is compact).
For each component $C_i$ apply Lemma~\ref{ExhaustingSequenceGeneralised} in $G[X\cup C_i]$ to $X$ and $\Psi\cap\Abs{C_i}$ to obtain a finite-order separation $(A_i,B_i)$ of $G[X\cup C_i]$ that strictly separates $X$ from $\Psi\cap\Abs{C_i}$ in $G[X\cup C_i]$ and has a connected separator $A_i\cap B_i$.
Then $(A_i',B_i')$ with $A_i':=A_i\cup (V\setminus C_i)$ and $B_i':=B_i$ is a finite-order separation of $G$ that strictly separates $X$ from $\Psi\cap\Abs{C_i}$ in~$G$ and has a connected separator $A_i'\cap B_i'=A_i\cap B_i$.
We add each $C_i$ as a new node to~$T_n$, join it precisely to the leaf $\ell$ and let $\alpha_{n+1}(\ell,C_i):=(A_i',B_i')$.
This completes the description of our construction.

We claim that the pair $(T,\alpha)$ given by $T:=\bigcup_n T_n$ and $\alpha:=\bigcup_n\alpha_n$ is as required.
Our construction ensures that $T$ is locally finite and that the separators of $(T,\alpha)$ are connected and pairwise disjoint.
Furthermore, our construction ensures that every end in $\Psi$ corresponds to an end of $T$.
It remains to show that $(T,\alpha)$ displays $\Psi$.
By Lemma~\ref{DisplayLemma} it suffices to show that, for every end of $T$, there is an end in $\Psi$ corresponding to it.
And indeed, every ray in $T$ avoiding the root is, literally, a descending sequence $C_1\supset C_2\supset\cdots$ of components for which some end of the compact $\Psi$ lives in all $C_n$ by the finite intersection property of the collection \mbox{$\{\,\Psi\cap\Abs{C_n}\mid n\in\N\,\}$}.
\end{proof}

\begin{proof}[Proof of Theorem~\ref{StarRayDualtiy}]
By Lemma~\ref{StarRayDuality_AtMostOne} at most one of (i) and (ii) can hold.
To establish that at least one of them holds, we show $\neg$(i)$\to$(ii).
Suppose that $G$ contains no star \at $U$.
By Lemma~\ref{onlyUndomEnds} we know that the subspace $\Abs{U}\subset\Omega$ consisting of the ends lying in the closure of $U$ actually contains only undominated ones, and is both non-empty and compact.
Proposition~\ref{StreeConstruction} then yields a locally finite $\Sinf$-tree $(T,\alpha)$ with connected pairwise disjoint separators that displays $\Abs{U}$. Let $(T,\cV)$ be the tree-decomposition corresponding to $(T,\alpha)$.
As $G$ contains no star \at $U$, there is no critical vertex set in the closure of $U$, and hence $(T,\cV)$ even displays~$\AbsC{U}$.
It remains to show that each part of $(T,\cV)$ contains at most finitely many vertices from $U$.
Suppose for a contradiction that some part $V_t$ contains some infinitely many vertices from~$U$, and write $U'$ for that subset of $U$.
As (i) fails, applying Lemma~\ref{UhatCompact} in $G$ to $U'$ yields an end in $\Abs{U'}$.
But then this end lies in $\Psi$ but does not correspond to an end of $T$, a contradiction.
\end{proof}

\bibliographystyle{amsplain}
\bibliography{StarCombBib}
\end{document}